\documentclass[reqno,12pt]{amsart} 
\usepackage{esint,mathrsfs,amsmath,amssymb,amsfonts}
\usepackage{verbatim}

\usepackage{color}
\usepackage[hyperfootnotes=false]{hyperref}
\hypersetup{
  colorlinks,
  citecolor=blue,
  linkcolor=red,
  urlcolor=blue}

\usepackage{enumitem}

\usepackage[left=1.2in, right=1.2in, bottom=1.5in]{geometry}
 
\newtheorem{theorem}{Theorem}[section]
\newtheorem{lemma}[theorem]{Lemma}

\theoremstyle{definition}

\theoremstyle{remark}

\numberwithin{equation}{section}

\usepackage{cite}

\begin{document}
\title[Stability of Anomalous Dissipation]{Stability of Anomalous Dissipation \protect\\ for the Forced 3D Navier--Stokes Equations \protect\\ under Geometric Perturbations}
\author{Changhong Li}
\address{Changhong Li: Academy of Mathematics and Systems Science, Chinese Academy of Sciences, Beijing, China.}
\email{lichanghong@amss.ac.cn}
\maketitle

\begin{abstract}

The energy dissipation in the inviscid limit is a central problem in turbulence theory. Kolmogorov's K41 theory predicts a positive dissipation rate independent of viscosity—a phenomenon known as anomalous dissipation. Bru\'e and De Lellis gave the first rigorous construction, but it relies on extremely precise geometric conditions. Based on quasi-self-similar mixing, we prove structural stability under pure normal perturbations of the central curves. We establish $C^2$ stability of the maps and $C^1$ stability of the local fields, and obtain H\"older estimates and high‑frequency energy concentration. A contradiction gives a positive dissipation lower bound independent of the perturbation, and embedding into the $(2+\frac12)$-dimensional framework shows $C^6$ structural stability. The main novelty is that the Bru\'e--De Lellis construction remains stable under such perturbations, so anomalous dissipation occurs in an open neighbourhood of function spaces, providing a rigorous foundation for K41 theory.
\end{abstract}

\section{Introduction}
\subsection{Background and motivation} 

\par
Turbulence is a highly complex and irregular flow state in fluid mechanics, which is characterised by energy cascade: energy injected at large scales is transferred stepwise to smaller scales via $(u\cdot\nabla u)$ eventually dissipated by viscosity at the Kolmogorov scale \cite{kolmogorov1941dissipation,kolmogorov1941local,kolmogorov1941degeneration,richardson1922weather}. Mathematically, turbulence can be reduced to the incompressible Navier-Stokes equations as the viscosity coefficient tends to zero.
Consider the 3D incompressible Navier--Stokes equations on the torus $\mathbb{T}^3$:
\begin{equation}
    \begin{cases}
\partial_t u^\mu + u^\mu \cdot \nabla u^\mu + \nabla p^\mu = \mu \Delta u^\mu + f^\mu, \\
\operatorname{div} u^\mu = 0,\\
u^\mu(\cdot,0) = u_0^\mu,
\end{cases}\qquad (x,t)\in\mathbb{T}^3\times[0,T],
\label{eq:NS}
\end{equation}
where $u^\mu$ is the velocity, $p^\mu$ the pressure, $f^\mu$ the forcing, and $u_0^\mu$ the initial datum. 
From the energy equality
\[
\frac{\rm d}{\rm{d} t}\frac{1}{2}\|u^\mu\|_{L^2}^2 = -\mu\|\nabla u^\mu\|_{L^2}^2 + \int_{\mathbb{T}^3} f^\mu \cdot u^\mu \, \mathrm{d}x,
\]
the viscous energy dissipation rate per unit mass is determined by $\mu\|\nabla u^\mu\|_{L^2}^2$. 
In 1941, Kolmogorov's K41 theory \cite{kolmogorov1941dissipation,kolmogorov1941local,kolmogorov1941degeneration} predicted that at high Reynolds numbers, even if $f^\mu$ and $u_0^\mu$ are uniformly bounded, the dissipation rate tends to a positive constant independent of viscosity:
\begin{equation}
    \liminf_{\mu\to0}\mu\int_0^T\|\nabla u^\mu\|_{L^2}^2\,dt>0.
    \label{eq:AD}
\end{equation}
This phenomenon is called \emph{anomalous dissipation}.
\par
Anomalous dissipation requires the corresponding Euler solution to lose Lipschitz regularity \cite{onsager1949statistical,diperna1987oscillations}; otherwise dissipation vanishes \cite{brue2023anomalous}. Hence one must construct $\{u^\mu\}$ not converging to a Lipschitz Euler solution while satisfying \eqref{eq:AD}. Experiments and numerical simulations confirm anomalous dissipation \cite{sreenivasan1998update,kaneda2003energy}, but constructing such NS solutions remains a major challenge, requiring a passive scalar mixing and a suitably viscosity coefficient. Consider the transport equation
\begin{equation}
    \partial_t\rho+v\cdot\nabla\rho=0,
    \label{eq:T}
\end{equation}
where $v$ is a 2D divergence-free velocity field and $\rho$ a passively transported scalar.
\par
Alberti, Crippa and Mazzucato \cite{alberti2019exponential} constructed a quasi-self-similar family $\{(\rho_n,v_n)\}_{n\in\mathbb{N}}$ achieving expontenial mixing \cite{alberti2014exponential,alberti2018loss}. Drivas et al. \cite{drivas2022anomalous} proved anomalous dissipation for passive scalars, Jeong and Yoneda \cite{jeong2021vortex,jeong2022quasi} symmetrically introduced the $(2+\frac12)$-dimensional framework, reducing 3D Navier--Stokes to a coupled 2D system:
\begin{equation}
    \begin{cases}
        \partial_{t}v^{\mu}+v^{\mu}\cdot\nabla v^{\mu}+\nabla q^{\mu}=\mu\Delta v^{\mu}+g^\mu,\\
        \operatorname{div}v^{\mu}=0,\\
        \partial_{t}\theta^{\mu}+v^{\mu}\cdot\nabla \theta^{\mu}=\mu\Delta \theta^{\mu},     
    \end{cases}
    \label{nsmu}
\end{equation}
where $v^\mu$ and $\theta^\mu$ depend only on $(x_1,x_2)$, avoiding 3D vortex stretching.
\par
In 2023, Bru\'e and De Lellis \cite{brue2023anomalous} embedded the quasi-self-similar structure of Alberti et al. \cite{alberti2019exponential} into this framework, proved that a family of forced 3D Navier--Stokes equations exhibits anomalous dissipation. However, the construction relies on precise geometric conditions \cite{alberti2019exponential}; their violation destroys the dissipation lower bound. This raises the question of structural stability under small perturbations. The present stability analysis shows that anomalous dissipation can be realised in an open neighbourhood of function spaces, identifies key invariants (area, arclength, zero mean), provides analytical tools, bridges theory with experiments under $C^6$ perturbations, and offers a framework for other singular limits.

\subsection{Assumptions}
Let the original central curves $\{\Gamma_i\}$ be parametrised by $\gamma_i(t,s)$ with $|\partial_s\gamma_i|=l(t)$, arclength $d\sigma=l(t)ds$. Define unit tangent $\tau_i=\partial_s\gamma_i/|\partial_s\gamma_i|$, unit normal $\eta_i=\tau_i^\perp$, and curvature $\kappa_i$ by $\partial_s\tau_i=-l\kappa_i\eta_i$.
Since a small deformation of a smooth curve is determined by its normal component (tangential changes merely reparametrise) \cite{deckelnick2005computation,giga2006surface},  we consider a pure normal perturbation
\begin{equation}
    \tilde{\gamma}_i(t,s) = \gamma_i(t,s) + h_i(t,s)\eta_i(t,s),
\end{equation}
with $h_i(0,s)=0$, boundary conditions $\partial_t^k\partial_s^\varsigma h_i(t,0)=\partial_t^k\partial_s^\varsigma h_i(t,L)=0$ ($\varsigma\le6$, $k\ge0$), and $\epsilon=\|h_i\|_{C^\infty([0,1];C^6([0,L]))}$ sufficiently small.

\subsection{Main results}

Let $\{(\rho_n, v_n)\}_{n\in\mathbb{N}}$ be the quasi-self-similar family of Alberti, Crippa and Mazzucato \cite{alberti2019exponential} with central curves $\Gamma_i(t)$ ($i=1,\dots,6$) satisfying $|\partial_s\gamma_i|=l(t)$. Apply a pure normal perturbation
\begin{equation}
    \tilde{\gamma}_i(t,s) = \gamma_i(t,s) + h_i(t,s)\,\eta_i(t,s),
\end{equation}
where $h_i:[0,1]\times[0,L]\to\mathbb{R}$ satisfies the following conditions:
\begin{enumerate}[label=(\roman*)]
    \item Regularity and smallness: $h_i \in C^\infty([0,1]; C^6([0,L]))$, and there exists $\epsilon_0>0$ depending only on the original geometry such that $\|h_i\|_{C^\infty([0,1]; C^6([0,L]))} = \epsilon \le \epsilon_0$;
    \item Boundary conditions: $\partial_t^k\partial_s^\varsigma h_i(t,0)=\partial_t^k\partial_s^\varsigma h_i(t,L)=0$ for all $t\in[0,1]$, $\varsigma=0,\dots,6$, $k\ge0$;
    \item Initial condition: $h_i(0,s)=0$ for all $s\in[0,L]$;
    \item Area-preserving condition: $\int_{\Gamma_i(t)} h_i\,d\sigma + \frac12\int_{\Gamma_i(t)} h_i^2\kappa_i\,d\sigma = 0$ for all $t\in[0,1]$;
    \item Equal-length condition: $\tilde L_i(t)=\tilde L_{i'}(t)$ for all $t\in[0,1]$ and $i,i'=1,\dots,6$, where $\tilde L_i(t)$ is defined by \begin{equation}
    \tilde{L}_i(t) := \int_0^L \sqrt{l(t)^2\bigl(1 - \kappa_i(t,s)h_i(t,s)\bigr)^2 + (\partial_s h_i(t,s))^2}\, ds.
    \label{6}
\end{equation}
\end{enumerate}
Conditions (i)–(iv) are necessary for a divergence-free velocity field; (v) is a constructive condition. Theorem \ref{1.4} is the main result. All constants $C$ depend only on the original geometry (e.g., $ l(t)$, $\|\kappa_i\|$).

\noindent
\begin{lemma}\label{1.1}
\textbf{(Existence and stability of the perturbed basic family)}
\par
There exist constants $\epsilon_0>0$ and $C>0$, depending only on the original geometry, such that for $\epsilon\le\epsilon_0$, the perturbed local velocity field and scalar field $(\tilde V_i,\tilde\Theta_i)$ constructed from the perturbed curves $\tilde\Gamma_i$ satisfy on $[0,1]^2\times[0,1]$ the transport equation
\begin{equation}
    \partial_t\tilde\Theta_i+\tilde V_i\cdot\nabla\tilde\Theta_i=0,
    \label{1.2-1}
\end{equation}
and enjoy the following properties:
\par
(1) Regularity of the velocity field : $\tilde V_i\in C^\infty([0,1];C^3([0,1]^2;\mathbb{R}^2))$, $\operatorname{div}\tilde V_i=0$, and $\tilde V_i$ is tangent to the boundary $\partial[0,1]^2$;
\par
(2) Zero mean and unit norm of the scalar field: $\tilde\Theta_i\in C^\infty([0,1];C^3([0,1]^2))$ satisfies
\begin{align}
    \int_{(0,1)^2}\tilde\Theta_i(x,t)\,\mathrm dx&=0,\\
    \int_{(0,1)^2}\tilde\Theta_i(x,t)^2\,\mathrm dx&=1;
    \label{1.2-2}
\end{align}
\par
(3) Stability:
\begin{equation}
    \|\tilde V_i - V_i\|_{C^1([0,1]^2;\mathbb{R}^2)} \le C\epsilon,\qquad 
    \|\tilde\Theta_i - \Theta_i\|_{C^1([0,1]^2)} \le C\epsilon;
    \label{1.2-3}
\end{equation}
\par
(4) Quasi‑self‑similar patching condition: for every $Q\in\mathcal Q(5)$ there exists $j=j(Q,i)\in\{1,\dots,N\}$ such that
\begin{equation}
    \tilde\Theta_i(x,1)=\tilde\Theta_j\bigl(5(x-r_Q),0\bigr),\quad x\in Q.
    \label{1.2-4}
\end{equation}
\end{lemma}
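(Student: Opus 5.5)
The plan is to follow the Alberti--Crippa--Mazzucato construction step by step, with the perturbed curves $\tilde\Gamma_i$ in place of $\Gamma_i$, and to show that each step depends smoothly on $h_i$.

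\textbf{Step 1: a volume-preserving diffeomorphism.} I will build a map $\Phi_i(t,\cdot)$ of $[0,1]^2$ onto itself with three properties:
\begin{itemize}
\item it is the identity near $\partial[0,1]^2$ and near the endpoints of the curves;
\item it carries a tubular neighbourhood of $\Gamma_i(t)$ onto a tubular neighbourhood of $\tilde\Gamma_i(t)$, sending $\gamma_i(t,s)+r\eta_i(t,s)$ to approximately $\tilde\gamma_i(t,s)+r\tilde\eta_i(t,s)$;
\item $\Phi_i(0,\cdot)=\mathrm{id}$.
\end{itemize}
The naive normal-shift map has Jacobian $1+O(\epsilon)$. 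I will correct it to be exactly area-preserving with a Dacorogna--Moser argument: solve $\operatorname{div} w=1-\det D\Phi$ with $w$ vanishing on the boundary. The solvability condition is zero total mass. On each tube this reduces exactly to condition (iv), since $\int h\,d\sigma+\tfrac12\int h^2\kappa\,d\sigma$ is the signed area between $\Gamma_i$ and $\tilde\Gamma_i$.

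Because $h_i\in C^\infty_t C^6_s$, the map $\Phi_i-\mathrm{id}$ is $O(\epsilon)$ in $C^\infty_t C^4_x$, up to a loss of derivatives from the normal field $\eta_i$ and from Dacorogna--Moser in Hölder classes. This is the $C^2$ stability of the maps mentioned in the abstract.

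\textbf{Step 2: the perturbed fields.} Set
\[
\tilde\Theta_i(x,t)=\Theta_i(\Phi_i^{-1}(t,x),t),\qquad
\tilde V_i=\partial_t\Phi_i\circ\Phi_i^{-1}+(D\Phi_i\,V_i)\circ\Phi_i^{-1}.
\]
Equivalently, $\tilde V_i$ is the pushforward of the space-time vector field $(1,V_i)$. The transport equation \eqref{1.2-1} then follows by the chain rule.

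Divergence-freeness comes from two facts: $\det D\Phi_i\equiv1$ is preserved along the flow, and the pushforward of a divergence-free field by a volume-preserving map is divergence-free. A cleaner route is to transport the stream function instead: $\tilde\psi_i=\psi_i\circ\Phi_i^{-1}+\psi^{\rm corr}$, where $\psi^{\rm corr}$ generates $\partial_t\Phi_i\circ\Phi_i^{-1}$. That velocity is divergence-free because $\Phi_i$ is area-preserving at every time. Tangency to $\partial[0,1]^2$ holds because $\Phi_i=\mathrm{id}$ near the boundary.

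Regularity: $\tilde V_i\in C^\infty_tC^3_x$ loses one derivative from $D\Phi_i$, so $\Phi_i\in C^4_x$ suffices.

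\textbf{Step 3: properties (2) and (3).} The zero mean and the $L^2$ norm in property (2) are preserved under composition with an area-preserving map, so they hold with the same values as for $\Theta_i$.

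For the stability estimate (3), write
\[
\tilde\Theta_i-\Theta_i=\Theta_i\circ\Phi_i^{-1}-\Theta_i .
\]
Its $C^1$ norm is at most $C\|\Theta_i\|_{C^2}\,\|\Phi_i^{-1}-\mathrm{id}\|_{C^1}\le C\epsilon$. The same argument applies to $\tilde V_i-V_i$, adding the contribution of $\partial_t\Phi_i=O(\epsilon)$.

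\textbf{Step 4: the patching condition (4).} This is where I expect the main obstacle. At $t=1$ the ACM data $\Theta_i(\cdot,1)$ on each subsquare $Q\in\mathcal Q(5)$ is a rescaled copy of some $\Theta_j(\cdot,0)$. We need $\tilde\Theta_i(\cdot,1)=\tilde\Theta_j(5(\cdot-r_Q),0)=\Theta_j(5(\cdot-r_Q),0)$, using $h_j(0,\cdot)=0$ and $\Phi_j(0)=\mathrm{id}$. So $\Phi_i(1,\cdot)$ must act as the identity on the support pattern at time one, or at least restrict on each $Q$ to a map preserving the rescaled $\Theta_j(\cdot,0)$.

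I will try two options, in this order:
\begin{itemize}
\item require $h_i(1,\cdot)$ to give a curve whose rescaled pieces coincide with the original time-zero curves;
\item if that fails, modify $\Phi_i$ on a short final time layer $[1-\delta,1]$ to return to the identity through a volume-preserving isotopy of size $O(\epsilon)$.
\end{itemize}
The equal-length condition (v) is the constraint that keeps the arclength parametrisations of all six perturbed curves synchronized. This should make the reconnection consistent across the indices $j(Q,i)$ and ensure that one perturbation pattern is inherited at every scale.
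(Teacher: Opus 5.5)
Your Steps 1--3 are sound, but they take a genuinely different route from the paper.

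\textbf{How the paper argues (1)--(3).} The paper builds new tube coordinates around $\tilde\Gamma_i$, namely $\tilde\Phi_i=\tilde\gamma_i+\tilde\beta_i\tilde\eta_i$ with $\tilde\beta_i-\frac{\tilde\kappa_i}{2}\tilde\beta_i^2=y/\tilde l$. It proves $C^2$ closeness to $\Phi_i$ using the geometric estimates of Lemma~\ref{2.6} and a contraction argument for $\tilde\beta_i-\beta_i$ (Lemma~\ref{2.7}), and transfers this to the inverse maps (Lemma~\ref{2.8}). It then compares $\tilde V_i=\partial_t\tilde\Phi_i\circ\tilde\Phi_i^{-1}$ and $\tilde\Theta_i=\bar\Theta(\tilde Y_i/r)$ with the originals by a case analysis over $U_i\cap\tilde U_i$, $U_i\setminus\tilde U_i$ and $\tilde U_i\setminus U_i$.

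\textbf{What your conjugation buys and costs.} You conjugate the unperturbed solution by a single area-preserving diffeomorphism $\Phi_i(t)$ of the square. This makes the transport equation, $\operatorname{div}\tilde V_i=0$, the zero mean and the unit $L^2$ norm automatic, and it reduces (3) to a composition estimate. It also avoids the tube-boundary cases altogether; the paper's Case~2 tacitly needs $V_i$ to vanish on $\partial U_i$, which the velocity of a moving tube need not do. The price is a Dacorogna--Moser step with $C^\infty_tC^4_x$ control. That step must be arranged so the correction preserves $\tilde\Gamma_i$: take the correcting field tangent to $\tilde\Gamma_i$. This is possible precisely because (iv) gives zero net density defect on each side of $\tilde\Gamma_i$.

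One small fix is needed. Since $h_i$ vanishes only to sixth order at $s=0,L$, $\Phi_i$ cannot be the identity on a neighbourhood of the curve endpoints. It suffices that $\Phi_i(t)$ maps each side of $\partial[0,1]^2$ into itself, and this still gives tangency of $\tilde V_i$.

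\textbf{The genuine gap is Step 4.} Neither of your options proves (4) for the statement as posed. By (iii), $\tilde\Theta_j(\cdot,0)=\Theta_j(\cdot,0)$ for every $j$. So (4) demands that $\tilde\Theta_i(\cdot,1)$ coincide on each $Q\in\mathcal Q(5)$ with one of finitely many \emph{unperturbed} rescaled tiles. Since $\tilde\Theta_i(\cdot,1)$ is $C\epsilon$-close in $C^1$ to $\Theta_i(\cdot,1)$, and distinct tiles are a positive distance apart, for small $\epsilon$ the only candidate is the original tile. Hence (4) amounts to $\tilde\Theta_i(\cdot,1)=\Theta_i(\cdot,1)$: the perturbation must be invisible at $t=1$, which in effect means $h_i(1,\cdot)\equiv0$.

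Nothing in (i)--(v) gives this. In particular, (v) constrains total lengths at each fixed time and says nothing about the time-one configuration, so it cannot make the reconnection consistent. Your first option is exactly this extra hypothesis. Your second option, an isotopy back to the identity on $[1-\delta,1]$, proves the lemma for a different perturbation that vanishes at $t=1$; its fields near $t=1$ are no longer built from $\tilde\Gamma_i(t)$.

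The paper does not close this either: its proof of Lemma~\ref{1.1} simply restates the ACM patching identity for the perturbed fields, with no argument. The honest repair is to add $h_i(1,\cdot)\equiv0$ to the hypotheses. This is compatible with (iv) and (v): run the implicit-function constructions of Lemmas~\ref{2.4}--\ref{2.5} with free data vanishing at $t=1$. With that hypothesis your construction gives $\Phi_i(1)=\mathrm{id}$, and (4) follows in one line from the unperturbed identity.
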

\par

\noindent
\begin{theorem}\label{1.2}
\textbf{(Estimates of the perturbed quasi‑self‑similar family)}
\par
Under the assumptions of Lemma \ref{1.1}, using the quasi‑self‑similar construction and the Peano curve patching from \cite{alberti2019exponential}, we can construct global velocity fields $\tilde v_n$ and scalar fields $\tilde\rho_n$ ($n\in\mathbb{N}$) defined on $[0,1]^2\times[0,1]$ such that:
\par
(1) Velocity estimates: $\tilde v_n\in C^\infty([0,1];C^3([0,1]^2;\mathbb{R}^2))$, $\operatorname{div}\tilde v_n=0$, and for every $\alpha\in(0,1)$ and $k\in\mathbb{N}$,
\begin{equation}
    \|\partial_t^k\tilde v_n\|_{L^\infty([0,1];\,C^\alpha(\mathbb{T}^2))}\le C(\alpha,k)5^{(\alpha-1)n}+O(\epsilon)5^{(\alpha-1)n};
    \label{1.3-1}
\end{equation}
\par
(2) Scalar field estimates: $\tilde\rho_n\in C^\infty([0,1];C^3([0,1]^2))$, and for every $t\in[0,1]$, $n\in\mathbb{N}$,
\begin{align}
    \int_{(0,1)^2}\tilde\rho_n(x,t)\,\mathrm dx&=0,\\
    \int_{(0,1)^2}\tilde\rho_n(x,t)^2\,\mathrm dx&=1,
    \label{1.3-2}
\end{align}
and there exists a constant $C>0$ depending only on the original geometry such that for all $\epsilon\le\epsilon_0$,
\begin{align}
   \|\tilde\rho_n\|_{L^\infty} &\le C+O(\epsilon),\\
   \|\nabla\tilde\rho_n\|_{L^\infty} &\le C5^n+O(\epsilon)5^{n};
   \label{1.3-3}
\end{align}
\par
(3) Compact support: there exists a compact set $K\subset[0,1]^2$ such that for all $t\in[0,1]$ and $n\in\mathbb{N}$,
\begin{equation}
    \operatorname{supp}\tilde v_n(\cdot,t)\cup\operatorname{supp}\tilde\rho_n(\cdot,t)\subset K;
    \label{1.3-4}
\end{equation}
\par
(4) Recurrence relation:
\begin{equation}
    \tilde\rho_n(x,1) = \tilde\rho_{n+1}(x,0),\quad \forall n\in\mathbb{N}.
    \label{1.3-5}
\end{equation}
\end{theorem}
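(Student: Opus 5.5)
The plan is to follow the Alberti--Crippa--Mazzucato construction line by line, with the perturbed basic family $(\tilde V_i,\tilde\Theta_i)$ of Lemma \ref{1.1} in place of the original one. We define $\tilde\rho_n$ and $\tilde v_n$ recursively on the dyadic-like grid $\mathcal Q(5^n)$ of squares of side $5^{-n}$. On each square $Q\in\mathcal Q(5^n)$ we set
\[
\tilde\rho_n(x,t)=\tilde\Theta_{i(Q)}\bigl(5^n(x-r_Q),t\bigr),\qquad
\tilde v_n(x,t)=5^{-n}\tilde V_{i(Q)}\bigl(5^n(x-r_Q),t\bigr),
\]
where the labels $i(Q)$ are determined inductively from the patching map $j(Q,i)$ in \eqref{1.2-4}. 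We start from a fixed label on the unit square at $n=0$, or after the Peano-curve reordering of \cite{alberti2019exponential}, which fixes the global ordering and the compact support $K$.

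\textbf{Exact structural properties.} Items (2), (3) and (4) are checked first, because they are exact identities rather than estimates.

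1. \emph{Transport and divergence.} Each rescaled piece solves the transport equation \eqref{1.2-1}, since the rescaling $x\mapsto 5^n(x-r_Q)$ with velocity factor $5^{-n}$ leaves time unchanged. Divergence-freeness is preserved under this rescaling.

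2. \emph{Gluing.} Because $\tilde V_i$ is tangent to $\partial[0,1]^2$ (Lemma \ref{1.1}(1)), the patched field has no normal jump across square boundaries. To get $C^3$ across the interfaces, I would use that the boundary conditions (ii) force $\tilde V_i$ to coincide with $V_i$ near $\partial[0,1]^2$. The original fields vanish to high order there, or are identical across neighbours, so the gluing regularity is inherited from \cite{alberti2019exponential}.

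3. \emph{Mean and norm.} Zero mean and unit $L^2$ norm in \eqref{1.3-2} follow from Lemma \ref{1.1}(2) by the change of variables, since $|Q|=5^{-2n}$ and there are $5^{2n}$ squares.

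4. \emph{Recurrence.} The recurrence \eqref{1.3-5} is exactly \eqref{1.2-4} applied square by square. For $x\in Q'\in\mathcal Q(5^{n+1})$ with $Q'\subset Q\in\mathcal Q(5^n)$, we have $\tilde\rho_n(x,1)=\tilde\Theta_{i(Q)}(5^n(x-r_Q),1)=\tilde\Theta_{j}(5^{n+1}(x-r_{Q'}),0)=\tilde\rho_{n+1}(x,0)$. This is consistent precisely because the labels were defined by $j$.

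5. \emph{Support.} Compact support in a fixed $K$ comes from the Peano patching and the boundary vanishing. The perturbation touches only interior curves with $h_i$ flat at the endpoints, so it does not enlarge the supports.

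\textbf{Quantitative bounds.} These follow by scaling plus the $C^1$ stability \eqref{1.2-3}.

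1. \emph{Scalar bounds.} We have $\|\tilde\rho_n\|_{L^\infty}\le\max_i\|\tilde\Theta_i\|_{L^\infty}\le\max_i\|\Theta_i\|_{L^\infty}+C\epsilon$. Similarly $\|\nabla\tilde\rho_n\|_{L^\infty}=5^n\max_i\|\nabla\tilde\Theta_i\|_{L^\infty}\le 5^n(C+C\epsilon)$. This gives \eqref{1.3-3}.

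2. \emph{Velocity H\"older bound.} For \eqref{1.3-1}, the sup norm of $\tilde v_n$ is $O(5^{-n})$ and its Lipschitz constant is $O(1)$. Interpolating within a single square gives $[\tilde v_n]_{C^\alpha}\lesssim 5^{-n}\cdot5^{n\alpha}\|\tilde V_i\|_{C^1}$. For points in different squares at distance $d\ge 5^{-n}$ we instead bound by $2\|\tilde v_n\|_\infty d^{-\alpha}\le C5^{-n}5^{n\alpha}$. Writing $\|\tilde V_i\|_{C^1}\le\|V_i\|_{C^1}+C\epsilon$ separates the $O(\epsilon)5^{(\alpha-1)n}$ term.

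3. \emph{Time derivatives.} Here we need $\|\partial_t^k\tilde V_i\|_{C^1}$ bounds uniform in $\epsilon$. Lemma \ref{1.1}(3) is stated only at fixed time, so I would re-derive it for $\partial_t^k$. This uses that $h_i\in C^\infty_t C^6_s$ with norm $\epsilon$, and that the construction of $\tilde V_i$ from $\tilde\gamma_i$ depends smoothly on $h_i$. Hence $\partial_t^k(\tilde V_i-V_i)$ is $O_k(\epsilon)$ in $C^1$.

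\textbf{Main obstacle.} I expect the hardest step to be the global regularity and patching: verifying that the glued $\tilde v_n$ is $C^3$ in space and that consecutive time intervals match smoothly in time. In the original construction this relies on the fields being time-independent or flat near $t=0,1$ and on exact coincidence at square boundaries. With the perturbation, I would enforce this by requiring $h_i$ to be flat in $t$ at $t=0$ and $t=1$, which is compatible with (iii), and by using (ii) to ensure the perturbed cell fields agree with the unperturbed ones in a neighbourhood of the cell boundary. The smoothness in time and space across interfaces is then literally that of \cite{alberti2019exponential}.
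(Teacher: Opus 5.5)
Your route is the paper's: tile the perturbed basic family $(\tilde V_i,\tilde\Theta_i)$ over a grid, rescale the velocity by the side length, and read off zero mean, unit $L^2$ norm and the recurrence from the change of variables and the patching rule \eqref{1.2-4}. The bounds \eqref{1.3-1} and \eqref{1.3-3} then come from scaling plus \eqref{1.2-3}. Using the grid $\mathcal Q(5^n)$ instead of $\mathcal Q(2\cdot 5^n)$ only changes constants. In three places you are more careful than the paper:
\begin{itemize}
\item Your recurrence computation uses the corner $r_{Q'}$ of the fine square, as it must.
\item You note that \eqref{1.2-3} is a fixed-time bound, so it cannot control $\partial_t^k\tilde V_i$ for $k\ge1$. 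That needs the analogue of Lemma~\ref{2.9} for all time derivatives, which is available since $h_i\in C^\infty([0,1];C^6([0,L]))$. The paper's Step~1 uses \eqref{1.2-3} for this without comment.
\item You treat pairs of points in different squares in the H\"older seminorm. There, however, split by distance rather than by square. Adjacent squares contain points at distance $<5^{-n}$, and for those you need the global Lipschitz bound $\max_i\|\nabla\tilde V_i\|_{L^\infty}\le C+C\epsilon$ of the glued field.
\end{itemize}

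The step that fails as written is the spatial gluing. Hypothesis (ii) does not force $\tilde V_i=V_i$ on a neighbourhood of $\partial[0,1]^2$. It only says $h_i$ vanishes to order $6$ in $s$ at the two endpoints $s=0,L$, and $h_i$ is in general nonzero arbitrarily close to them. What (ii) does give is finite-order contact. $\tilde\Phi_i$ depends pointwise only on $h_i,\partial_sh_i,\partial_s^2h_i$, through $\tilde l$, $\tilde\eta_i$, $\tilde\kappa_i$ and $\tilde\beta_i$. Hence in the $(s,y)$ variables, $\tilde\Phi_i-\Phi_i$ and $\partial_t(\tilde\Phi_i-\Phi_i)$ vanish with all derivatives of order $\le4$ on $\{s=0\}\cup\{s=L\}$. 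Then $\tilde V_i-V_i$ vanishes to order at least $3$ on the images of these segments. That order count, not local coincidence, is what a $C^3$ gluing argument has to rest on; it is also what the paper's remark after \eqref{3-2} implicitly uses. Build on it rather than strengthening (ii).

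Your extra requirement that $h_i$ be flat in $t$ at $t=0,1$ is unnecessary. Theorem~\ref{1.2} concerns each $\tilde v_n$ on $[0,1]$ separately, and $C^\infty$ regularity in time follows directly from $h_i\in C^\infty([0,1];C^6([0,L]))$. Smoothness across junctions between consecutive $n$ comes only later, from the reparametrisation $\eta$ with $\eta^{(k)}(t_n)=0$.

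Finally, for (3), the perturbation does move supports, by up to $C\epsilon$ (Lemma~\ref{2.10}), so ``does not enlarge the supports'' is inaccurate. The paper instead takes $K$ to be a closed $\delta$-neighbourhood of the unperturbed support and shrinks $\epsilon_0$ accordingly.
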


\par
From the above quasi‑self‑similar construction we obtain a frequency concentration estimate for the global scalar field and prove the stability of the two‑dimensional anomalous dissipation lower bound, i.e., Theorem \ref{1.3}.
\par
\noindent
\begin{theorem}\label{1.3}
\textbf{(Stability of the anomalous dissipation lower bound)}
\par
Let the time sequence be $t_n = 1-(n+1)^{-2}$, with $\Delta t_n = t_{n+1}-t_n$, and let $\eta:[0,1]\to[0,1]$ be a non‑decreasing $C^\infty$ function satisfying
\begin{enumerate}
    \item $\eta(t_n)=t_n$ for all $n\in\mathbb{N}$;
    \item $\frac{\mathrm d^k}{\mathrm d t^k}\eta(t_n)=0$ for all $n,k\in\mathbb{N}$, $k\ge1$;
    \item $\left|\frac{\mathrm d^k}{\mathrm d t^k}\eta(t)\right| \le C(k)n^{5k}$ for all $t\in[t_n,t_{n+1})$, $n,k\in\mathbb{N}$, $k\ge1$.
\end{enumerate}
Define the smoothed velocity field
\begin{equation}
    \tilde v^m(x,t) = \sum_{n=0}^{m} \chi_{[t_n,t_{n+1})}(\eta(t))\,\eta'(t)\,\frac{1}{\Delta t_n}\,\tilde v_n\!\left(x,\frac{\eta(t)-t_n}{\Delta t_n}\right),
    \label{1.4-1}
\end{equation}
and choose the viscosity coefficient $\mu_m = m^{10}5^{-2m}$. Let $\tilde\theta^m$ be the solution of the advection‑diffusion equation
\begin{equation}
    \partial_t\tilde\theta^m + \tilde v^m\cdot\nabla\tilde\theta^m = \mu_m\Delta\tilde\theta^m,\quad \tilde\theta^m(x,0) = \tilde\rho(x,0),
    \label{1.4-2}
\end{equation}
where $\tilde\rho$ is the global scalar field obtained by the same smoothing procedure applied to $\{\tilde\rho_n\}$. Then there exists a constant $C_*>0$ independent of $\epsilon$ such that for $\epsilon\le\epsilon_0$,
\begin{equation}
    \liminf_{m\to\infty} \mu_m \int_0^1 \|\nabla\tilde\theta^m(\cdot,t)\|_{L^2(\mathbb{T}^2)}^2 dt \ge C_*,
    \label{1.4-3}
\end{equation}
i.e., the perturbed system still has a positive lower bound for anomalous dissipation.
\end{theorem}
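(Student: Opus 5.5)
We follow the Brué--De Lellis contradiction scheme, using Theorem \ref{1.2} as the only input on the perturbed family. Every constant entering the argument comes from the bounds \eqref{1.3-1}--\eqref{1.3-3} and the normalisation \eqref{1.3-2}. These hold uniformly for $\epsilon\le\epsilon_0$ with $C+O(\epsilon)\le 2C$. So the final $C_*$ can be taken to depend only on the unperturbed geometry.

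\textbf{Step 1: frequency concentration of the ideal solution.} We work on the interval $[t_n,t_{n+1})$ with $n\le m$. By the recurrence \eqref{1.3-5} and the time reparametrisation $\eta$, the inviscid solution $\tilde\rho$ of \eqref{1.4-2} with $\mu=0$ equals $\tilde\rho_n\bigl(x,(\eta(t)-t_n)/\Delta t_n\bigr)$ there. The key facts are $\|\tilde\rho(\cdot,t)\|_{L^2}=1$, zero mean, and $\|\nabla\tilde\rho(\cdot,t)\|_{L^\infty}\lesssim 5^n$. We also need a lower bound on the negative Sobolev norm, $\|\tilde\rho(\cdot,t)\|_{\dot H^{-1}}\lesssim 5^{-n}$. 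This follows from the quasi-self-similar patching \eqref{1.2-4}: on each subsquare of side $5^{-n}$ the function is a rescaled zero-mean $\tilde\Theta_j$, so a Poincaré-type duality argument gives $5^{-n}$. Interpolating $1=\|\tilde\rho\|_{L^2}^2\le\|\tilde\rho\|_{\dot H^{-1}}\|\tilde\rho\|_{\dot H^1}$ then yields $\|\nabla\tilde\rho(\cdot,t)\|_{L^2}\gtrsim 5^n$. Consequently a fixed fraction of the $L^2$ mass sits at frequencies $\gtrsim 5^n$: $\|P_{\ge c5^n}\tilde\rho(\cdot,t)\|_{L^2}\ge c_0$.

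\textbf{Step 2: contradiction.} Suppose that along a subsequence $\mu_m\int_0^1\|\nabla\tilde\theta^m\|_{L^2}^2\,dt\to0$. We compare $\tilde\theta^m$ with $\tilde\rho$ via the energy estimate for $w=\tilde\theta^m-\tilde\rho$, which satisfies $\partial_tw+\tilde v^m\cdot\nabla w=\mu_m\Delta w+\mu_m\Delta\tilde\rho$. This gives
\[
\|w(t)\|_{L^2}^2\le 2\mu_m\int_0^t\|\nabla\tilde\theta^m\|_{L^2}\|\nabla\tilde\rho\|_{L^2}\,ds .
\]
On the time range up to $t_{m'}$ with $m'$ slightly less than $m$, we have $\|\nabla\tilde\rho\|_{L^2}\lesssim 5^{m'}$. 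Combined with Cauchy--Schwarz and the assumption, $\mu_m^{1/2}5^{m'}\to0$, so the right-hand side is $o(1)$. Hence $\tilde\theta^m(\cdot,t_{m'})$ is $L^2$-close to $\tilde\rho(\cdot,t_{m'})$, and by Step 1 it carries mass $\ge c_0/2$ at frequencies $\gtrsim 5^{m'}$.

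\textbf{Step 3: dissipation over the last stage.} During $[t_{m'},t_{m'+1}]$, in time of order $\Delta t_{m'}\sim m^{-3}$, the diffusion at rate $\mu_m$ acts on frequencies $5^{m'}$. The choice $\mu_m=m^{10}5^{-2m}$ gives $\mu_m5^{2m'}\Delta t_{m'}\gtrsim m^{7}5^{2(m'-m)}$. Taking $m-m'$ bounded, or $\sim\log m/\log 25$, this quantity is $\gg1$ while Step 2 still applies. Over this stage the advection can only shift frequencies by a bounded factor per unit rescaled time, controlled by $\|\nabla\tilde v^m\|_{L^\infty}$ from \eqref{1.3-1}. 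So the high-frequency mass must be dissipated, and $\mu_m\int\|\nabla\tilde\theta^m\|_{L^2}^2\ge c_0/4$, contradicting the assumption. The constant is $C_*=c_0/4$, independent of $\epsilon$.

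The main obstacle is Step 1: the uniform $\dot H^{-1}$ bound (equivalently, a mixing-scale bound) for the perturbed $\tilde\rho_n$. It requires that the patching \eqref{1.2-4} and the zero mean of each $\tilde\Theta_j$ survive perturbation, which Lemma \ref{1.1} supplies, with constants controlled via \eqref{1.2-3}. A secondary difficulty is the bookkeeping of $m'$ versus $m$ in Steps 2--3, which must make $\mu_m^{1/2}5^{m'}\to0$ and $\mu_m5^{2m'}\Delta t_{m'}\gg1$ hold simultaneously. This is exactly where the $m^{10}$ factor is used.
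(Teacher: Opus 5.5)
There is a genuine gap in the choice of the comparison window $[0,t_{m'}]$. You deferred it as bookkeeping, but your two requirements on $m'$ contradict each other. Up to constants, both are conditions on the same quantity $Y_{m'}:=\mu_m5^{2m'}\Delta t_{m'}\approx m^{7}5^{-2(m-m')}$, which is also comparable to the ideal dissipation $\mu_m\int_0^{t_{m'}}\|\nabla\tilde\rho\|_{L^2}^2$.

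\begin{itemize}
\item Step 2 needs this quantity to stay bounded, since the other factor in your energy estimate is only known to be $o(1)$. You even ask for $\mu_m^{1/2}5^{m'}\to0$, i.e.\ $m-m'-5\log_5 m\to\infty$.
\item Step 3 needs $Y_{m'}\gg1$, i.e.\ $m-m'<\tfrac72\log_5 m-O(1)$.
\end{itemize}

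With your suggested choices ($m-m'$ bounded, or $m-m'\sim\tfrac12\log_5 m$), Step 2 only yields $\|\tilde\theta^m-\tilde\rho\|_{L^2}^2\le o(1)\cdot O(m^{7/2})$, resp.\ $o(1)\cdot O(m^{3})$, which proves nothing. The factor $m^{10}$ does not reconcile the two requirements. Its role is to make the ideal dissipation reach order one strictly before the last stage $t_{m+1}$, and to keep $\mu_m\Delta\tilde v^m$ bounded in Theorem~\ref{4.2}.

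The missing idea is to stop exactly where the ideal dissipation is of order one, and to collect the lower bound over the whole window rather than over a final stage. The paper takes $t^*<t_{m+1}$ with $2\mu_m\int_0^{t^*}\|\nabla\tilde\rho\|_{L^2}^2=1$. Under the contradiction hypothesis $2\mu_m\int_0^1\|\nabla\tilde\theta^m\|_{L^2}^2\le\delta^2$, Lemma~\ref{4.4} gives $\sup_{[0,t^*]}\|\tilde\rho^m-\tilde\theta^m\|_{L^2}^2\le\delta$. Your Step 1 concentration then transfers pointwise in time: $\|\nabla\tilde\theta^m(s)\|_{L^2}\ge\frac\Lambda2 5^k\ge\frac{\Lambda}{2\tilde C}\|\nabla\tilde\rho(s)\|_{L^2}$ for $s\in[t_k,t_{k+1}]\cap[0,t^*]$, where $\tilde C$ is the constant in $\|\nabla\tilde\rho\|_{L^2}\le\tilde C5^k$. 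Integrating gives $2\mu_m\int_0^{t^*}\|\nabla\tilde\theta^m\|_{L^2}^2\ge\Lambda^2/(4\tilde C^2)$, which contradicts $\delta\ll1$.

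With this, your Step 3 is not needed, and as written it is not rigorous anyway:
\begin{itemize}
\item ``Advection shifts frequencies by a bounded factor'' is not a statement about Fourier projections.
\item \eqref{1.3-1} is a $C^\alpha$ bound and does not control $\nabla\tilde v^m$.
\item $L^2$-closeness at $t_{m'}$ does not give $\|\tilde\theta^m(t_{m'})\|_{\dot H^{-1}}\lesssim5^{-m'}$.
\end{itemize}
Salvaging your route would take several extra steps. You would fix $Y_{m'}\approx M$ for a large constant $M$ and take $\delta$ small depending on $M$. You would split $\tilde\theta^m$ on $[t_{m'},t_{m'+1}]$ by linearity into the evolution of $\tilde\rho(t_{m'})$ plus an $L^2$-small remainder. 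You would then use $\frac{d}{dt}\|\theta\|_{\dot H^{-1}}^2\le2\|\nabla\tilde v^m\|_{L^\infty}\|\theta\|_{\dot H^{-1}}^2$ with $\int_{t_n}^{t_{n+1}}\|\nabla\tilde v^m\|_{L^\infty}\,dt\le C$ (from the $C^1$ bounds of Lemma~\ref{1.1}), together with $\|\theta\|_{L^2}^2\le\|\theta\|_{\dot H^{-1}}\|\nabla\theta\|_{L^2}$. That is considerably more work than the balance argument.
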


\par
We now present the main theorem of this paper.
\par
\noindent
\begin{theorem}\label{1.4}
\textbf{(Existence and stability of solutions for the perturbed 3D forced Navier–Stokes equations)}
\par
In the $(2+\frac12)$-dimensional flow framework, define the three‑dimensional velocity field and forcing by
\begin{align}
     \tilde u^{\mu_m}(x,t) &= \bigl(\tilde v^m(x_1,x_2,t),\; \tilde\theta^m(x_1,x_2,t)\bigr),\\
     \tilde f^{\mu_m}(x,t) &= \bigl(\tilde g^m(x_1,x_2,t),\; 0\bigr),
     \label{1.5-1}
\end{align}
where $(\tilde u^{\mu_m},\tilde f^{\mu_m})$ solves
\begin{equation}
    \begin{cases}
\partial_t \tilde u^{\mu_m} + \tilde u^{\mu_m} \cdot \nabla \tilde u^{\mu_m} + \nabla \tilde p^{\mu_m} = \mu_m \Delta \tilde u^{\mu_m} + \tilde f^{\mu_m}, \\
\operatorname{div} \tilde u^{\mu_m} = 0,\\
\tilde u^{\mu_m}(\cdot,0) = u_0,
\end{cases}\qquad (x,t)\in \mathbb{T}^3\times[0,1],
\label{1.5-2}
\end{equation}
with $\tilde g^m=\partial_t\tilde v^m + \tilde v^m\cdot\nabla\tilde v^m - \mu_m\Delta\tilde v^m$, and the initial datum $\tilde u^{\mu_m}(\cdot,0) = (v_0,\theta_0)\in C^\infty(\mathbb{T}^3)$ is independent of $m$. Then:
\par
(1) Regularity: $\tilde f^{\mu_m}\in C^\infty([0,1];C^1(\mathbb{T}^3))$, and for every $\alpha\in(0,1)$ there exists a constant $\tilde C(\alpha)$ independent of $m$ and $\epsilon$ such that
\begin{equation}
    \|\tilde f^{\mu_m}\|_{C([0,1];C^\alpha(\mathbb{T}^3))} \le \tilde C(\alpha);
    \label{1.5-3}
\end{equation}
\par
(2) Stability: there exists $\epsilon_0>0$ depending only on the original geometry such that for any perturbation amplitude $\epsilon\le\epsilon_0$, there is a constant $C_*>0$ independent of $\epsilon$ satisfying
\begin{equation}
    \liminf_{m\to\infty} \mu_m \int_0^1 \|\nabla\tilde u^{\mu_m}\|_{L^2(\mathbb{T}^3)}^2 dt \ge C_*.
    \label{1.5-4}
\end{equation}
\end{theorem}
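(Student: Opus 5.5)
The plan is to reduce Theorem \ref{1.4} to Theorem \ref{1.3} through the $(2+\frac12)$-dimensional structure, following Bru\'e--De Lellis. The proof splits into three steps: the algebraic reduction, the regularity estimate for the forcing, and the transfer of the dissipation lower bound.

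\emph{Step 1: algebraic reduction.} Since $\tilde v^m$ and $\tilde\theta^m$ depend only on $(x_1,x_2)$, the field $\tilde u^{\mu_m}=(\tilde v^m,\tilde\theta^m)$ is divergence free because $\operatorname{div}\tilde v^m=0$, and $\tilde u^{\mu_m}\cdot\nabla=\tilde v^m\cdot\nabla_{x_1,x_2}$. The first two components of \eqref{1.5-2} then read $\partial_t\tilde v^m+\tilde v^m\cdot\nabla\tilde v^m+\nabla\tilde p=\mu_m\Delta\tilde v^m+\tilde g^m$. This holds with $\tilde p=0$ by the very definition of $\tilde g^m$. The third component is exactly \eqref{1.4-2}, since $\partial_{x_3}\tilde p=0$ and the third forcing component vanishes. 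For the initial datum I will check that $\tilde v^m(\cdot,0)$ and $\tilde\theta^m(\cdot,0)=\tilde\rho(\cdot,0)$ do not depend on $m$. Indeed, $\eta'(0)=0$ and $\tilde v_0$ is fixed, so the sum in \eqref{1.4-1} only involves the $n=0$ block near $t=0$.

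\emph{Step 2: the forcing estimate \eqref{1.5-3}.} We have $\|\tilde f^{\mu_m}\|_{C^\alpha}\le\|\partial_t\tilde v^m\|_{C^\alpha}+\|\tilde v^m\cdot\nabla\tilde v^m\|_{C^\alpha}+\mu_m\|\Delta\tilde v^m\|_{C^\alpha}$. On the block $[t_n,t_{n+1})$, the chain rule together with properties (1)--(3) of $\eta$ gives factors $\eta''$ and $(\eta')^2/\Delta t_n$. These contribute at most polynomial growth in $n$: $\Delta t_n^{-1}\sim n^3$ and $|\eta^{(k)}|\le C n^{5k}$. Theorem \ref{1.2}(1) supplies the decaying factor $5^{(\alpha-1)n}(C+O(\epsilon))$, and the geometric decay absorbs the polynomial. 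For the nonlinear term I will use the quasi-self-similar scaling: $\tilde v_n$ is built from rescaled copies of $\tilde V_i$ on squares of side $5^{-n}$, so $\|\nabla\tilde v_n\|_{C^\alpha}\lesssim5^{\alpha n}$ while $\|\tilde v_n\|_{L^\infty}\lesssim5^{-n}$. For the viscous term, $\mu_m\|\Delta\tilde v_n\|_{C^\alpha}\lesssim m^{10}5^{-2m}5^{(1+\alpha)n}$ with $n\le m$, which is bounded. I expect this step to be the main obstacle. The difficulty is that Theorem \ref{1.2} only records $C^\alpha$ bounds on $\tilde v_n$, while $C^{2,\alpha}$ scaling is needed, and $\tilde V_i$ is only $C^3$ in space. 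I would use the $C^3$ regularity from Lemma \ref{1.1}(1) and, after $C^1$-stability in \eqref{1.2-3}, rerun the scaling argument of \cite{alberti2019exponential} with constants $C+O(\epsilon)$. Taking $\epsilon\le\epsilon_0$ then makes $\tilde C(\alpha)$ independent of $\epsilon$ and $m$.

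\emph{Step 3: the lower bound \eqref{1.5-4}.} Since $\nabla\tilde u^{\mu_m}$ has no $x_3$-derivatives and $|\mathbb T|=1$,
\[
\mu_m\int_0^1\|\nabla\tilde u^{\mu_m}\|_{L^2(\mathbb{T}^3)}^2\,dt\ \ge\ \mu_m\int_0^1\|\nabla\tilde\theta^m\|_{L^2(\mathbb{T}^2)}^2\,dt .
\]
Taking $\liminf_{m\to\infty}$ and applying Theorem \ref{1.3} yields the constant $C_*$, which is independent of $\epsilon$, for all $\epsilon\le\epsilon_0$. The value of $\epsilon_0$ is the minimum of those in Lemma \ref{1.1}, Theorem \ref{1.3} and Step 2.
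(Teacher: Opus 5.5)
Your proposal is correct and follows the same route as the paper. You reduce to the coupled 2D system, obtain \eqref{1.5-3} from the $C^\alpha$ bound on $\tilde g^m$, and obtain \eqref{1.5-4} from $|\nabla\tilde u^{\mu_m}|^2\ge|\nabla\tilde\theta^m|^2$ together with Theorem~\ref{1.3}. The one difference is that the paper simply cites Theorem~\ref{4.2} for the forcing bound, whose proof is the term-by-term scaling you sketch in Step 2 and relies in the same implicit way on uniform $C^{2,\alpha}$ bounds for $\tilde V_i$; you rederive that bound instead, and you are more explicit about the zero pressure and the $m$-independence of the initial datum.
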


\par
Theorem \ref{1.4} shows that the Bru\'e--De Lellis construction is structurally stable under $C^6$ pure normal perturbations, with threshold $\epsilon_0$ depending only on the original geometry. In particular, the perturbed solutions still exhibit a positive lower bound on viscous dissipation, i.e., anomalous dissipation persists. Hence this phenomenon is not limited to an exact geometric example but occurs in an open neighbourhood of function spaces, supporting the universality of turbulence and the reliability of experiments.
\par
\textbf{Organization.}The paper is organised as follows. Section~2 introduces the pure normal perturbation and proves the $C^2$ stability of the area‑preserving map together with the $C^1$ stability of the local fields. Section~3 patches the perturbed blocks via the Peano curve to form a global quasi‑self‑similar family, establishing H\"older estimates for the velocity and gradient growth for the scalar. Section~4 smooths the construction in time and, by choosing $\mu_m=m^{10}5^{-2m}$, proves the uniform boundedness of the forcing in $C^\alpha$. Section~5 employs a frequency concentration lemma and a contradiction argument to obtain the stability of the anomalous dissipation lower bound. Finally, Section~6 embeds this 2D result into the $(2+\frac12)$-dimensional framework to prove Theorem 1.4.
\par
\textbf{Acknowledgments.} The author is deeply grateful to Prof. Liqun Zhang for his invaluable guidance and support throughout this research, which have been instrumental in shaping this work.

\section{Perturbed geometric structure}\label{sectionPerturbed geometric structure}
\par
Based on the geometric framework and the perturbation assumptions from Section 1, this section constructs for the perturbed central curves $\tilde{\Gamma}_i$ a compatible area-preserving map $\tilde{\Phi}_i$ and establishes its $C^2$ stability. From this result we deduce the $C^1$ stability of the local velocity and scalar fields $(\tilde{V}_i,\tilde{\Theta}_i)$, thereby providing the foundation for the global quasi-self-similar construction in the next section.
\par
\noindent
\begin{lemma}\label{2.1}
There exist constants $\epsilon_0>0$ and $C>0$, depending only on the original geometry, such that for $\epsilon\le\epsilon_0$ and any $t\in[0,1]$, the perturbed area-preserving map constructed from $\tilde{\Gamma}_i(t)$ following \cite{alberti2019exponential},
\begin{equation}
    \tilde{\Phi}_i(t,s,y) = \tilde{\gamma}_i(t,s) + \tilde{\beta}_i(t,s,y)\tilde{\eta}_i(t,s),\qquad y\in(-r,r),
    \label{2-8}
\end{equation}
satisfies $\det(\partial_s\tilde{\Phi}_i,\partial_y\tilde{\Phi}_i)=1$ and
\begin{equation}
    \sup_{t\in[0,1]} \|\tilde{\Phi}_i(t,\cdot,\cdot) - \Phi_i(t,\cdot,\cdot)\|_{C^2([0,L]\times(-r,r))} \le C\epsilon,
    \label{2-9}
\end{equation}
where $\Phi_i$ is the original area-preserving map and $\tilde{\beta}_i$ is defined implicitly by $\tilde{\beta}_i - \frac{\tilde{\kappa}_i}{2}\tilde{\beta}_i^2 = \frac{y}{\tilde{l}}$.
\end{lemma}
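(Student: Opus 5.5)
We compute the Jacobian of the tubular map, use the implicit function theorem for $\tilde\beta_i$, and propagate the $O(\epsilon)$ geometric differences through smooth compositions.

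\textbf{Step 1: Jacobian identity.} Write $\tilde l=|\partial_s\tilde\gamma_i|$, $\tilde\tau_i=\partial_s\tilde\gamma_i/\tilde l$, $\tilde\eta_i=\tilde\tau_i^\perp$, and define $\tilde\kappa_i$ by $\partial_s\tilde\tau_i=-\tilde l\tilde\kappa_i\tilde\eta_i$. By the Frenet relations, $\partial_s\tilde\eta_i=\tilde l\tilde\kappa_i\tilde\tau_i$ (with sign conventions as in the paper). Hence
\[
\partial_s\tilde\Phi_i=\tilde l(1+\tilde\kappa_i\tilde\beta_i)\tilde\tau_i+(\partial_s\tilde\beta_i)\tilde\eta_i,\qquad \partial_y\tilde\Phi_i=(\partial_y\tilde\beta_i)\tilde\eta_i .
\]
Therefore $\det(\partial_s\tilde\Phi_i,\partial_y\tilde\Phi_i)=\tilde l(1+\tilde\kappa_i\tilde\beta_i)\partial_y\tilde\beta_i$, up to the orientation sign fixed by the convention in \cite{alberti2019exponential}. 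Differentiating the defining relation $\tilde\beta_i-\frac{\tilde\kappa_i}{2}\tilde\beta_i^2=y/\tilde l$ in $y$ gives $(1-\tilde\kappa_i\tilde\beta_i)\partial_y\tilde\beta_i=1/\tilde l$. This yields unit determinant once the sign convention is matched, so that the curvature factors coincide. I will check the sign carefully against the unperturbed map $\Phi_i$, which satisfies the same identity.

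\textbf{Step 2: existence and regularity of $\tilde\beta_i$.} The function $F(\beta,\kappa,y/l)=\beta-\frac{\kappa}{2}\beta^2-y/l$ has $\partial_\beta F=1-\kappa\beta\ge\frac12$ when $r\|\kappa\|_\infty$ is small, which is the condition already imposed in the original construction. The implicit function theorem, or the explicit formula $\beta=\frac{1-\sqrt{1-2\kappa y/l}}{\kappa}$, which is real-analytic in $(\kappa,y/l)$ including $\kappa=0$, gives a smooth map $G$ with $\tilde\beta_i=G(\tilde\kappa_i,y/\tilde l)$ and $\beta_i=G(\kappa_i,y/l)$. Smoothness of $G$ on a compact neighbourhood gives Lipschitz bounds on $G$ and its first two derivatives.

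\textbf{Step 3: geometric stability, the main obstacle.} I need the following bounds:
\[
\|\tilde\gamma_i-\gamma_i\|_{C^2}\le C\epsilon,\quad \|\tilde l-l\|_{C^2}\le C\epsilon,\quad \|\tilde\eta_i-\eta_i\|_{C^2}\le C\epsilon,\quad \|\tilde\kappa_i-\kappa_i\|_{C^2}\le C\epsilon.
\]
This step loses derivatives: $\tilde\eta_i$ involves $\partial_s h_i$, and $\tilde\kappa_i$ involves $\partial_s^2 h_i$. A $C^2$ bound on $\tilde\Phi_i$ then needs $C^2$ control of $\tilde\kappa_i$, hence $h_i\in C^4$. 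The $C^6$ hypothesis in (i) suffices, and uniformity in $t$ comes from the $C^\infty$-in-time norm. Concretely, $\partial_s\tilde\gamma_i=l(1-\kappa_ih_i)\tau_i+\partial_sh_i\,\eta_i$, so $\tilde l=\sqrt{l^2(1-\kappa_ih_i)^2+(\partial_sh_i)^2}$. This is a smooth function of $(h_i,\partial_sh_i)$ that stays bounded away from zero for $\epsilon\le\epsilon_0$, and it differs from $l$ by $O(\epsilon)$ in $C^3$. Similarly, $\tilde\tau_i$, $\tilde\eta_i$ and $\tilde\kappa_i=-\langle\partial_s\tilde\tau_i,\tilde\eta_i\rangle/\tilde l$ are smooth functions of the jet $(h_i,\dots,\partial_s^{3}h_i)$ and of the original geometry. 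Each such difference is therefore bounded by $C\|h_i\|_{C^{4}}\le C\epsilon$ via the mean value theorem. Here I use that the original curves are smooth, with $\kappa_i$ and $\eta_i$ bounded in $C^6$, and the constants depend only on those bounds.

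\textbf{Step 4: conclusion.} Write
\[
\tilde\Phi_i-\Phi_i=(\tilde\gamma_i-\gamma_i)+(\tilde\beta_i-\beta_i)\tilde\eta_i+\beta_i(\tilde\eta_i-\eta_i).
\]
Each term is bounded in $C^2([0,L]\times(-r,r))$ by $C\epsilon$. For the middle term, apply the chain rule to $G(\tilde\kappa_i,y/\tilde l)-G(\kappa_i,y/l)$ together with Steps 2 and 3, and use that $C^2$ is an algebra on a bounded domain. Taking the supremum over $t\in[0,1]$ gives \eqref{2-9}. Finally, $\epsilon_0$ is chosen so that $\tilde l\ge l/2$ and $1-\tilde\kappa_i\tilde\beta_i\ge\frac12$ on $(-r,r)$, which keeps every step valid.
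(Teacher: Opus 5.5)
Your proof is correct in substance and follows the paper's skeleton. You use the same three-term decomposition of $\tilde\Phi_i-\Phi_i$, and your Step 3 is the paper's Lemma~\ref{2.6}, with the same derivative count $\|\tilde\kappa_i-\kappa_i\|_{C^2}\le C\|h_i\|_{C^4}$.

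The genuine difference is how you control $\tilde\beta_i-\beta_i$. In Lemma~\ref{2.7}, the paper subtracts the two quadratic relations and writes $(1-\kappa_i\beta_i)\delta\beta_i=\frac12\beta_i^2\delta\kappa_i+y\,\delta(1/l)+\frac12\mathcal Q_i$, where $\mathcal Q_i$ is quadratic in $\delta\beta_i$. It then runs a contraction argument in a $C^2$-ball of radius $O(\epsilon)$. You instead write $\beta=G(\kappa,y/l)$ with $G(\kappa,z)=2z/(1+\sqrt{1-2\kappa z})$, which is real-analytic wherever $1-2\kappa z$ is bounded below. You then conclude by the chain rule and the mean value theorem in $(\kappa,1/l)$.

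Your route is shorter and selects the correct branch of the quadratic automatically. The contraction produces a solution in a small ball, but one still needs the (easy) remark that it coincides with the root singled out by $\tilde\beta_i(s,0)=0$, rather than the large root $(1+\sqrt{1-2\tilde\kappa_i y/\tilde l})/\tilde\kappa_i$. The paper's fixed-point formulation avoids the closed form, but your implicit-function variant of Step~2 is equally general.

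One point in Step 1 must be fixed, and it is not an orientation issue. With $\partial_s\tilde\eta_i=\tilde l\tilde\kappa_i\tilde\tau_i$, your computation gives $\det(\partial_s\tilde\Phi_i,\partial_y\tilde\Phi_i)=\pm(1+\tilde\kappa_i\tilde\beta_i)/(1-\tilde\kappa_i\tilde\beta_i)$. This is not $\pm1$ for either choice of $\perp$.

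The consistent convention is $\partial_s\tilde\tau_i=\tilde l\tilde\kappa_i\tilde\eta_i$ and $\partial_s\tilde\eta_i=-\tilde l\tilde\kappa_i\tilde\tau_i$. Your own Step 3 formula $\partial_s\tilde\gamma_i=l(1-\kappa_ih_i)\tau_i+\partial_sh_i\,\eta_i$ already presupposes it. The paper also uses it in the proof of Lemma~\ref{2.6}; its displayed definition $\partial_s\tau_i=-l\kappa_i\eta_i$ in the introduction is a sign slip.

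With this convention, $\partial_s\tilde\Phi_i=\tilde l(1-\tilde\kappa_i\tilde\beta_i)\tilde\tau_i+(\partial_s\tilde\beta_i)\tilde\eta_i$. Since $\det(\tilde\tau_i,\tilde\eta_i)=1$ for the counterclockwise $\perp$, differentiating the defining relation in $y$ gives $\det=\tilde l(1-\tilde\kappa_i\tilde\beta_i)\partial_y\tilde\beta_i=1$ exactly. Correspondingly, drop the minus sign in your formula for $\tilde\kappa_i$ in Step 3; the stability estimates only need the same convention for $\kappa_i$ and $\tilde\kappa_i$. The paper merely asserts the Jacobian identity, so the corrected Step 1 fills in something it omits.
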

\par
\noindent
\begin{lemma}\label{2.2}
Under the assumptions of Lemma \ref{2.1}, define the perturbed local fields by
\begin{equation}
    \tilde{V}_i(t,\tilde{\Phi}_i(t,s,y)) = \partial_t\tilde{\Phi}_i(t,s,y),\qquad
    \tilde{\Theta}_i(t,\tilde{\Phi}_i(t,s,y)) = \bar{\Theta}_i(y/r), \label{2-10}
\end{equation}
where $\bar{\Theta}_i$ is a fixed smooth cut-off function supported in $[-r/2,r/2]$ with $\int\bar{\Theta}_i=0$ and $\int\bar{\Theta}_i^2=1$. Then for every $t\in[0,1]$,
\begin{equation}
    \|\tilde{V}_i(t,\cdot) - V_i(t,\cdot)\|_{C^1([0,1]^2)} \le C\epsilon,\qquad
    \|\tilde{\Theta}_i(t,\cdot) - \Theta_i(t,\cdot)\|_{C^1([0,1]^2)} \le C\epsilon,\label{2-11}
\end{equation}
where $(V_i,\Theta_i)$ are the original local fields, and the constant $C$ is independent of $t$.
\end{lemma}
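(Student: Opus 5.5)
The approach is to push the $C^2$ closeness of $\tilde\Phi_i$ to $\Phi_i$ from Lemma \ref{2.1} through the definitions \eqref{2-10}. Both local fields are pullbacks through the inverse maps, so everything reduces to $C^1$ estimates on $\tilde\Phi_i^{-1}$ and on $\partial_t\tilde\Phi_i$.

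\textbf{Step 1: the inverse maps.} Since $\det(\partial_s\Phi_i,\partial_y\Phi_i)=1$, the map $\Phi_i(t,\cdot,\cdot)$ is a $C^3$ diffeomorphism onto the tubular neighbourhood, with $\|D\Phi_i\|$ and $\|D\Phi_i^{-1}\|$ bounded by constants depending only on the original geometry ($l(t)$, $\|\kappa_i\|$, $r$). The bound \eqref{2-9} gives $\|D\tilde\Phi_i-D\Phi_i\|_{C^1}\le C\epsilon$. For $\epsilon\le\epsilon_0$, a quantitative inverse function theorem then shows that $\tilde\Phi_i$ is injective on the (slightly shrunk) strip. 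Its inverse satisfies $\|\tilde\Phi_i^{-1}-\Phi_i^{-1}\|_{C^1}\le C\epsilon$ on the common domain. For the zeroth-order part I use
\[
\tilde\Phi_i^{-1}-\Phi_i^{-1}=\Phi_i^{-1}\circ\Phi_i\circ\tilde\Phi_i^{-1}-\Phi_i^{-1}\circ\tilde\Phi_i\circ\tilde\Phi_i^{-1}
\]
together with the mean value theorem. For the derivative part I use the identity $D(\tilde\Phi_i^{-1})=(D\tilde\Phi_i)^{-1}\circ\tilde\Phi_i^{-1}$, which works because matrix inversion is Lipschitz near matrices of determinant one.

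The domain issue needs separate care. The boundary conditions (ii) make $h_i$ vanish to order $6$ at $s=0,L$, so the perturbed tube agrees with the original one near the ends of the curves. Away from the image of the strip, both fields come from the same patching in \cite{alberti2019exponential} and are extended identically. So the difference is supported in the common tube, and outside it the difference vanishes.

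\textbf{Step 2: the scalar field.} We have $\tilde\Theta_i-\Theta_i=\bar\Theta_i(\pi_y\tilde\Phi_i^{-1}/r)-\bar\Theta_i(\pi_y\Phi_i^{-1}/r)$, where $\pi_y$ denotes projection onto the $y$-coordinate. Since $\bar\Theta_i$ is smooth and fixed, the chain rule and Step 1 give the $C^1$ bound $C\epsilon$ directly.

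\textbf{Step 3: the velocity.} We have $\tilde V_i=(\partial_t\tilde\Phi_i)\circ\tilde\Phi_i^{-1}$. Lemma \ref{2.1} only controls the spatial $C^2$ norm, so I also need $\|\partial_t\tilde\Phi_i-\partial_t\Phi_i\|_{C^1}\le C\epsilon$. I would get this by redoing the argument of Lemma \ref{2.1} after differentiating in $t$, using that $h_i\in C^\infty([0,1];C^6)$.
\begin{itemize}
\item Apply the implicit function theorem to $\tilde\beta_i-\frac{\tilde\kappa_i}{2}\tilde\beta_i^2=y/\tilde l$. The derivative of the left side in $\tilde\beta_i$ is $1-\tilde\kappa_i\tilde\beta_i$, which stays away from $0$ for $r$ small.
\item This gives $\partial_t\tilde\beta_i$ as a smooth function of $(\tilde\kappa_i,\partial_t\tilde\kappa_i,\tilde l,\partial_t\tilde l,y)$.
\item Here $\tilde\kappa_i$ involves up to two $s$-derivatives of $h_i$. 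The $C^1$ norm in $(s,y)$ of $\partial_t\tilde\beta_i$ therefore needs only three $s$-derivatives of $h_i$ and $\partial_th_i$, all controlled by $\epsilon$.
\end{itemize}
Then the splitting
\[
\tilde V_i-V_i=(\partial_t\tilde\Phi_i-\partial_t\Phi_i)\circ\tilde\Phi_i^{-1}+\bigl[(\partial_t\Phi_i)\circ\tilde\Phi_i^{-1}-(\partial_t\Phi_i)\circ\Phi_i^{-1}\bigr]
\]
is bounded in $C^1$ by $C\epsilon$, using $\partial_t\Phi_i\in C^2$ and Step 1. All constants depend only on the original geometry and are uniform in $t$, since $[0,1]$ is compact.

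\textbf{Main obstacle.} I expect the hardest part to be the uniformity on the fixed domain $[0,1]^2$ rather than only on the tube images. The tubes $\tilde\Phi_i(\text{strip})$ and $\Phi_i(\text{strip})$ differ, so I must check that the fields are defined consistently near the boundary of the tubes. I would handle this using the support of $\bar\Theta_i$ in $[-r/2,r/2]$: $\tilde\Theta_i$ vanishes near the tube boundary, so its difference can be estimated on a shrunk common region. For $V_i$, I would use the construction in \cite{alberti2019exponential} via a stream function, which is cut off smoothly outside the tube, and estimate the difference of stream functions in $C^2$ in the same way.
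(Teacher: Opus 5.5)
On the common tube $U_i\cap\tilde U_i$ your Steps 1--3 reproduce the paper's argument: Lemma~\ref{2.8} for the inverse maps, Lemma~\ref{2.9} for $\partial_t\tilde\Phi_i$, and the chain-rule splitting of Case~1. Your treatment of the scalar near the tube boundary, via the support of $\bar\Theta_i$, is correct and cleaner than the paper's Case~2 (Hausdorff distance plus a Lipschitz bound). For $\epsilon$ small both scalar fields vanish identically on $U_i\triangle\tilde U_i$, so no collar estimate is needed for $\Theta$.

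The gap is the velocity on the collar $U_i\triangle\tilde U_i$.

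\emph{Your Step~1 claim does not follow.} You assert that the difference is supported in $U_i\cap\tilde U_i$. But condition (ii) makes $h_i$ and its derivatives vanish only at the points $s=0,L$. So the two tubes share their end faces, while their long sides $y=\pm r$ are displaced by up to $C\epsilon$. On $U_i\setminus\tilde U_i$ the field $V_i$ is given by the tube formula and $\tilde V_i$ by the exterior extension, and nothing makes these agree.

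\emph{The literal statement fails for $V$.} If \eqref{2-10} holds on the whole strip $|y|<r$, then on $\partial U_i$ one has $V_i=\partial_t\Phi_i(t,s,\pm r)$, which is generically nonzero. With the zero extension the paper uses in Case~4, $|\tilde V_i-V_i|$ is then of order one on the collar. The paper's bound $|V_i(x)|\le L\operatorname{dist}(x,\partial U_i)$ tacitly assumes $V_i|_{\partial U_i}=0$, so you will not find the fix there.

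\emph{What your repair still needs.} The field itself has to be modified, and your stream-function cutoff is the right idea, but it is only announced. To make it a proof you need three things.
\begin{itemize}
\item[(a)] Set $V_i=\nabla^\perp\bigl(\chi(Y_i/r)\psi_i\bigr)$ and $\tilde V_i=\nabla^\perp\bigl(\chi(\tilde Y_i/r)\tilde\psi_i\bigr)$, with $\chi\equiv1$ on a neighbourhood of $\operatorname{supp}\bar\Theta_i$ and $\operatorname{supp}\chi$ compact in $(-1,1)$. You must accept that \eqref{2-10} then holds only on the inner strip; the transport equation for $\tilde\Theta_i$ is unaffected.
\item[(b)] Normalise $\psi_i\circ\Phi_i$ and $\tilde\psi_i\circ\tilde\Phi_i$ to vanish at $(s,y)=(0,0)$. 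Then use $\partial_s(\psi_i\circ\Phi_i)=\det(\partial_s\Phi_i,\partial_t\Phi_i)$ and $\partial_y(\psi_i\circ\Phi_i)=\det(\partial_y\Phi_i,\partial_t\Phi_i)$, together with Lemmas~\ref{2.1} and~\ref{2.9}, to get $\|\tilde\psi_i\circ\tilde\Phi_i-\psi_i\circ\Phi_i\|_{C^2}\le C\epsilon$.
\item[(c)] Upgrade Step~1 to $C^2$ closeness of the inverse maps, since $Y_i$ sits inside $\chi$ and $\psi_i=(\psi_i\circ\Phi_i)\circ\Psi_i$ must be controlled in $C^2$. This follows from $\|\tilde\Phi_i-\Phi_i\|_{C^2}\le C\epsilon$ and $\Phi_i\in C^3$.
\end{itemize}
With these, both velocity fields vanish near the tube boundaries, and the collar is handled exactly as you handled the scalar.
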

\par
Differentiating the perturbed central curve gives
$\partial_s\tilde{\gamma}_i = l(1 - h_i\kappa_i)\tau_i + (\partial_s h_i)\eta_i$.
The stretching factor$ \mathcal{J}_i := \frac{d\tilde{\sigma}_i}{d\sigma} = \sqrt{(1 - h_i\kappa_i)^2 + \left(\frac{\partial_s h_i}{l}\right)^2}$ expands to second order gives
\begin{equation}
    \mathcal{J}_i = 1 - h_i\kappa_i + \frac{1}{2}\left(\frac{\partial_s h_i}{l}\right)^2 + O\bigl(\|h_i\|_{C^1([0,L])}^3\bigr).
    \label{2-13}
\end{equation}
Then the unit tangent and normal vectors are
\begin{align}
    \tilde{\tau}_i &= \tau_i + \frac{\partial_s h_i}{l}\,\eta_i + O\bigl(\|h_i\|_{C^1([0,L])}^2\bigr),\\
    \tilde{\eta}_i &= \eta_i - \frac{\partial_s h_i}{l}\,\tau_i + O\bigl(\|h_i\|_{C^1([0,L])}^2\bigr).
    \label{2-16}
\end{align}
The curvature to first order is
\begin{equation}
    \tilde{\kappa}_i = \kappa_i + \kappa_i^2 h_i + \frac{\partial_s^2 h_i}{l} + O\bigl(\|h_i\|_{C^2([0,L])}^2\bigr),
    \label{2-17}
\end{equation}
so the curvature change $\delta\kappa_i = \tilde{\kappa}_i - \kappa_i$ satisfies
\begin{equation}
    \delta\kappa_i = \kappa_i^2 h_i + \frac{\partial_s^2 h_i}{l} + O\bigl(\|h_i\|_{C^2([0,L])}^2\bigr).
    \label{2-18}
\end{equation}

\subsection{Geometric conditions for the perturbation}

To guarantee a divergence‑free velocity field compatible with $\tilde{\Gamma}_i$ and smooth global patching, $h_i$ must satisfy an area‑preserving constraint (a necessary condition) and an equal‑length condition (a constructive condition). This subsection derives their concrete form.

\subsubsection{Area-preserving condition.}

Let $A_i(t)$ be the area enclosed by $\Gamma_i(t)$ and $\partial[0,1]^2$. In the construction of a divergence-free velocity field \cite{alberti2019exponential}, the rate of change of area is $\frac{d}{dt}|A_i(t)| = \int_{\Gamma_i(t)} V_{i,n}\,d\sigma$. By the divergence theorem,
\[
\int_{\Gamma_i(t)} V_{i,n}\,d\sigma = \int_{\Omega_i(t)} \operatorname{div} V_i\,dx = 0,
\]
hence $A_i(t)\equiv\frac12$. For the perturbed curves $\tilde{\Gamma}_i(t)$ to be compatible with a divergence-free velocity field, their enclosed area must also remain constant.

\begin{lemma}\label{2.3}
For $\tilde{\gamma}_i = \gamma_i + h_i\eta_i$ with $h_i$ satisfying (i)–(iii), the area difference at any fixed $t\in[0,1]$ is
\begin{equation}
    \Delta A_i(t) = \tilde{A}_i(t) - A_i(t) = \frac12\int_{\Gamma_i} h_i^2\kappa_i\,d\sigma + \int_{\Gamma_i} h_i\,d\sigma.
    \label{2-19}
\end{equation}
\end{lemma}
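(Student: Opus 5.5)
Write the area enclosed by a curve and the boundary $\partial[0,1]^2$ as a signed area given by Green's formula, and compare the two regions directly. Because $h_i$ and all its $s$-derivatives vanish at $s=0$ and $s=L$ (condition (ii)), $\tilde\Gamma_i(t)$ and $\Gamma_i(t)$ have the same endpoints on $\partial[0,1]^2$. Hence $\Delta A_i(t)$ is exactly the signed area of the thin strip between the two curves. I will compute it by parametrising the strip with the tubular map
\[
\Psi(s,\lambda)=\gamma_i(t,s)+\lambda\,\eta_i(t,s),\qquad 0\le\lambda\le h_i(t,s),\ s\in[0,L].
\]
This map is a diffeomorphism onto its image once $\epsilon_0$ is small compared with the inverse of $\sup|\kappa_i|$, since then $|h_i\kappa_i|<1$. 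Condition (iii) is not needed for fixed $t$; it only ensures $\Delta A_i(0)=0$.

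\textbf{Step 1 (Jacobian).} From the Frenet relation $\partial_s\tau_i=-l\kappa_i\eta_i$ and $\eta_i=\tau_i^\perp$, one gets $\partial_s\eta_i=l\kappa_i\tau_i$ (sign up to the orientation convention). This is consistent with the formula $\partial_s\tilde\gamma_i=l(1-h_i\kappa_i)\tau_i+(\partial_sh_i)\eta_i$ already recorded above. Therefore
\[
\partial_s\Psi=l(1-\lambda\kappa_i)\tau_i,\qquad \partial_\lambda\Psi=\eta_i,\qquad
\det(\partial_s\Psi,\partial_\lambda\Psi)=l\,(1-\lambda\kappa_i),
\]
taking $\det(\tau_i,\eta_i)=1$. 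Orient so that positive $h_i$ enlarges $A_i$; otherwise the overall sign flips.

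\textbf{Step 2 (integration).} Integrating $\lambda$ from $0$ to $h_i$ gives
\[
\Delta A_i(t)=\int_0^L\int_0^{h_i}l(1-\lambda\kappa_i)\,d\lambda\,ds=\int_0^L\Bigl(h_i-\tfrac12\kappa_i h_i^2\Bigr)l\,ds .
\]
With $d\sigma=l\,ds$ this becomes $\int_{\Gamma_i}h_i\,d\sigma-\tfrac12\int_{\Gamma_i}h_i^2\kappa_i\,d\sigma$.

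\textbf{Main obstacle (sign convention).} The claimed formula has $+\tfrac12\int h_i^2\kappa_i$, so the curvature sign convention must be reconciled with the Jacobian. I would check carefully whether $\eta_i=\tau_i^\perp$ and $\partial_s\tau_i=-l\kappa_i\eta_i$ give $\partial_s\eta_i=+l\kappa_i\tau_i$. If instead the region $A_i$ lies on the side of $-\eta_i$, then $\Delta A_i$ is computed with Jacobian $l(1+\lambda\kappa_i)$. That yields exactly $\int h_i\,d\sigma+\tfrac12\int h_i^2\kappa_i\,d\sigma$, consistent with condition (iv).

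\textbf{Cross-check.} As an alternative route, I would use Green's formula $A=\tfrac12\oint x\wedge dx$ applied to $\tilde\gamma_i$ minus $\gamma_i$, integrating by parts with the vanishing boundary terms from (ii). This produces the same expression and fixes the sign unambiguously. No smallness beyond injectivity of $\Psi$ is needed, so the identity is exact, with no $O(\epsilon^3)$ remainder.
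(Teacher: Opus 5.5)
\textbf{There is a genuine gap: a sign slip in Step~1, and the repair you propose does not work.}

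From $\eta_i=\tau_i^\perp$ and $\partial_s\tau_i=-l\kappa_i\eta_i$ you correctly obtain $\partial_s\eta_i=+l\kappa_i\tau_i$. But then
\[
\partial_s\Psi=l\tau_i+\lambda\,\partial_s\eta_i=l(1+\lambda\kappa_i)\tau_i,
\]
not $l(1-\lambda\kappa_i)\tau_i$. The formula $\partial_s\tilde\gamma_i=l(1-h_i\kappa_i)\tau_i+(\partial_sh_i)\eta_i$ that you cite as a consistency check is in fact \emph{inconsistent} with the Section~1.2 convention. It corresponds to $\partial_s\tau_i=+l\kappa_i\eta_i$, which is also the convention used in Lemma~\ref{2.6}, so it cannot confirm your sign.

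The lemma as stated is the Section~1.2 version. The paper's proof expands $\frac12\int_0^L\det(\tilde\gamma_i,\partial_s\tilde\gamma_i)\,ds$ bilinearly and integrates the $\partial_sh_i$ term by parts, with boundary terms vanishing by (ii). This reduces the difference to
\[
\int_0^L h_i\,l\det(\eta_i,\tau_i)\,ds+\tfrac12\int_0^L h_i^2\det(\eta_i,\partial_s\eta_i)\,ds,
\]
which equals the claim precisely because $\partial_s\eta_i=l\kappa_i\tau_i$ and $\det(\eta_i,\tau_i)=1$. With the corrected Jacobian $l(1+\lambda\kappa_i)$, your Step~2 gives $\int_0^L(h_i+\frac12\kappa_ih_i^2)\,l\,ds$ immediately, and the ``main obstacle'' disappears.

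\textbf{Why your proposed fix fails.} Which side of $\Gamma_i$ the region $A_i$ lies on only decides whether the strip is added or removed. That multiplies the \emph{whole} result by $\pm1$; it cannot turn $1-\lambda\kappa_i$ into $1+\lambda\kappa_i$. The relative sign of the quadratic term is fixed solely by whether $\partial_s\eta_i=+l\kappa_i\tau_i$ or $-l\kappa_i\tau_i$, i.e.\ whether $\eta_i$ points away from or toward the centre of curvature where $\kappa_i>0$. Under the second convention the exact answer is $\pm\bigl(\int_{\Gamma_i}h_i\,d\sigma-\frac12\int_{\Gamma_i}h_i^2\kappa_i\,d\sigma\bigr)$, and no choice of orientation recovers the statement.

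\textbf{Once the sign is fixed, your route is a valid alternative.} The tubular-strip argument differs from the paper's expansion and explains geometrically why the identity is exact: the parallel-curve length factor is affine in $\lambda$. To make ``$\Delta A_i$ is the signed strip area'' rigorous, including when $h_i$ changes sign, apply Stokes to $d\bigl(\frac12(x_1dx_2-x_2dx_1)\bigr)=dx_1\wedge dx_2$, pulled back by $H(s,u)=\gamma_i(s)+u\,h_i(s)\eta_i(s)$ on $[0,L]\times[0,1]$. The lateral edges collapse to points by (ii), giving
\[
\tilde A_i-A_i=\int_0^L\!\!\int_0^1\det(\partial_uH,\partial_sH)\,du\,ds=\det(\eta_i,\tau_i)\int_0^L h_i\bigl(1+\tfrac12\kappa_ih_i\bigr)\,l\,ds .
\]
This needs neither injectivity of $\Psi$ nor any smallness of $\epsilon_0$.
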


\begin{proof}
For a fixed $t$, write $A_i = \frac12\int_0^L \det(\gamma_i,\partial_s\gamma_i)\,ds$ and $\tilde{A}_i = \frac12\int_0^L \det(\tilde{\gamma}_i,\partial_s\tilde{\gamma}_i)\,ds$, where $\det(a,b)=a_1b_2-a_2b_1$. Then
\begin{align*}
\Delta A_i(t) &= \frac12\int_0^L \bigl(\tilde{\gamma}_i\times\partial_s\tilde{\gamma}_i - \gamma_i\times\partial_s\gamma_i\bigr)\,ds \\
&= \frac12\int_0^L \bigl[(\gamma_i+h_i\eta_i)\times(\partial_s\gamma_i + h_i\partial_s\eta_i + \eta_i\partial_s h_i) - \gamma_i\times\partial_s\gamma_i\bigr]\,ds \\
&= \frac12\int_0^L \bigl[h_i\eta_i\times\partial_s\gamma_i + (\partial_s h_i)\gamma_i\times\eta_i + h_i\gamma_i\times\partial_s\eta_i + h_i^2\eta_i\times\partial_s\eta_i\bigr]\,ds.
\end{align*}
Using endpoint conditions, integration by parts, and the Frenet formulas, we obtain
\[
\Delta A_i = \frac12\int_0^L h_i^2\kappa_i l\,ds + \int_0^L h_i l\,ds = \frac12\int_{\Gamma_i} h_i^2\kappa_i\,d\sigma + \int_{\Gamma_i} h_i\,d\sigma.
\]
\end{proof}

We now prove the existence of nontrivial perturbation functions.

\begin{lemma}\label{2.4}
There exists a nontrivial family $\{h_i\}_{i=1}^N$ satisfying the conditions (i)–(iii) such that $A_i(t)\equiv\frac12$.
\end{lemma}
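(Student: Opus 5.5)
By Lemma \ref{2.3}, keeping the enclosed area at $A_i(t)\equiv\frac12$ is the same as making the right-hand side of \eqref{2-19} vanish for every $t\in[0,1]$. So we need, for each $t$,
\[
F_i(t,h_i):=\int_{\Gamma_i(t)} h_i\,d\sigma+\frac12\int_{\Gamma_i(t)}h_i^2\kappa_i\,d\sigma=0 .
\]
The plan is to perturb a fixed profile and fix the area defect with a scalar correction, solved by the implicit function theorem uniformly in $t$.

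\textbf{Step 1: choose the building blocks.} Pick two functions $\phi,\psi\in C_c^\infty((0,L))$, not identically zero. Because they vanish to all orders at $s=0$ and $s=L$, the boundary conditions (ii) hold for every combination of them. Take $\phi$ with $\int_0^L\phi\,l(t)\,ds=0$; for instance $\phi=\varphi'$ with $\varphi\in C_c^\infty$. Since $l$ depends only on $t$, this mean-zero condition then holds for all $t$ at once. Take $\psi\ge0$ with $\int_0^L\psi\,ds=1$, so that $\int_{\Gamma_i(t)}\psi\,d\sigma=l(t)\ge l_{\min}>0$. Let $a\in C^\infty([0,1])$ satisfy $a(0)=0$, $a\not\equiv0$ and $\|a\|_{C^k}$ small for the relevant $k$.

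\textbf{Step 2: solve for the correction.} Make the ansatz
\[
h_i(t,s)=a(t)\phi(s)+b(t)\psi(s).
\]
Substituting into $F_i=0$ gives, for each $t$, a scalar quadratic equation in $b$:
\[
b\,l(t)+\tfrac12\int (a\phi+b\psi)^2\kappa_i\,l\,ds=0.
\]
Write $G(t,a,b)$ for the left-hand side. Then $G(t,0,0)=0$ and $\partial_bG(t,0,0)=l(t)\ne0$. The implicit function theorem, with $t$ as a parameter, gives a unique small solution $b=B(t,a)$, smooth in $(t,a)$, with $B(t,0)=0$ and $|B|\le C a^2$. Alternatively one can solve the quadratic explicitly and take the root near $0$. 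Setting $b(t)=B(t,a(t))$ then gives $b(0)=0$, so condition (iii) holds, and $b\in C^\infty$.

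\textbf{Step 3: verify and check nontriviality.} Smallness in $C^\infty([0,1];C^6)$ follows because $\|b\|_{C^k}\le C_k\|a\|_{C^k}^2$, since $\kappa_i$ and $l$ are smooth in $t$. Nontriviality follows because at a time $t$ with $a(t)\ne0$, the function $h_i(t,\cdot)$ has a nonzero $\phi$-component, and $\phi$, $\psi$ are linearly independent. Doing this for each $i=1,\dots,N$ gives the required family.

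The main obstacle is making the $C^\infty$-in-time estimates uniform on all of $[0,1]$. This needs $l(t)$ bounded below and $\kappa_i$ smooth in $t$ with bounded derivatives, which holds for the Alberti--Crippa--Mazzucato geometry. It is also important that the implicit function theorem is applied pointwise with constants independent of $t$, so that no loss of time regularity occurs.
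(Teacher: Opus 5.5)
Your proof is correct, and it follows a different, more elementary route than the paper's.

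\emph{The paper's route.} The paper fixes $t$ and works in the Banach space $X_i$ of $C^6$ functions vanishing to order $6$ at the endpoints. It writes $h_i=\alpha_i\varphi_{i,0}+u_i$, with $u_i$ ranging over the whole infinite-dimensional kernel $\{\int_{\Gamma_i}u_i\,d\sigma=0\}$. The Banach-space implicit function theorem then gives the scalar $\alpha_i(u_i)$. So at each fixed time the admissible perturbations form a local graph over $\ker DF_i(0)$, which is the manifold $\mathcal H$ that Lemma~\ref{2.5} later uses.

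\emph{Your route.} You freeze a two-function ansatz $a(t)\phi+b(t)\psi$ and reduce the area condition to one scalar quadratic in $b$ per time. You solve it by a parametric scalar implicit function theorem, or the quadratic formula, uniformly on $[0,1]$ using $l\ge l_{\min}>0$. The transversality mechanism is the same as in the paper: $\psi$ plays the role of $\varphi_{i,0}$ and $\phi$ that of $u_i$.

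\emph{What your version gains.} It explicitly delivers the time-dependent requirements of (i)--(iii):
\begin{itemize}
\item $b(t)=B(t,a(t))$ is $C^\infty$ in $t$;
\item $b(0)=B(0,0)=0$, so $h_i(0,\cdot)=0$;
\item compact support of $\phi,\psi$ in $(0,L)$ gives (ii) for all $k$ and $\varsigma$.
\end{itemize}
The paper's fixed-$t$ argument never checks that the pointwise solutions assemble into a family that is smooth in $t$ and vanishes at $t=0$. Since $F_i$ depends on $t$ through $\kappa_i$ and $l$, this would require choosing $u_i(t)$ smooth with $u_i(0)=0$ and invoking smooth dependence of $\alpha_i$ on $(t,u_i)$. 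Your nontriviality argument, via linear independence of $\phi$ and $\psi$, is also cleaner than the paper's.

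\emph{What you give up.} You lose the infinite-dimensional structure of the solution set, which the lemma as stated does not need. It is easily recovered: replace $a(t)\phi$ by any $u\in C^\infty([0,1];C_c^\infty((0,L)))$ with $\int_0^L u\,ds=0$ and $u(0,\cdot)=0$. This yields the same kind of scalar equation for $b$, whose root near zero vanishes at $t=0$.

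Finally, reading the smallness in (i) as a $C^k$ bound for each relevant $k$ is the sensible interpretation, since the paper's ``$C^\infty$ norm'' is not a single norm.
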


\begin{proof}
Fix $t$ and set
\[
X_i = \bigl\{h_i\in C^6([0,L]) \mid \partial_s^\varsigma h_i(0)=\partial_s^\varsigma h_i(L)=0,\ \varsigma=0,\dots,6\bigr\},
\]
with the $C^6$ norm $\|h_i\|_{C^6} = \sum_{k=0}^6 \|h_i^{(k)}\|_{L^\infty}$, making $X_i$ a Banach space. Define $F_i(h_i) = \frac12\int_{\Gamma_i} h_i^2\kappa_i\,d\sigma + \int_{\Gamma_i} h_i\,d\sigma$. Then $F_i(0)=0$ and $F_i$ is $C^\infty$. The Fréchet derivative at $0$ is
\[
DF_i(0)\varphi_i = \int_{\Gamma_i}\varphi_i\,d\sigma,\qquad \forall\varphi_i\in X_i,
\]
with kernel $\ker(DF_i(0)) = \{\varphi_i\in X_i : \int_{\Gamma_i}\varphi_i\,d\sigma = 0\}$. Choose $\varphi_{i,0}\in X_i$ with $\int_{\Gamma_i}\varphi_{i,0}=1$; then $DF_i(0)$ is surjective.

Decompose $h_i = \alpha_i\varphi_{i,0} + u_i$ with $u_i\in\ker(DF_i(0))$ and $\alpha_i = \int_0^L h_i\,d\sigma$. Define $G_i(\alpha_i,u_i)=F_i(\alpha_i\varphi_{i,0}+u_i)$. Then $G_i(0,0)=0$ and $\partial_{\alpha_i}G_i(0,0)=1\neq0$. By the implicit function theorem \cite{zeidler1993nonlinear}, there exist a neighbourhood $U_i\subset\ker(DF_i(0))$ of $0$ and a unique smooth map $\alpha_i:U_i\to\mathbb{R}$ with $\alpha_i(0)=0$ such that for every sufficiently small $u_i\in U_i$, the function $h_i = \alpha_i(u_i)\varphi_{i,0}+u_i$ satisfies
\[
F_i(\alpha_i(u_i)\varphi_{i,0}+u_i)=0,\qquad \forall u_i\in U_i.
\]
\begin{itemize}
    \item \textbf{Nontriviality:} Take any non‑zero $u_i\in U_i$ with sufficiently small norm. Then $h_i = \alpha_i(u_i)\varphi_{i,0}+u_i$ is generally non‑zero, because if $\alpha_i(u_i)=0$ then $h_i=u_i$, but $u_i$ would need to satisfy $F_i(u_i)=0$, which is not guaranteed since $\kappa_i\not\equiv0$.
    \item \textbf{Smallness:} As $\|u_i\|_{X_i}\to0$, continuity of $\alpha_i$ implies $\|\alpha_i\|_{X_i}\to0$, hence $\|h_i\|_{X_i}\to0$.
\end{itemize}
\end{proof}

The implicit function theorem yields an infinite‑dimensional family of perturbations: the kernel $\ker(DF_i(0))$ is infinite‑dimensional, and any sufficiently small $u_i$ in it can be chosen arbitrarily, with $\alpha_i(u_i)$ adjusted to satisfy the area‑preserving condition. Consequently, the admissible perturbations $h_i$ constitute an infinite‑dimensional manifold.

\subsubsection{Equal-length condition}

In the quasi‑self‑similar construction, smoothness of $v_n$ across adjacent blocks requires the same tangential direction and arclength element at boundaries \cite{alberti2019exponential}. A convenient sufficient method is to make sure that all perturbed central curves inside $[0,1]^2$ have equal total length (a technical simplification).

The total length of a perturbed central curve is given by
\begin{equation}
    \tilde{L}_i(t) = \int_0^L |\partial_s\tilde{\gamma}_i|\,ds = \int_0^L \sqrt{l(t)^2\bigl(1 - \kappa_i(t,s)h_i(t,s)\bigr)^2 + \bigl(\partial_s h_i(t,s)\bigr)^2}\,ds.
    \label{2-20}
\end{equation}
We therefore seek a nontrivial family $\{h_i\}_{i=1}^N$ that simultaneously fulfills the area-preserving condition and the equal-length condition
\begin{equation}
    \tilde{L}_i(t) = \tilde{L}_{i'}(t),\qquad \forall t\in[0,1],\ \forall i,i'\in\{1,\dots,N\}.
    \label{2-21}
\end{equation}

\begin{lemma}\label{2.5}
There exists a nontrivial family $\{h_i\}_{i=1}^N$ satisfying (i)-(iv) together with the equal-length condition \eqref{2-21}.
\end{lemma}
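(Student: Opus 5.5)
We extend the implicit function argument of Lemma \ref{2.4} from one scalar constraint per curve to the coupled system made of the $N$ area constraints and the $N-1$ length constraints $\tilde L_i-\tilde L_1=0$, $i=2,\dots,N$. Fix $t$ and work in the Banach space $X=X_1\times\cdots\times X_N$ of Lemma \ref{2.4}. Define $\mathcal F:X\to\mathbb{R}^{N}\times\mathbb{R}^{N-1}$ by
\[
\mathcal F(h)=\Bigl(F_1(h_1),\dots,F_N(h_N),\ \tilde L_2(h_2)-\tilde L_1(h_1),\dots,\tilde L_N(h_N)-\tilde L_1(h_1)\Bigr),
\]
where $F_i$ is the area functional of Lemma \ref{2.3}. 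Since $\kappa_i$ and $l$ are smooth and the square root in \eqref{2-20} stays near $l(t)>0$ for small $h_i$, the map $\mathcal F$ is $C^\infty$ near $0$. It satisfies $\mathcal F(0)=0$ because all unperturbed curves have the common length $\int_0^L l\,ds=Ll(t)$.

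Next we compute the linearization. From \eqref{2-13}, $D\tilde L_i(0)\varphi_i=-\int_{\Gamma_i}\kappa_i\varphi_i\,d\sigma$, and $DF_i(0)\varphi_i=\int_{\Gamma_i}\varphi_i\,d\sigma$. The key step is to show that $D\mathcal F(0)$ is surjective onto $\mathbb{R}^{2N-1}$. It suffices that for each $i$ the two functionals $\varphi\mapsto\int_{\Gamma_i}\varphi\,d\sigma$ and $\varphi\mapsto\int_{\Gamma_i}\kappa_i\varphi\,d\sigma$ are linearly independent on $X_i$. This holds exactly when $\kappa_i$ is not constant on $\Gamma_i(t)$, because $X_i$ contains all $C_c^\infty(0,L)$ functions, which are dense enough to separate $1$ from $\kappa_i$. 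Given independence, pick $\varphi_{i,0},\psi_{i,0}\in X_i$ with
\[
\int_{\Gamma_i}\varphi_{i,0}\,d\sigma=1,\qquad \int_{\Gamma_i}\kappa_i\varphi_{i,0}\,d\sigma=0,\qquad \int_{\Gamma_i}\psi_{i,0}\,d\sigma=0,\qquad \int_{\Gamma_i}\kappa_i\psi_{i,0}\,d\sigma=-1.
\]
The images of the $\varphi_{i,0}$ then give the first $N$ coordinates, and suitable combinations of the $\psi_{i,0}$ produce any prescribed length differences. Surjectivity therefore holds, and the kernel has finite codimension $2N-1$ in an infinite-dimensional space.

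We then apply the implicit function theorem \cite{zeidler1993nonlinear} to the splitting $X=\ker D\mathcal F(0)\oplus E$, where $E=\operatorname{span}\{\varphi_{i,0},\psi_{j,0}\}$ is a complement of dimension $2N-1$. This gives a smooth map $u\mapsto e(u)\in E$ with $e(0)=0$ and $De(0)=0$ such that $h=u+e(u)$ solves $\mathcal F(h)=0$ for all small $u\in\ker D\mathcal F(0)$. Nontriviality follows as in Lemma \ref{2.4}: $h=u+e(u)$ with $e(u)=O(\|u\|^2)$, so $h\neq0$ whenever $u\neq0$, and smallness follows from continuity.

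To recover the time dependence, namely conditions (i) and (iii), take $u(t)=\lambda(t)u^\ast(t)$ with $\lambda\in C^\infty$, $\lambda(0)=0$, and $u^\ast(t)$ smooth in $t$ with values in $\ker D\mathcal F(0)(t)$. Such a $u^\ast$ can be obtained by projecting a fixed bump function via smoothly varying $\varphi_{i,0}(t),\psi_{i,0}(t)$. The IFT with $t$ as a smooth parameter (compactness of $[0,1]$ gives uniform neighbourhoods) then yields $h(t)$ that is $C^\infty$ in $t$ and vanishes at $t=0$, since $e(0)=0$. The boundary conditions (ii) hold automatically because all elements of $X_i$, including the basis functions, are compactly supported in $(0,L)$.

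The main obstacle is the independence of $1$ and $\kappa_i$ when some $\Gamma_i(t)$ is a straight segment or a circular arc, since then $\kappa_i$ is constant. If $\kappa_i\equiv c$, the two linearized constraints coincide up to a factor. I would first try to handle this degeneracy with a second-order analysis, using the quadratic terms $\frac12(\partial_sh/l)^2$ from \eqref{2-13} and $\frac12h^2\kappa_i$ from \eqref{2-19}, which are independent. The alternative is to exploit the freedom to adjust only the other curves' lengths, leaving the degenerate curve $h_i\equiv0$ as a reference for the common length.
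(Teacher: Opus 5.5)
Your argument is the paper's argument in a different package. The paper first forms the area-preserving manifold $\mathcal H$ with $T_0\mathcal H=\prod_i\ker DF_i(0)$ and then applies the implicit function theorem to $\Xi=(\tilde L_k-\tilde L_1)_{k\ge2}$ on $\mathcal H$. You impose all $2N-1$ constraints at once through $\mathcal F$. Since each $DF_i(0)$ is onto, surjectivity of $D\mathcal F(0)$ is equivalent to surjectivity of $D\Xi(0)$ on $T_0\mathcal H$, so the two routes coincide.

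Where they differ, your version is the more accurate one. The paper justifies surjectivity by ``$\kappa_{k+1}\not\equiv0$ and $\int_{\Gamma_i}\varphi_i\,d\sigma=0$ on $T_0\mathcal H$''. But if $\kappa_{k+1}\equiv c\neq0$, then $\int_{\Gamma_{k+1}}\kappa_{k+1}\varphi_{k+1}\,d\sigma=c\int_{\Gamma_{k+1}}\varphi_{k+1}\,d\sigma=0$ on $T_0\mathcal H$. So non-constancy of $\kappa_i$, as you say, is the correct condition. You also treat the $t$-dependence, which the paper does not address. Your nontriviality argument ($u\neq0$ in $\ker D\mathcal F(0)$ and $e(u)\in E$ cannot cancel) is sound, whereas the one the paper borrows from Lemma \ref{2.4} is not really an argument.

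What your write-up leaves open is the degenerate case, and it should be closed by freezing curves, not by a second-order expansion. The second-order route cannot restore local surjectivity. For a segment, (iv) reads $\int_{\Gamma_i}h_i\,d\sigma=0$ and $\tilde L_i=\int_0^L\sqrt{l^2+(\partial_sh_i)^2}\,ds>Ll$ unless $h_i\equiv0$. A circular arc likewise minimises length among curves with the same endpoints and enclosed area (Dido's problem). So an admissible perturbation of a constant-curvature curve can only lengthen it.

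Instead, note that your surjectivity argument only needs $\psi_{j,0}$ for $j\ge2$, so curve $1$ may already be degenerate. More generally, set $h_i\equiv0$ for every degenerate curve, and impose $F_j(h_j)=0$ and $\tilde L_j(h_j)=L\,l(t)$ on the remaining ones. For mere existence, a single curve with non-constant curvature suffices.

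The same issue affects your time-dependence step. ``Compactness of $[0,1]$ gives uniform neighbourhoods'' requires the biorthogonal functions $\varphi_{i,0}(t),\psi_{i,0}(t)$ to stay bounded, i.e.\ $\kappa_i(t,\cdot)$ must be uniformly non-constant on all of $[0,1]$. This can fail at some times, for instance at $t=0$ if an initial interface $\Gamma_i(0)$ is a straight segment. The fix is to take $\lambda\in C_c^\infty$ supported in a short window around a time $t_0$ where non-degeneracy holds. Since $\mathcal F(t,0)=0$ forces $e(t,0)=0$ by uniqueness, you get $h\equiv0$ outside the window, and (iii) and smoothness in $t$ follow automatically.
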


\begin{proof}
Let $\mathcal{H}$ be the infinite-dimensional manifold of families $\{h_i\}_{i=1}^N$ satisfying the area-preserving condition; its tangent space is $\prod_{i=1}^N \ker DF_i(0)$. Define $\Xi:\mathcal{H}\to\mathbb{R}^{N-1}$ by
\[
\Xi(\boldsymbol h) = \bigl(\tilde{L}_2(\boldsymbol h)-\tilde{L}_1(\boldsymbol h),\ \tilde{L}_3(\boldsymbol h)-\tilde{L}_1(\boldsymbol h),\ \dots,\ \tilde{L}_N(\boldsymbol h)-\tilde{L}_1(\boldsymbol h)\bigr).
\]
In the original unperturbed configuration all curves have equal length, hence $\Xi(\boldsymbol 0)=0$. The first variation of the length functional at $h_i=0$ is $\delta\tilde{L}_i = -\int_{\Gamma_i}\kappa_i\varphi_i\,d\sigma$, where $\varphi_i$ is a tangent vector. Consequently,
\[
\bigl(D\Xi(0)\boldsymbol\varphi\bigr)_k = -\int_{\Gamma_{k+1}}\kappa_{k+1}\varphi_{k+1}\,d\sigma + \int_{\Gamma_1}\kappa_1\varphi_1\,d\sigma,\qquad k=1,\dots,N-1.
\]
Since each $\kappa_{k+1}$ is not identically zero and every element of $T_0\mathcal{H}$ satisfies $\int_{\Gamma_i}\varphi_i\,d\sigma=0$, the derivative $D\Xi(0)$ is surjective. Choose an $(N-1)$-dimensional complementary subspace $Z\subset T_0\mathcal{H}$ such that $D\Xi(0)|_Z$ is invertible, and consider
\[
G: \ker D\Xi(0) \times Z \to \mathbb{R}^{N-1},\qquad G(v,z)=\Xi(v+z).
\]
Then $G(0,0)=0$ and $D_zG(0,0)=D\Xi(0)|_Z$ is invertible. By the implicit function theorem in Banach spaces \cite{zeidler1993nonlinear}, there exists an open neighbourhood $V\subset\ker D\Xi(0)$ of $0$ and a unique smooth map $\mathbf{z}:V\to Z$ with $\mathbf{z}(0)=0$ such that $\Xi(v+\mathbf{z}(v))=0$ for all $v\in V$. As in Lemma \ref{2.4}, it yields both nontriviality and smallness of the solutions.
\end{proof}

The equal-length condition is merely a technical convenience; it is not necessary for the smoothness of the global velocity field at the boundaries between adjacent building blocks. Smooth patching could be achieved by other boundary adjustments, albeit at the expense of increased technical complexity.

\subsection{Construction and stability of the perturbed area-preserving map}

In this subsection we construct the perturbed area-preserving map $\tilde{\Phi}_i$ compatible with $\tilde{\Gamma}_i$ and prove its $C^2$ stability, completing Lemma \ref{2.1}.

\subsubsection{Definition of the perturbed area-preserving map}

Recall the original area-preserving map $\Phi_i(s,y)=\gamma_i(s)+\beta_i(s,y)\eta_i(s)$, where $\beta_i$ solves $\beta_i-\frac{\kappa_i}{2}\beta_i^2=y/l$ \cite{alberti2019exponential}. For the perturbed curve $\tilde{\gamma}_i=\gamma_i+h_i\eta_i$, we define analogously
\begin{equation}
    \tilde{\Phi}_i(s,y) = \tilde{\gamma}_i(s) + \tilde{\beta}_i(s,y)\tilde{\eta}_i(s),\qquad y\in(-r,r),
    \label{2-24}
\end{equation}
with $\tilde{\eta}_i$ the unit normal and $\tilde{\beta}_i$ defined implicitly by $\tilde{\beta}_i - \frac{\tilde{\kappa}_i}{2}\tilde{\beta}_i^2 = \frac{y}{\tilde{l}}$ and $\tilde{\beta}_i(s,0)=0$, where $\tilde{l}=|\partial_s\tilde{\gamma}_i|$ and $\tilde{\kappa}_i$ is the perturbed curvature. The solution is
\begin{equation}
    \tilde{\beta}_i(s,y) = \frac{1 - \sqrt{1 - 2\tilde{\kappa}_i y/\tilde{l}}}{\tilde{\kappa}_i},
    \label{2-26}
\end{equation}
and the Jacobian readily verifies $\det(\partial_s\tilde{\Phi}_i,\partial_y\tilde{\Phi}_i)=1$.

\subsubsection{$C^2$ stability of the area-preserving map}

This subsection proves Lemma \ref{2.1}. To estimate $\tilde{\Phi}_i - \Phi_i$ and its derivatives, we decompose the difference as
\begin{equation}
    \tilde{\Phi}_i - \Phi_i = h_i\eta_i + (\tilde{\beta}_i - \beta_i)\tilde{\eta}_i + \beta_i(\tilde{\eta}_i - \eta_i).
    \label{2-27}
\end{equation}

We first establish $C^2$ stability estimates for the geometric quantities. The constant $C$ depends only on the original geometric structure (such as the $C^2$ norm of $\kappa_i$, the tubular radius $r$, and the parametrisation $l(t)$) and is independent of $\epsilon$ and $t$.

\begin{lemma}\label{2.6}
Assume $h_i\in C^\infty([0,1];C^6([0,L]))$ satisfies $\|h_i\|_{C^\infty([0,1]; C^6([0,L]))} \le \epsilon \le \epsilon_0$. Then there exists a constant $C>0$ depending only on the original geometry such that the following $C^2$ stability estimates hold:
\begin{equation}
    |\tilde{l} - l|_{C^2} \le C\epsilon,\qquad
    |\tilde{\eta}_i - \eta_i|_{C^2} \le C\epsilon,\qquad
    |\tilde{\kappa}_i - \kappa_i|_{C^2} \le C\epsilon.
    \label{2-28}
\end{equation}
\end{lemma}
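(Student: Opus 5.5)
The estimates in Lemma \ref{2.6} will follow from explicit formulas for $\tilde l$, $\tilde\eta_i$ and $\tilde\kappa_i$ as smooth nonlinear functions of the jet $(h_i,\partial_s h_i,\partial_s^2 h_i)$ and of the original geometric data $(l,\kappa_i,\tau_i,\eta_i)$. We then use the mean value theorem in the jet variables. Fix $t$ and work in $s\in[0,L]$; $C^2$ norms are in $s$, and uniformity in $t$ comes from the uniform smoothness of the original family on $[0,1]$. From $\partial_s\tilde\gamma_i = l(1-h_i\kappa_i)\tau_i+(\partial_s h_i)\eta_i$ we get
\[
\tilde l = \sqrt{l^2(1-\kappa_i h_i)^2+(\partial_s h_i)^2}=:\mathcal L(s,h_i,\partial_s h_i),\qquad
\tilde\eta_i = \frac{l(1-\kappa_i h_i)\eta_i-(\partial_s h_i)\tau_i}{\tilde l},
\]
using $\tau_i^\perp=\eta_i$ and $\eta_i^\perp=-\tau_i$. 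For the curvature, we differentiate $\tilde\tau_i$ and use the Frenet relation $\partial_s\tilde\tau_i=-\tilde l\tilde\kappa_i\tilde\eta_i$. This gives $\tilde\kappa_i = -\det(\partial_s\tilde\gamma_i,\partial_s^2\tilde\gamma_i)/\tilde l^{3}$, up to the sign convention fixed by the paper's Frenet formula. Here $\partial_s^2\tilde\gamma_i$ is a linear combination of $\tau_i,\eta_i$ whose coefficients are polynomial in $h_i,\partial_s h_i,\partial_s^2 h_i$, with coefficients involving $l,\kappa_i,\partial_s\kappa_i$.

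The first step is to ensure nondegeneracy. Since $l(t)\ge l_0>0$ on $[0,1]$ and $|\kappa_i h_i|+|\partial_s h_i|\le C\epsilon$, choosing $\epsilon_0$ small gives $\tilde l\ge l_0/2$. Consequently $\mathcal L$, $1/\tilde l$ and $\tilde l^{-3}$ are smooth functions of $(s,p_0,p_1,p_2)$ on the compact set $\{|p_j|\le\epsilon_0\}$, with all derivatives bounded by constants depending only on the original geometry.

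The second step writes each quantity as $Q(s,J^2h_i)$ with $Q$ smooth and $Q(s,0)$ equal to the original quantity. Then
\[
\tilde Q-Q=\int_0^1 \nabla_pQ(s,\lambda J^2h_i)\,d\lambda\cdot J^2h_i .
\]
Differentiating twice in $s$ by the chain and Leibniz rules produces terms involving at most $\partial_s^{2+2}h_i=\partial_s^4 h_i$ for the curvature, and at most $\partial_s^3h_i$ for $\tilde l$ and $\tilde\eta_i$. Each term carries at least one factor of a derivative of $h_i$ of order at most $4$, multiplied by bounded quantities. The bounded quantities involve at most $\partial_s^2\kappa_i$, $\partial_s^2 l$ (here $l$ is $s$-independent) and $\partial_s^2\tau_i,\partial_s^2\eta_i$, which are controlled by $\|\kappa_i\|_{C^1}$. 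Since $\|h_i\|_{C^6}\le\epsilon$, every such term is bounded by $C\epsilon$, which gives \eqref{2-28}. The first-order expansions \eqref{2-13}--\eqref{2-18} are consistent with this and identify the leading terms, but the argument itself does not need them.

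The main obstacle is the curvature estimate. It loses two derivatives, so its $C^2$ norm needs $\partial_s^4h_i$ and third derivatives of the original curve, i.e.\ $\kappa_i\in C^2$. This is exactly why the hypothesis is $C^6$ rather than $C^2$ regularity, and why $C$ depends on $\|\kappa_i\|_{C^2}$. Care is also needed at the endpoints $s=0,L$. There the boundary conditions (ii) make $J^2h_i$ vanish, so the perturbed quantities coincide with the originals, and no extra difficulty arises.
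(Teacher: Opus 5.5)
Your argument is correct, but it is organized differently from the paper's. The paper works from the linearizations \eqref{2-13}--\eqref{2-18}. It writes $\delta l=-lh_i\kappa_i+\frac{1}{2l}(\partial_s h_i)^2+O(\|h_i\|_{C^1}^3)$, $\delta\kappa_i=\kappa_i^2h_i+\partial_s^2h_i/l^2+O(\|h_i\|_{C^2}^2)$ and $\delta\eta_i=-\frac{\partial_s h_i}{l}\tau_i+O(\|h_i\|_{C^1}^2)$. It then reads off the $C^2$ bounds from the explicit leading terms, getting the same derivative counts as you: $\|h_i\|_{C^3}$ for $\tilde l,\tilde\eta_i$ and $\|h_i\|_{C^4}$ for $\tilde\kappa_i$. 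Those remainders are only recorded in sup-norm form, so differentiating them twice in $s$ is not really justified there. The paper's Frenet sign and its $l$ versus $l^2$ normalisation also shift between the introduction and the proof.

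Your route has three ingredients: exact formulas $Q(s,J^2h_i)$ with $Q(s,0)$ the unperturbed quantity, the uniform lower bound $\tilde l\ge l_0/2$, and $\tilde Q-Q=\int_0^1\nabla_pQ(s,\lambda J^2h_i)\,d\lambda\cdot J^2h_i$. This controls the linear part and the nonlinear remainder in $C^2$ at once and is insensitive to sign conventions, so it is the cleaner and more rigorous proof. What it gives up is the identification of the leading-order terms; the paper uses these descriptively, but they are not needed for the estimate.

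One bookkeeping correction: $\tilde\kappa_i$ depends explicitly on $\partial_s\kappa_i$. The $\tau_i$-component of $\partial_s^2\tilde\gamma_i$ contains $-l\,h_i\,\partial_s\kappa_i$, which pairs with $\partial_s h_i$ in the determinant to give a term $\pm l\,h_i\,\partial_s h_i\,\partial_s\kappa_i/\tilde l^{3}$. Two $s$-derivatives of this produce $\partial_s^3\kappa_i$, so your constant depends on $\|\kappa_i\|_{C^3}$, not on $\|\kappa_i\|_{C^2}$ as you (and the paper) state. This is harmless because the original curves are smooth. Similarly, only $C^4$ control of $h_i$ is actually used, not $C^6$.
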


\begin{proof}
Fix $t\in[0,1]$. Since $\tilde{l}=l\mathcal{J}$ with
\[
\mathcal{J}=\sqrt{(1-h_i\kappa_i)^2+\bigl(\tfrac{\partial_s h_i}{l}\bigr)^2}
=1-h_i\kappa_i+\tfrac12\bigl(\tfrac{\partial_s h_i}{l}\bigr)^2+O(\|h_i\|_{C^1}^3).
\]
Denote $\delta l = \tilde{l} - l = -lh_i\kappa_i+\frac{1}{2l}(\partial_s h_i)^2+O(\|h_i\|_{C^1}^3)$, hence $|\tilde{l}-l|_{C^2}\le C|h_i|_{C^3}\le C\epsilon$.

The first-order variation gives
\[
\tilde{\kappa}_i=\kappa_i+\kappa_i^2h_i+\frac{\partial_s^2h_i}{l^2}+O(\|h_i\|_{C^2}^2),
\]
so $\delta\kappa_i=\tilde{\kappa}_i-\kappa_i = \kappa_i^2h_i+\frac{\partial_s^2h_i}{l^2}+O(\|h_i\|_{C^2}^2)$. Consequently $|\tilde{\kappa}_i-\kappa_i|_{C^2}\le C|h_i|_{C^4}\le C\epsilon$.

From $\tilde{\eta}_i=\eta_i-\frac{\partial_s h_i}{l}\tau_i+O(\|h_i\|_{C^1}^2)$ we obtain $\delta\eta_i=-\frac{\partial_s h_i}{l}\tau_i+O(\|h_i\|_{C^1}^2)$. Differentiating and using the Frenet formula $\partial_s\tau_i=l\kappa_i\eta_i$ yields
\[
\partial_s\delta\eta_i = -\frac{\partial_s^2h_i}{l}\tau_i-\partial_sh_i\kappa_i\eta_i+O(\|h_i\|_{C^1}^2),
\]
for the second derivative. Since $\tau_i,\eta_i,\kappa_i$ are smooth and bounded,
\[
|\delta\eta_i|_{C^2}\le \tfrac1l|h|_{C^3}+C|h_i|_{C^2}\le C|h_i|_{C^6}\le C\epsilon.
\]

All constants $C$ depend only on the original geometry (e.g., $\inf l$, $\|\kappa_i\|_{C^2}$, $r$) and are independent of $\epsilon$.
\end{proof}

\begin{lemma}\label{2.7}
Under the assumptions of Lemma \ref{2.1}, for every $t\in[0,1]$ there exists a constant $C>0$ depending only on the original geometry such that
\begin{equation}
    \|\tilde{\beta}_i - \beta_i\|_{C^2([0,L]\times(-r,r))} \le C\epsilon.
    \label{2-29}
\end{equation}
\end{lemma}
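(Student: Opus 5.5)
We will treat $\tilde\beta_i$ and $\beta_i$ as values of one smooth function of three variables, and get the $C^2$ bound from the mean value theorem together with the chain rule. Let
\[
B(\kappa,\lambda,y)=\frac{1-\sqrt{1-2\kappa y/\lambda}}{\kappa}=\frac{2y/\lambda}{1+\sqrt{1-2\kappa y/\lambda}},
\]
using the second, rationalised form, which is smooth in a neighbourhood of $\kappa=0$ as well. Then $\beta_i(s,y)=B(\kappa_i(s),l,y)$ and $\tilde\beta_i(s,y)=B(\tilde\kappa_i(s),\tilde l(s),y)$. The original construction of \cite{alberti2019exponential} requires the tube to be nondegenerate, that is,
\[
1-2\|\kappa_i\|_{L^\infty}\,r/\inf l\ \ge\ 2c_0>0 .
\]
By Lemma \ref{2.6}, $\|\tilde\kappa_i-\kappa_i\|_{C^0}+\|\tilde l-l\|_{C^0}\le C\epsilon$. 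So for $\epsilon\le\epsilon_0$ small, every point $(\kappa,\lambda,y)$ on the segment joining $(\kappa_i(s),l,y)$ to $(\tilde\kappa_i(s),\tilde l(s),y)$ lies in a fixed compact set
\[
K=\{|\kappa|\le \|\kappa_i\|_\infty+1,\ \lambda\ge \tfrac12\inf l,\ |y|\le r,\ 1-2\kappa y/\lambda\ge c_0\}.
\]
On $K$ the function $B$ is $C^\infty$, and all its derivatives up to order $3$ are bounded by a constant depending only on the original geometry.

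The key steps, in order, are these.

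\textbf{(a) $C^0$ bound.} By the mean value theorem,
\[
|\tilde\beta_i-\beta_i|\le \|\nabla_{\kappa,\lambda}B\|_{L^\infty(K)}\bigl(|\tilde\kappa_i-\kappa_i|+|\tilde l-l|\bigr)\le C\epsilon .
\]

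\textbf{(b) First derivatives.} We have
\[
\partial_s\tilde\beta_i=B_\kappa(\tilde\kappa_i,\tilde l,y)\,\partial_s\tilde\kappa_i+B_\lambda(\tilde\kappa_i,\tilde l,y)\,\partial_s\tilde l ,
\]
and similarly for $\beta_i$ (where $\partial_s l=0$). Each difference splits as
\[
\bigl[B_\kappa(\tilde\cdot)-B_\kappa(\cdot)\bigr]\partial_s\tilde\kappa_i+B_\kappa(\cdot)\,\partial_s(\tilde\kappa_i-\kappa_i).
\]
The first term is $O(\epsilon)$ by the mean value theorem applied to $B_\kappa$ on $K$, together with the bound $|\partial_s\tilde\kappa_i|\le C$. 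The second term is $O(\epsilon)$ by Lemma \ref{2.6}. The $y$-derivative $\partial_y\tilde\beta_i-\partial_y\beta_i=B_y(\tilde\cdot)-B_y(\cdot)$ is handled the same way.

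\textbf{(c) Second derivatives.} We expand $\partial_s^2$, $\partial_s\partial_y$ and $\partial_y^2$ by the chain rule and apply the same telescoping. Each term is a product of derivatives of $B$ of order at most $3$, evaluated on $K$, and $s$-derivatives of $(\tilde\kappa_i,\tilde l)$ of order at most $2$. Every difference either carries a factor $|(\tilde\kappa_i,\tilde l)-(\kappa_i,l)|$, or carries a difference of the form $\partial_s^j(\tilde\kappa_i-\kappa_i)$ or $\partial_s^j(\tilde l-l)$ with $j\le2$. Lemma \ref{2.6} gives exactly these up to $C^2$, so
\[
\|\tilde\beta_i-\beta_i\|_{C^2([0,L]\times(-r,r))}\le C\epsilon .
\]
Since the constants in Lemma \ref{2.6} are uniform in $t$, so is $C$.

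The main obstacle is keeping the square root uniformly away from its branch point, so that $K$ is a compact set on which $B$ is smooth. This needs the tube nondegeneracy $2\|\kappa_i\|r/l<1$ for the original family, with a margin $c_0$ that absorbs the $O(\epsilon)$ change in $\tilde\kappa_i$ and $\tilde l$; it is this that fixes $\epsilon_0$. A second point is the possibility that $\kappa_i$ vanishes somewhere. The rationalised formula removes the apparent singularity at $\kappa=0$, so no case split is needed there.
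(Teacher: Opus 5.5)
Your proof is correct, but it takes a different route from the paper. The paper does not use the closed form \eqref{2-26} in this proof. It subtracts the implicit relations for $\tilde\beta_i$ and $\beta_i$ and isolates the linear part to obtain \eqref{2-31}, namely $(1-\kappa_i\beta_i)\,\delta\beta_i=\frac12\beta_i^2\delta\kappa_i+y\,\delta(1/l)+\frac12\mathcal{Q}_i$, where $\mathcal{Q}_i$ is quadratic in $(\delta\beta_i,\delta\kappa_i)$. It then produces $\delta\beta_i$ as the fixed point of the resulting map $\zeta$, via a contraction argument on a $C^2$-ball of radius $R\sim\epsilon$; Lemma~\ref{2.6} supplies $\|\delta\kappa_i\|_{C^2}+\|\delta(1/l)\|_{C^2}\le C\epsilon$. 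You instead treat $\beta_i$ and $\tilde\beta_i$ as one explicit smooth function $B(\kappa,\lambda,y)$ evaluated at $O(\epsilon)$-close arguments, and conclude by the chain rule and the mean value theorem, using the same input from Lemma~\ref{2.6}.

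Both arguments rest on the same hypothesis. Since $1-\kappa\beta=\sqrt{1-2\kappa y/l}$, the paper's uniform requirement $1-\kappa_i\beta_i>0$ is exactly your margin $1-2\kappa y/\lambda\ge c_0$ from the branch point.

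Your route is more direct. It needs no check that $\zeta$ maps the ball into itself and is a strict contraction in $C^2$; the paper only records a Lipschitz bound $\le 1$. It also skips the implicit step of identifying the fixed point in the ball with the true $\tilde\beta_i-\beta_i$, i.e.\ selecting the correct root of the quadratic. It makes clear what fixes $\epsilon_0$, and the rationalised formula handles points where $\kappa_i$ vanishes. The paper's linearisation, on the other hand, does not need the relation to be solvable in closed form. It would therefore survive if the defining relation for $\beta$ had no explicit solution, and it exhibits the first-order correction $\delta\beta_i\approx(\frac12\beta_i^2\delta\kappa_i+y\,\delta(1/l))/(1-\kappa_i\beta_i)$.

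One small slip: your set $K$ is not compact as written, since $\lambda$ is unbounded above. Either add $\lambda\le 2\sup l$, which $\tilde l$ satisfies for small $\epsilon$, or note that the derivative bounds on $B$ hold on $K$ anyway because $y/\lambda$ and its $\lambda$-derivatives stay bounded there.
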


\begin{proof}

Fix $t$. Subtracting the equations for $\beta_i$ and $\tilde{\beta}_i$ gives
\[
\delta\beta_i - \tfrac12(\tilde{\kappa}_i\tilde{\beta}_i^2-\kappa_i\beta_i^2) = y\,\delta(1/l),
\]
with $\delta\beta_i=\tilde{\beta}_i-\beta_i$, $\delta\kappa_i=\tilde{\kappa}_i-\kappa_i$, $\delta(1/l)=1/\tilde{l}-1/l$. 
Expanding $\tilde{\kappa}_i\tilde{\beta}_i^2$ gives
\[
\tilde{\kappa}_i\tilde{\beta}_i^2 = \kappa_i\beta_i^2 + 2\kappa_i\beta_i\delta\beta_i + \beta_i^2\delta\kappa_i + \mathcal{Q}_i,
\]
where $\mathcal{Q}_i := 2\beta_i\delta\beta_i\delta\kappa_i + \kappa_i(\delta\beta_i)^2 + \delta\kappa_i(\delta\beta_i)^2$.
Substituting into the first equality yields
\[
\delta\beta_i - \frac12\bigl(2\kappa_i\beta_i\delta\beta_i + \beta_i^2\delta\kappa_i + \mathcal{Q}_i\bigr) = y\,\delta(1/l),
\]
which can be rearranged as
\begin{equation}
    (1 - \kappa_i\beta_i)\delta\beta_i = \frac12\beta_i^2\delta\kappa_i + y\,\delta(1/l) + \frac12\mathcal{Q}_i. \label{2-31}
\end{equation}

Since $1-\kappa_i\beta_i>0$ (the tubular radius $r$ is sufficiently small), it yields
\begin{equation}
    \delta\beta_i = \frac{\frac12\beta_i^2\delta\kappa_i + y\,\delta(1/l) + \frac12\mathcal{Q}_i}{1 - \kappa_i\beta_i} =: \zeta(\delta\beta_i). \label{2-32}
\end{equation}
From Lemma \ref{2.6} we have $\|\delta\kappa_i\|_{C^2} + \|\delta(1/l)\|_{C^2} \le C_1\epsilon$, and the boundedness of $\beta_i$ implies the existence of a constant $C_2$ such that
\[
\|\mathcal{Q}_i\|_{C^2} \le C_2\bigl(\|\delta\beta_i\|_{C^2}\|\delta\kappa_i\|_{C^2} + \|\delta\beta_i\|_{C^2}^2\bigr) \le C_3\epsilon\|\delta\beta_i\|_{C^2} + C_3\|\delta\beta_i\|_{C^2}^2.
\]
Hence $\|\zeta(\delta\beta_i)\|_{C^2}\le C_4(\epsilon+\epsilon R+R^2)$ for $\|\delta\beta_i\|_{C^2}\le R$. Choosing $R = 2C_4\epsilon$ ensures $\|\zeta(\delta\beta_i)\|_{C^2} \le R$ for sufficiently small $\epsilon$. Furthermore, for any $\|\delta\beta_i^{(j)}\|_{C^2} \le R$ and $\epsilon$ sufficiently small,
\[
\|\zeta(\delta\beta_i^{(1)}) - \zeta(\delta\beta_i^{(2)})\|_{C^2} \le \|\delta\beta_i^{(1)} - \delta\beta_i^{(2)}\|_{C^2}.
\]
Thus $\zeta$ is a contraction mapping on a closed ball, and the contraction mapping principle guarantees the existence of a unique fixed point. Hence
\[
\|\tilde{\beta}_i - \beta_i\|_{C^2} \le C\epsilon.
\]
\end{proof}

Lemmas \ref{2.6} and \ref{2.7} together complete the proof of Lemma \ref{2.1}.

\begin{proof}[Proof of Lemma \ref{2.1}]
Substituting the estimates from Lemmas \ref{2.6} and \ref{2.7} into the decomposition \eqref{2-27}, and employing the boundedness of $\|\beta_i\|_{C^2}$ and $\|\tilde{\eta}_i\|_{C^2}$, we obtain
\[
\|\tilde{\Phi}_i - \Phi_i\|_{C^2} \le \|h_i\eta_i\|_{C^2} + \|\tilde{\beta}_i - \beta_i\|_{C^2}\|\tilde{\eta}_i\|_{C^2} + \|\beta_i\|_{C^2}\|\tilde{\eta}_i - \eta_i\|_{C^2} \le C\epsilon,
\]
where $C$ depends only on the original geometric quantities and is independent of $\epsilon$.
\end{proof}

\subsubsection{$C^1$ stability of the inverse maps under perturbation}

Let $\Phi_i$ and $\tilde{\Phi}_i$ be the original and perturbed area-preserving maps defined with tubular neighbourhoods $U_i$ and $\tilde{U}_i$, respectively. From Lemma \ref{2.1} we have $\|\Phi_i - \tilde{\Phi}_i\|_{C^2} \le C\epsilon$, and $\Phi_i$ is a $C^2$ diffeomorphism. Denote by $\Psi_i = \Phi_i^{-1}: U_i \to [0,L]\times(-r,r)$ and $\tilde{\Psi}_i = \tilde{\Phi}_i^{-1}: \tilde{U}_i \to [0,L]\times(-r,r)$ the corresponding inverse maps, with gradients
\[
\nabla\Psi_i(x) = \begin{pmatrix} \nabla s(x) \\ \nabla y(x) \end{pmatrix},\qquad
\nabla\tilde{\Psi}_i(x) = \begin{pmatrix} \nabla\tilde{s}(x) \\ \nabla\tilde{y}(x) \end{pmatrix}.
\]

\begin{lemma}\label{2.8}
There exists a constant $C>0$ depending only on the original geometry such that for every point $x \in U_i \cap \tilde{U}_i$,
\begin{equation}
    \|\tilde{\Psi}_i - \Psi_i\|_{C^1(U_i \cap \tilde{U}_i)} \le C\epsilon.
    \label{2-33}
\end{equation}
\end{lemma}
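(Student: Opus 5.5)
The plan is to derive both bounds, $C^0$ and $C^1$, from Lemma \ref{2.1} together with uniform nondegeneracy of $D\Phi_i$. Three facts are used throughout:
\begin{itemize}
\item[(a)] $\det D\Phi_i=\det D\tilde\Phi_i=1$;
\item[(b)] $\|\Phi_i\|_{C^2}\le C_0$, with $C_0$ depending only on the original geometry;
\item[(c)] $\|\tilde\Phi_i-\Phi_i\|_{C^2}\le C\epsilon$ by Lemma \ref{2.1}.
\end{itemize}
Since a $2\times 2$ matrix with unit determinant has inverse equal to its adjugate, $|(D\Phi_i)^{-1}|=|D\Phi_i|\le C_0$. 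Also $|(D\tilde\Phi_i)^{-1}|\le C_0+C\epsilon$. Hence $\Psi_i$ and $\tilde\Psi_i$ are Lipschitz with constants depending only on the geometry, uniformly in $\epsilon\le\epsilon_0$.

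\emph{Step 1 ($C^0$ bound).} Fix $x\in U_i\cap\tilde U_i$ and set $z=\Psi_i(x)$, $\tilde z=\tilde\Psi_i(x)$. Then $\Phi_i(z)=x=\tilde\Phi_i(\tilde z)$, so
\[
\Phi_i(\tilde z)-\Phi_i(z)=\Phi_i(\tilde z)-\tilde\Phi_i(\tilde z),
\]
and the right-hand side has size at most $C\epsilon$. Applying $\Psi_i$, which is Lipschitz, gives $|\tilde z-z|\le C\epsilon$. Two points require care here. First, $\Phi_i(\tilde z)$ must lie in $U_i$; for $\epsilon$ small it does, because it is within $C\epsilon$ of $x$. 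Second, the Lipschitz bound on $\Psi_i$ needs a convexity or injectivity-radius argument, since $U_i$ is a curved tube. I would avoid both issues by shrinking the tube: restrict to $y\in(-r',r')$ with $r'<r$. Alternatively, one can argue in the parameter domain $[0,L]\times(-r,r)$, which is convex, by writing
\[
z-\tilde z=\int_0^1 D\Phi_i\bigl(\tilde z+\lambda(z-\tilde z)\bigr)\,d\lambda\;^{-1}\bigl(\tilde\Phi_i(\tilde z)-\Phi_i(\tilde z)\bigr).
\]
Here the averaged matrix is $C\epsilon$-close to $D\Phi_i(z)$ once a crude a priori smallness of $|z-\tilde z|$ is known. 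That crude smallness follows from injectivity of $\Phi_i$ on the closed tube together with compactness and uniform continuity.

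\emph{Step 2 ($C^1$ bound).} Use $D\Psi_i(x)=(D\Phi_i(z))^{-1}$ and $D\tilde\Psi_i(x)=(D\tilde\Phi_i(\tilde z))^{-1}$, and split
\[
(D\tilde\Phi_i(\tilde z))^{-1}-(D\Phi_i(z))^{-1}
=\Bigl[(D\tilde\Phi_i(\tilde z))^{-1}-(D\Phi_i(\tilde z))^{-1}\Bigr]+\Bigl[(D\Phi_i(\tilde z))^{-1}-(D\Phi_i(z))^{-1}\Bigr].
\]
Because of the unit determinant, inversion is the linear adjugate map, so each bracket is the adjugate of a difference of Jacobians. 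The first bracket is bounded by $\|\tilde\Phi_i-\Phi_i\|_{C^1}\le C\epsilon$. The second is bounded by $\|\Phi_i\|_{C^2}|\tilde z-z|\le C\epsilon$ using Step 1. This is where the full $C^2$ control of Lemma \ref{2.1} is needed, not only $C^1$: one derivative is lost in passing to inverses.

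\emph{Main obstacle.} The difficulty is not the algebra but the domain issues in Step 1. One must ensure that comparing points across $U_i\cap\tilde U_i$ is legitimate, meaning the $C\epsilon$-displaced points stay inside the region where $\Phi_i$ is a diffeomorphism with uniform bounds, and that near the ends $s=0,L$ nothing escapes. I would handle this first by working on a slightly shrunken parameter domain. I would then use the boundary conditions (ii), under which $h_i$ and its derivatives vanish at $s=0,L$, so that $\tilde\Phi_i=\Phi_i$ near the ends. This makes the endpoint regions trivial.
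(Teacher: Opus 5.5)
Your proof is correct and follows the same skeleton as the paper's. Both get a $C^0$ bound from $\Phi_i(\Psi_i(x))=x=\tilde\Phi_i(\tilde\Psi_i(x))$ and a Lipschitz inverse, then a $C^1$ bound from $\nabla\Psi_i=(\nabla\Phi_i\circ\Psi_i)^{-1}$ and a two-term telescoping. What differs is which map carries the analysis.

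The paper inverts with the perturbed map, writing $|\tilde p-p|\le\|(\nabla\tilde\Phi_i)^{-1}\|_{L^\infty}|\tilde\Phi_i(\tilde p)-\tilde\Phi_i(p)|$. It then telescopes through $\nabla\tilde\Phi_i(\Psi_i(x))$. This uses the Lipschitz constant of $\nabla\tilde\Phi_i$, hence the $C^2$ closeness in Lemma~\ref{2.1}, together with the general inversion bounds of Lemmas A.1--A.2.

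You invert with the fixed map $\Psi_i$ and telescope through $D\Phi_i(\tilde z)$. So every second-derivative and injectivity constant comes from the unperturbed $\Phi_i$, and the identity $A^{-1}-B^{-1}=\operatorname{adj}(A-B)$ for unimodular $2\times2$ matrices replaces the Neumann series. As a result your route needs only $\|\tilde\Phi_i-\Phi_i\|_{C^1}\le C\epsilon$ plus $\|\Phi_i\|_{C^2}$. Your remark that the full $C^2$ control of Lemma~\ref{2.1} is needed describes the paper's split, not yours.

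Your Step 1 is also more rigorous than the paper's. The paper's one-line bound tacitly treats $\tilde\Psi_i$ as globally Lipschitz on the curved tube $\tilde U_i$. That is exactly the non-convexity issue you settle by working in the convex parameter rectangle, with an a priori smallness from injectivity and compactness. There the averaged Jacobian is $C|z-\tilde z|$-close to $D\Phi_i(z)$ rather than $C\epsilon$-close, but the conclusion is unchanged.

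Two side remarks are inaccurate, though nothing depends on them:
\begin{itemize}
\item $\Phi_i(\tilde z)\in U_i$ holds simply because $\tilde z$ lies in the parameter rectangle, not because it is $C\epsilon$-close to $x$.
\item Condition (ii) only makes $h_i$ vanish to sixth order at $s=0,L$. So $\tilde\Phi_i=\Phi_i$ on the two end segments but not on neighbourhoods of them. Your parameter-domain argument never needs the ends treated separately anyway.
\end{itemize}
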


\begin{proof}
For any $x\in U_i\cap\tilde{U}_i$, set $p=\Psi_i(x)$ and $\tilde{p}=\tilde{\Psi}_i(x)$. From $\Phi_i(p)=x$ and $\tilde{\Phi}_i(\tilde{p})=x$, together with Lemma \ref{2.1}, we obtain
\[
|\tilde{p} - p| \le \|(\nabla\tilde{\Phi}_i)^{-1}\|_{L^\infty} \cdot |\tilde{\Phi}_i(\tilde{p}) - \tilde{\Phi}_i(p)| \le C|\Phi_i(p) - \tilde{\Phi}_i(p)| \le C\epsilon,
\]
which yields the $L^\infty$ estimate.

For the gradients, the chain rule gives
\[
\nabla\tilde{\Psi}_i(x) = [\nabla\tilde{\Phi}_i(\tilde{\Psi}_i(x))]^{-1},\qquad \nabla\Psi_i(x) = [\nabla\Phi_i(\Psi_i(x))]^{-1}.
\]
Hence
\[
\nabla\tilde{\Psi}_i-\nabla\Psi_i = \bigl([\nabla\tilde{\Phi}_i(\tilde{\Psi}_i)]^{-1}-[\nabla\tilde{\Phi}_i(\Psi_i)]^{-1}\bigr) + \bigl([\nabla\tilde{\Phi}_i(\Psi_i)]^{-1}-[\nabla\Phi_i(\Psi_i)]^{-1}\bigr).
\]
The first term is estimated using the Lipschitz property of matrix inversion (Lemma A.1), which together with the Lipschitz continuity of $\nabla\tilde{\Phi}_i$ yields
\[
\|[\nabla\tilde{\Phi}_i(\tilde{\Psi}_i)]^{-1} - [\nabla\tilde{\Phi}_i(\Psi_i)]^{-1}\| \le K^2 L \|\tilde{\Psi}_i - \Psi_i\|,
\]
where $K$ is a uniform upper bound for $\|(\nabla\tilde{\Phi}_i)^{-1}\|$ and $L$ is the Lipschitz constant of $\nabla\tilde{\Phi}_i$. Applying the $L^\infty$ estimate $|\tilde{\Psi}_i - \Psi_i| \le C\epsilon$ obtained above, this term is bounded by $C\epsilon$.For the second term, Lemma A.2 gives
\[
\|[\nabla\tilde{\Phi}_i(\Psi_i)]^{-1} - [\nabla\Phi_i(\Psi_i)]^{-1}\| \le K^2 \|\nabla\tilde{\Phi}_i(\Psi_i) - \nabla\Phi_i(\Psi_i)\|.
\]
From Lemma~\ref{2.1}, $\|\nabla\tilde{\Phi}_i-\nabla\Phi_i\|_{C^0}\le C\epsilon$, so this term is also $O(\epsilon)$. Hence $\|\nabla\tilde{\Psi}_i-\nabla\Psi_i\|_{L^\infty}\le C\epsilon$, and together with the $L^\infty$ estimate of $\tilde{\Psi}_i-\Psi_i$ we establish \eqref{2-33}.
\end{proof}

\subsection{Stability of the local velocity and scalar fields}

This subsection proves Lemma \ref{2.2}. Recall the neighbourhoods $U_i$ and $\tilde{U}_i$ from the previous subsection, we define the original and perturbed local velocity fields by
\[
V_i(x) = \partial_t \Phi_i(\Psi_i(x)),\quad x\in U_i,\qquad 
\tilde{V}_i(x) = \partial_t \tilde{\Phi}_i(\tilde{\Psi}_i(x)),\quad x\in \tilde{U}_i.
\]
The original and perturbed scalar fields are defined analogously via the normal components $Y_i(x)$ and $\tilde{Y}_i(x)$:
\[
\Theta_i(x) = \bar{\Theta}\bigl(Y_i(x)/r\bigr),\quad x\in U_i,\qquad 
\tilde{\Theta}_i(x) = \bar{\Theta}\bigl(\tilde{Y}_i(x)/r\bigr),\quad x\in \tilde{U}_i,
\]
where $\bar{\Theta}$ is a fixed smooth cut-off function supported in $[-1/2,1/2]$ with $\int\bar{\Theta}=0$ and $\int\bar{\Theta}^2=1$.

We begin with two auxiliary lemmas.

\begin{lemma}\label{2.9}
There exists a constant $C>0$ depending only on the original geometry such that
\begin{equation}
    \|\partial_t \tilde{\Phi}_i - \partial_t \Phi_i\|_{C^1([0,L]\times(-r,r))} \le C\epsilon.
    \label{2-34}
\end{equation}
\end{lemma}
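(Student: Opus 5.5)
The plan is to differentiate the decomposition \eqref{2-27} in time and repeat, one derivative higher in $t$, the arguments of Lemmas \ref{2.6} and \ref{2.7}. Since $h_i\in C^\infty([0,1];C^6([0,L]))$ with $\|h_i\|_{C^\infty([0,1];C^6)}\le\epsilon$, every mixed derivative $\partial_t^k\partial_s^\varsigma h_i$ with $\varsigma\le 6$ is bounded by $\epsilon$. The original geometry $\gamma_i,\eta_i,\tau_i,\kappa_i,l$ is smooth in $(t,s)$. Writing
\[
\partial_t\tilde\Phi_i-\partial_t\Phi_i=\partial_t(h_i\eta_i)+\partial_t\bigl((\tilde\beta_i-\beta_i)\tilde\eta_i\bigr)+\partial_t\bigl(\beta_i(\tilde\eta_i-\eta_i)\bigr),
\]
it suffices to show that $\partial_t\delta l$, $\partial_t\delta\eta_i$, $\partial_t\delta\kappa_i$ and $\partial_t\delta\beta_i$ are $O(\epsilon)$ in $C^1$ in $(s,y)$. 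The Leibniz rule, together with the $C^2$ bounds of Lemmas \ref{2.6}, \ref{2.7} and the boundedness of $\beta_i,\partial_t\beta_i,\tilde\eta_i,\partial_t\tilde\eta_i$, then handles every product.

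\textbf{Step 1: time derivatives of the geometric quantities.} The quantities $\tilde l$, $\tilde\eta_i$ and $\tilde\kappa_i$ are smooth algebraic expressions in $h_i,\partial_sh_i,\partial_s^2h_i$ and the smooth original frame. Examples are $\tilde l=|\partial_s\tilde\gamma_i|$ and $\tilde\eta_i=(\partial_s\tilde\gamma_i)^\perp/|\partial_s\tilde\gamma_i|$, and $\tilde\kappa_i$ involves $\partial_s^2\tilde\gamma_i$. Each difference is therefore of the form $F(t,s,h,\partial_sh,\partial_s^2h)-F(t,s,0,0,0)$ with $F$ smooth, valid as long as $\tilde l$ stays bounded below, which holds for $\epsilon\le\epsilon_0$. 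Applying $\partial_t$ and then one $\partial_s$ produces terms that are either linear in some $\partial_t^k\partial_s^\varsigma h_i$ with $k\le1$ and $\varsigma\le 4$, or contain a factor $h$-derivative by the mean value theorem. This gives $\|\partial_t\delta l\|_{C^1}+\|\partial_t\delta\eta_i\|_{C^1}+\|\partial_t\delta\kappa_i\|_{C^1}\le C\epsilon$, which is exactly where the $C^\infty$-in-time hypothesis enters.

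\textbf{Step 2: $\partial_t\delta\beta_i$.} I would use the explicit formula \eqref{2-26} rather than the fixed-point argument. Write $\tilde\beta_i=B(\tilde\kappa_i,y/\tilde l)$ and $\beta_i=B(\kappa_i,y/l)$ with
\[
B(\kappa,z)=\frac{1-\sqrt{1-2\kappa z}}{\kappa}=\frac{2z}{1+\sqrt{1-2\kappa z}}.
\]
The second form shows that $B$ is smooth (indeed analytic) near $\kappa z=0$, uniformly including at $\kappa=0$, provided $2|\kappa|r/l\le 1/2$. This is guaranteed by the smallness of the tubular radius and by $\epsilon\le\epsilon_0$. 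Then $\partial_t\delta\beta_i=\partial_\kappa B\,\partial_t\tilde\kappa_i+\partial_zB\,\partial_t(y/\tilde l)$ minus the corresponding unperturbed expression. A further mean value theorem argument, combined with Step 1 and Lemma \ref{2.6}, bounds its $C^1$ norm in $(s,y)$ by $C\epsilon$.

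\textbf{Step 3: assembly, and the main obstacle.} Combining Steps 1--2 with Lemma \ref{2.7} in the Leibniz expansion yields \eqref{2-34}, with $C$ uniform in $t$ because all original quantities are smooth on the compact set $[0,1]\times[0,L]\times[-r,r]$. The main obstacle is uniformity of the square root in \eqref{2-26} and of the lower bound on $\tilde l$. Both are needed to keep the implicit and normalising operations smooth with bounds independent of $\epsilon$. I would handle them by fixing $\epsilon_0$ so that $\tilde l\ge\tfrac12\inf l$ and $2|\tilde\kappa_i|r/\tilde l\le\tfrac12$, using the $C^0$ parts of Lemma \ref{2.6}. A secondary point to check is the loss of derivatives: $\partial_s\partial_t\tilde\kappa_i$ needs $\partial_t\partial_s^3h_i$. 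This is well within the $C^6$ assumption.
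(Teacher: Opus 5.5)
Your proposal is correct and follows the paper's route: differentiate the decomposition \eqref{2-27} in time and bound each resulting piece using $\|h_i\|_{C^\infty([0,1];C^6)}\le\epsilon$. Your version is more complete than the paper's one-line proof, which cites Lemmas \ref{2.6}--\ref{2.7} even though those only control spatial derivatives at fixed $t$. Your smooth-dependence argument for $\partial_t\delta l$, $\partial_t\delta\eta_i$, $\partial_t\delta\kappa_i$ and the rationalised formula $B(\kappa,z)=2z/(1+\sqrt{1-2\kappa z})$ supply exactly the missing time-derivative bounds, and the rationalised formula also removes the apparent singularity of \eqref{2-26} at $\tilde\kappa_i=0$.
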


\begin{proof}
Employing the decomposition
\[
\tilde{\Phi}_i - \Phi_i = h_i\eta_i + (\tilde{\beta}_i - \beta_i)\tilde{\eta}_i + \beta_i(\tilde{\eta}_i - \eta_i),
\]
differentiating with respect to time and applying the estimates from Lemmas \ref{2.6} and \ref{2.7} together with the bound $\|h_i\|_{C^\infty([0,1]; C^6([0,L]))} \le \epsilon$, we obtain the stated estimate.
\end{proof}

\begin{lemma}\label{2.10}
Fix time $t\in[0,1]$. Define the Hausdorff distance between $U_i = \Phi_i([0,L]\times(-r,r))$ and $\tilde{U}_i = \tilde{\Phi}_i([0,L]\times(-r,r))$ by
\begin{equation}
    d_H(U_i,\tilde{U}_i) = \max\Bigl\{ \sup_{x\in U_i} \operatorname{dist}(x,\tilde{U}_i),\; \sup_{x\in\tilde{U}_i} \operatorname{dist}(x,U_i) \Bigr\}.
\end{equation}
Then there exists a constant $C>0$ depending only on the original geometry such that
\begin{equation}
    d_H(U_i,\tilde{U}_i) \le C\epsilon.
    \label{2-35}
\end{equation}
\end{lemma}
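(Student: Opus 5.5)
The estimate \eqref{2-35} is a direct consequence of the $C^0$ part of Lemma~\ref{2.1}, because $U_i$ and $\tilde U_i$ are images of the same parameter domain $D:=[0,L]\times(-r,r)$ under $\Phi_i$ and $\tilde\Phi_i$. We will bound each of the two one-sided suprema in the definition of $d_H$ by pairing points through their common parameter value $(s,y)$.

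First, let $x\in U_i$. Choose $(s,y)\in D$ with $x=\Phi_i(s,y)$; this is possible since $U_i=\Phi_i(D)$ by definition, and no injectivity is needed. The point $\tilde x:=\tilde\Phi_i(s,y)$ lies in $\tilde U_i$. By \eqref{2-9}, at the fixed time $t$,
\[
\operatorname{dist}(x,\tilde U_i)\le |\Phi_i(s,y)-\tilde\Phi_i(s,y)|\le \sup_{D}|\tilde\Phi_i-\Phi_i|\le \|\tilde\Phi_i-\Phi_i\|_{C^2(D)}\le C\epsilon .
\]
Taking the supremum over $x\in U_i$ gives $\sup_{x\in U_i}\operatorname{dist}(x,\tilde U_i)\le C\epsilon$. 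The symmetric argument bounds the other term: for $x\in\tilde U_i$ written as $x=\tilde\Phi_i(s,y)$, we compare with $\Phi_i(s,y)\in U_i$. Taking the maximum of the two bounds yields \eqref{2-35}. The constant is the one from Lemma~\ref{2.1}, so it depends only on the original geometry and is uniform in $t$.

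There is no real obstacle here. The only points to check are that both neighbourhoods use the same parameter domain $D$ (the same tubular radius $r$ and the same $s$-interval $[0,L]$), and that $\tilde\Phi_i$ is well defined on all of $D$. The latter requires $1-2\tilde\kappa_i y/\tilde l>0$ in \eqref{2-26}. This holds for $\epsilon\le\epsilon_0$ because $|\tilde\kappa_i-\kappa_i|$ and $|\tilde l-l|$ are $O(\epsilon)$ by Lemma~\ref{2.6}, and the unperturbed quantity $1-2\kappa_i y/l$ is bounded below on $D$ by the choice of $r$. With these in place the estimate is purely a sup-norm comparison and needs none of the $C^1$ or $C^2$ information.
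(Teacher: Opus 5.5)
Your proposal is correct and is essentially the paper's own proof. Both arguments pair $x=\Phi_i(s,y)$ with $\tilde\Phi_i(s,y)\in\tilde U_i$ (and symmetrically in the other direction) and bound $|\Phi_i(s,y)-\tilde\Phi_i(s,y)|$ by the sup-norm part of \eqref{2-9}; your remarks that injectivity is not needed and that $\tilde\Phi_i$ must be well defined on the whole parameter domain are minor refinements of the same argument.
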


\begin{proof}
For any $x\in U_i$, there exists a unique $(s,y)$ such that $x = \Phi_i(s,y)$. Taking the same parameters $(s,y)$, the perturbed map gives $\tilde{x} = \tilde{\Phi}_i(s,y) \in \tilde{U}_i$. Then
\[
|x - \tilde{x}| = |\Phi_i(s,y) - \tilde{\Phi}_i(s,y)| \le \|\Phi_i - \tilde{\Phi}_i\|_{L^\infty} \le C\epsilon,
\]
Hence $\operatorname{dist}(x,\tilde{U}_i) \le C\epsilon$.
An analogous argument yields $\operatorname{dist}(\tilde{x},U_i) \le C\epsilon$ for any $\tilde{x}\in\tilde{U}_i$. The definition of the Hausdorff distance then implies $d_H(U_i,\tilde{U}_i) \le C\epsilon$.
\end{proof}

\begin{proof}[Proof of Lemma \ref{2.2}]

Points $x\in[0,1]^2$ fall into four categories:
\begin{itemize}
    \item $x\in U_i\cap\tilde{U}_i$: we use the stability of the area-preserving maps and their inverses.
    \item $x\in U_i\setminus\tilde{U}_i$: we employ the Hausdorff distance estimate.
    \item $x\in\tilde{U}_i\setminus U_i$: this case is symmetric to the previous one.
    \item $x\notin U_i\cup\tilde{U}_i$: both velocity fields vanish and the inequality is trivial.
\end{itemize}

\noindent\textbf{Case 1: $x\in U_i\cap\tilde{U}_i$.}
By definition, we decompose the difference as
\[
\tilde{V}_i(x)-V_i(x)=\bigl(\partial_t\tilde{\Phi}_i(\tilde{\Psi}_i)-\partial_t\tilde{\Phi}_i(\Psi_i)\bigr)+\bigl(\partial_t\tilde{\Phi}_i(\Psi_i)-\partial_t\Phi_i(\Psi_i)\bigr).
\]
From Lemma \ref{2.9} and the $C^0$ estimate $\|\tilde{\Psi}_i-\Psi_i\|\le C\epsilon$ (Lemma \ref{2.8}), the first term is bounded by $C\epsilon$; the second term is bounded by $C\epsilon$ directly from Lemma \ref{2.9}. Hence
\[
|\tilde{V}_i(x)-V_i(x)|\le C\epsilon,\qquad \forall x\in U_i\cap\tilde{U}_i.
\]
For the gradients, the chain rule yields
\[
\nabla\tilde{V}_i=\nabla(\partial_t\tilde{\Phi}_i)(\tilde{\Psi}_i)\cdot\nabla\tilde{\Psi}_i,\qquad
\nabla V_i=\nabla(\partial_t\Phi_i)(\Psi_i)\cdot\nabla\Psi_i.
\]
Thus
\begin{align*}
\nabla\tilde{V}_i-\nabla V_i &=\bigl[\nabla(\partial_t\tilde{\Phi}_i)(\tilde{\Psi}_i)-\nabla(\partial_t\tilde{\Phi}_i)(\Psi_i)\bigr]\cdot\nabla\tilde{\Psi}_i \\
&\quad +\nabla(\partial_t\tilde{\Phi}_i)(\Psi_i)\cdot\bigl[\nabla\tilde{\Psi}_i-\nabla\Psi_i\bigr] \\
&\quad +\bigl[\nabla(\partial_t\tilde{\Phi}_i)(\Psi_i)-\nabla(\partial_t\Phi_i)(\Psi_i)\bigr]\cdot\nabla\Psi_i .
\end{align*}
The first term is controlled by the Lipschitz continuity of $\nabla(\partial_t\tilde{\Phi}_i)$ and the $C^0$ estimate $\|\tilde{\Psi}_i-\Psi_i\|\le C\epsilon$, together with the uniform boundedness of $\nabla\tilde{\Psi}_i$, which yields $C\epsilon$. The second term is bounded by $C\epsilon$ via Lemma \ref{2.8} (which gives $\|\nabla\tilde{\Psi}_i-\nabla\Psi_i\|\le C\epsilon$) together with the boundedness of $\nabla(\partial_t\tilde{\Phi}_i)$. The third term is bounded by $C\epsilon$ via Lemma \ref{2.9}. Consequently, the three estimates gives
\[
|\nabla \tilde{V}_i(x) - \nabla V_i(x)| \le C\epsilon, \qquad \forall x\in U_i\cap\tilde{U}_i.
\]
We now prove the second part of Lemma \ref{2.2}, by definition,
\[
\Theta_i(x)=\bar{\Theta}(Y_i(x)/r),\quad \tilde{\Theta}_i(x)=\bar{\Theta}(\tilde{Y}_i(x)/r),
\]
with $\bar{\Theta}$ a smooth cut-off function supported in $[-1/2,1/2]$. Then
\[
|\tilde{\Theta}_i(x)-\Theta_i(x)|\le \frac{\|\bar{\Theta}'\|_{L^\infty}}{r}\,|\tilde{Y}_i(x)-Y_i(x)|\le C\epsilon.
\]

For the gradients, set $y=Y_i(x)$, $\tilde y=\tilde Y_i(x)$. Then
\[
\nabla\Theta_i=\frac1r\bar{\Theta}'(y/r)\nabla y,\qquad 
\nabla\tilde\Theta_i=\frac1r\bar{\Theta}'(\tilde y/r)\nabla\tilde y.
\]
Hence
\[
\nabla\tilde\Theta_i-\nabla\Theta_i=\frac1r\bar{\Theta}'(\tilde y/r)(\nabla\tilde y-\nabla y)+\frac1r\bigl[\bar{\Theta}'(\tilde y/r)-\bar{\Theta}'(y/r)\bigr]\nabla y.
\]
The Lipschitz property of $\bar{\Theta}'$ then yields $\|\nabla\tilde\Theta_i-\nabla\Theta_i\|_{L^\infty}\le C\epsilon$, and therefore $\|\tilde\Theta_i-\Theta_i\|_{C^1}\le C\epsilon$.

\noindent\textbf{Case 2: $x\in U_i\setminus\tilde{U}_i$.}
From Lemma \ref{2.10}, the Hausdorff distance satisfies $d_H(U_i,\tilde{U}_i)\le C\epsilon$ and likewise we have $d_H(\partial U_i,\partial\tilde{U}_i)\le C\epsilon$. On this set $\tilde{V}_i(x)=\tilde{\Theta}_i(x)=0$.

For $x\in U_i$, there exist $(s,y)$ such that $x=\Phi_i(s,y)$; then set $\tilde{x}=\tilde{\Phi}_i(s,y)\in\tilde{U}_i$. Then $|x-\tilde{x}|\le C\epsilon$ by Lemma \ref{2.1}. Since $x\notin\tilde{U}_i$ and $\tilde{U}_i$ is open, the segment $[x,\tilde{x}]$ intersects the boundary $\partial\tilde{U}_i = \tilde{\Phi}_i(\mathbb{R}\times\{\pm r\})$ at some point $x''$, with $|x-x''|\le|x-\tilde{x}|\le C\epsilon$. By the Hausdorff stability of the boundaries, there exists $\xi\in\partial U_i$ with $|x''-\xi|\le C\epsilon$, hence $|x-\xi|\le |x - x''| + |x'' - \xi|\le2C\epsilon$ and $\operatorname{dist}(x,\partial U_i)\le2C\epsilon$.

Lipschitz continuity then yields $|V_i(x)|\le L\cdot\operatorname{dist}(x,\partial U_i)\le C\epsilon$ and similarly $|\Theta_i(x)|\le L\cdot\operatorname{dist}(x,\partial U_i)\le C\epsilon$. A similar gradient argument gives $\|\tilde{V}_i-V_i\|_{C^1}\le C\epsilon$ and $\|\tilde{\Theta}_i-\Theta_i\|_{C^1}\le C\epsilon$ on $U_i\setminus\tilde{U}_i$.

\noindent\textbf{Case 3: $x\in\tilde{U}_i\setminus U_i$.}  
By symmetry, $V_i(x)=0$ and $\Theta_i(x)=0$, while the same argument yields $|\tilde{V}_i(x)|\le C\epsilon$ and $|\tilde{\Theta}_i(x)|\le C\epsilon$. Hence the $C^1$ estimates follow.

\noindent\textbf{Case 4: $x\notin U_i\cup\tilde{U}_i$.}  
Then $V_i(x)=\tilde{V}_i(x)=0$ and $\Theta_i(x)=\tilde{\Theta}_i(x)=0$. The inequalities hold trivially.

Combining all the four cases yields $\|\tilde{V}_i - V_i\|_{C^1(\mathbb{T}^2)} \le C\epsilon$ and $\|\tilde{\Theta}_i - \Theta_i\|_{C^1(\mathbb{T}^2)} \le C\epsilon$ for all $t\in[0,1]$ and $x\in[0,1]^2$, which completes the proof of Lemma \ref{2.2}.
\end{proof}

Applying the area‑preserving and equal‑length conditions, we proved the $C^2$ stability of the area‑preserving map and the $C^1$ stability of the local fields, which provide the geometric foundation for Section 3 and Section 4.

\section{Perturbed quasi-self-similar family: construction and estimates}

This section proves Lemma~\ref{1.1} and Theorem~\ref{1.2}. Patching the perturbed fields $(\tilde V_i,\tilde\Theta_i)$ from Section~2 via the Peano curve yields a global quasi-self-similar family $\{\tilde\rho_n,\tilde v_n\}$ that retains gradient growth, recursion, and compact support. Theorem~\ref{1.2} extends \cite{alberti2019exponential} to the perturbed setting and is crucial for Section~4.

\subsection{Construction of the perturbed basic family and the global quasi-self-similar family}
\par
We first complete the proof of Lemma \ref{1.1}.

\begin{proof}[Proof of Lemma \ref{1.1}]
Following the quasi-self-similar construction of \cite{alberti2019exponential} with the perturbed curves in Section 2, we obtain the family $\{(\tilde V_i,\tilde\Theta_i)\}_{i=1}^N$ on $[0,1]^2\times[0,1]$ satisfying
\[
\partial_t\tilde\Theta_i+\tilde V_i\cdot\nabla\tilde\Theta_i=0.
\]
By Lemmas \ref{2.1} and \ref{2.2}, $\tilde V_i\in C^\infty([0,1];C^3([0,1]^2))$ with $\operatorname{div}\tilde V_i=0$ and $\tilde V_i$ tangent to $\partial[0,1]^2$, while $\tilde\Theta_i\in C^\infty([0,1];C^3([0,1]^2))$ has zero mean and unit $L^2$ norm:
\[
\int_{(0,1)^2}\tilde\Theta_i(x,t)\,\mathrm dx = 0,\qquad
    \int_{(0,1)^2}\tilde\Theta_i(x,t)^2\,\mathrm dx = 1.
\]
Moreover,there exists a constant $C>0$ depending only on the original geometry such that
\[
\|\tilde V_i - V_i\|_{C^1([0,1]^2)} \le C\epsilon,\qquad \|\tilde\Theta_i - \Theta_i\|_{C^1([0,1]^2)} \le C\epsilon.
\]
The quasi‑self‑similar patching condition\cite{alberti2019exponential} states: for every $Q\in\mathcal Q(5)$ there exists $j=j(Q,i)\in\{1,\dots,N\}$ such that
\[
\tilde\Theta_i(x,1)=\tilde\Theta_j\bigl(5(x-r_Q),0\bigr),\quad x\in Q,
\]
where $r_Q$ is the lower‑left corner of $Q$.
Since the perturbations vanish at boundaries, the patched fields remain $C^1$ space regularity. Hence Lemma \ref{1.1} follows.
\end{proof}

Then we define the perturbed quasi-self-similar family $\{\tilde{\rho}_n(x,t),\tilde{v}_n(x,t)\}_{n\in\mathbb{N}}$ on $[0,1]\times[0,1]^2$.Denote by $\mathcal{Q}(2\cdot5^n)$ a partition of the unit square $[0,1]^2$ into squares of side length $(2\cdot5^n)^{-1}$. For each $Q\in\mathcal{Q}(2\cdot5^n)$, let $r_Q$ be its lower‑left corner, then $Q = 2\cdot5^n Q_0 + r_Q$ with $Q_0=[0,1]^2$. For every $n\in\mathbb{N}$ and $t\in[0,1]$, define
\begin{equation}
    \tilde{\rho}_n(x,t) = \sum_{Q\in\mathcal{Q}(2\cdot5^n)} \chi_Q(x)\,\tilde{\Theta}_{i(Q)}\bigl(2\cdot5^n(x-r_Q),\,t\bigr),
    \label{3-1}
\end{equation}
\begin{equation}
    \tilde{v}_n(x,t) = \sum_{Q\in\mathcal{Q}(2\cdot5^n)} \chi_Q(x)\,\frac{1}{2\cdot5^n}\,\tilde{V}_{i(Q)}\bigl(2\cdot5^n(x-r_Q),\,t\bigr),
    \label{3-2}
\end{equation}
where $i(Q)\in\{1,\dots,N\}$ is determined in \cite{alberti2019exponential} and $\chi_Q$ denotes the characteristic function.

Since the perturbation functions $h_i$ together with all their spatial derivatives vanish on the boundaries, the patched global velocity and scalar fields enjoy the regularities that $\tilde{v}_n\in C^\infty([0,1];C^3([0,1]^2;\mathbb{R}^2))$ and $\tilde{\rho}_n\in C^\infty([0,1];C^3([0,1]^2))$.

\subsection{Estimates for the perturbed quasi-self-similar family}

We now prove Theorem \ref{1.2} using the stability estimates from Chapter~2. The constant $C$ depends only on the original geometry and is independent of $n$, $\epsilon$ and $t$.

\begin{proof}[Proof of Theorem \ref{1.2}]

For $(\tilde{v}_n,\tilde{\rho}_n)$ defined by \eqref{3-1}–\eqref{3-2}, the following properties hold for every $n\in\mathbb{N}$ and $t\in[0,1]$.

\noindent\textbf{Step 1. Hölder estimates for the velocity field.}
For any $Q\in\mathcal{Q}(2\cdot5^n)$, set $z = 2\cdot5^n(x-r_Q)\in[0,1]^2$. Then we have $\tilde{v}_n = \frac{1}{2\cdot5^n}\tilde{V}_{i(Q)}(z,t)$. By scaling $\nabla_x^\alpha \tilde{v}_n = (2\cdot5^n)^{\alpha-1}\nabla_z^\alpha\tilde{V}_{i(Q)}$($\alpha\in(0,1)$). For the unperturbed field $v_n$, $\|\nabla_z^\alpha V_{i(Q)}\|_{L^\infty}\le C$, hence
\[
\|\nabla_x^\alpha v_n\|_{L^\infty(Q)}\le C(2\cdot5^n)^{\alpha-1}=C5^{(\alpha-1)n}.
\] Time derivatives on $V_{i(Q)}$ give $\|\partial_t^k v_n\|_{C^\alpha}\le C(\alpha,k)5^{(\alpha-1)n}$. Using the stability estimate $\|\tilde{V}_{i(Q)}-V_{i(Q)}\|_{C^1}\le C\epsilon$ from Lemma \ref{2.2}, we obtain
\[
\|\partial_t^k \tilde{v}_n\|_{C^\alpha} \le (2\cdot5^n)^{\alpha-1}\bigl(\|\partial_t^k V_{i(Q)}\|_{C^\alpha}+C\epsilon\bigr)
\le C(\alpha,k)5^{(\alpha-1)n}+O(\epsilon)5^{(\alpha-1)n}.
\]
Due to exponential decay of $5^{(\alpha-1)n}$ and fixed $O(\epsilon)$, the estimate is uniformly bounded.

\noindent\textbf{Step 2. Conservation properties.}
With the change of variables $z=2\cdot5^n(x-r_Q)$, we have $dx=(2\cdot5^n)^{-2}dz$. Hence
\[
\int_{[0,1]^2}\tilde{\rho}_n\,dx = \sum_{Q}(2\cdot5^n)^{-2}\int_{Q}\tilde{\Theta}_{i(Q)}(z,t)\,dz = 0,
\]
since each $\tilde{\Theta}_{i(Q)}$ has zero mean. Since the supports of distinct squares are disjoint,
\[
\int_{[0,1]^2}|\tilde{\rho}_n|^2\,dx = \sum_{Q}(2\cdot5^n)^{-2}\int_{Q}|\tilde{\Theta}_{i(Q)}(z,t)|^2\,dz = (2\cdot5^n)^{-2}|\mathcal{Q}(2\cdot5^n)| = 1.
\]

\noindent\textbf{Step 3. $L^\infty$ and gradient estimates.}
From Lemma \ref{2.2}, for all sufficiently  small $\epsilon$, there exists a constant $C$ such that
\[
|\tilde{\Theta}_i|_{L^\infty} \le C+O(\epsilon),\quad
|\nabla\tilde{\Theta}_i|_{L^\infty} \le C+O(\epsilon).
\]
On each square $Q$, $\tilde{\rho}_n(x,t)=\tilde{\Theta}_{i(Q)}(z,t)$ and $\nabla_x\tilde{\rho}_n = 2\cdot5^n\nabla_z\tilde{\Theta}_{i(Q)}$. Therefore
\[
\|\tilde{\rho}_n\|_{L^\infty}\le C+O(\epsilon),\quad
\|\nabla\tilde{\rho}_n\|_{L^\infty}\le C5^n+O(\epsilon)5^n.
\]

\noindent\textbf{Step 4. Existence of a common compact support.}
In the original construction \cite{alberti2019exponential}, there exists a compact set $K_0\subset(0,1)^2$ such that
\[
\operatorname{supp} v_n(\cdot,t)\cup\operatorname{supp}\rho_n(\cdot,t)\subset K_0,\qquad \forall n\in\mathbb{N},\ t\in[0,1].
\]
All building blocks have supports in a fixed open set $U_i\subset(0,1)^2$, so that every scaled support of $Q$ with $r_Q + \frac{1}{2\cdot5^n}U_i\subset K_0$ lies inside $K_0$ uniformly in $n$.
By Lemma \ref{2.2}, the supports of perturbed blocks lie near $K_0$.
Since $K_0$ is closed, there exists $\delta>0$ such that the $\delta$-neighbourhood $K_0^\delta := \{x : \operatorname{dist}(x,K_0)<\delta\}$ is still contained in $(0,1)^2$. Choose $\epsilon_0$ sufficiently small so that for $\epsilon\le\epsilon_0$,
\[
\operatorname{supp}\tilde{V}_i(\cdot,t)\cup\operatorname{supp}\tilde{\Theta}_i(\cdot,t)\subset K_0^{\delta/2},\qquad \forall i,t.
\]

For $Q\in\mathcal{Q}(2\cdot5^n)$, the patching condition $r_Q + \frac{1}{2\cdot5^n}\operatorname{supp}\tilde{V}_{i(Q)}\subset K_0^{\delta/2}$ ensures that the scaled supports of $\tilde{v}_n,\tilde{\rho}_n$ lie in $\overline{K_0^\delta}=:K$, which is a compact subset of $(0,1)^2$ containing all perturbed supports. Periodic extension preserves the property that supports remain contained in $K$.

\noindent\textbf{Step 5. Recursion relation.}
At $t=1$, the definition and the patching condition give
\[
\tilde{\rho}_n(x,1)=\sum_{Q\in\mathcal{Q}(2\cdot5^n)}\chi_Q(x)\,\tilde{\Theta}_{j(Q)}(2\cdot5^{n+1}(x-r_Q),0).
\]
Reindexing by $Q'\in\mathcal{Q}(2\cdot5^{n+1})$ (each contained in a unique $Q$) yields
\[
\tilde{\rho}_n(x,1)=\sum_{Q'\in\mathcal{Q}(2\cdot5^{n+1})}\chi_{Q'}(x)\,\tilde{\Theta}_{j(Q')}(2\cdot5^{n+1}(x-r_{Q'}),0)=\tilde{\rho}_{n+1}(x,0).
\]
Hence $\tilde{\rho}_n(x,1)=\tilde{\rho}_{n+1}(x,0)$, completing the proof of Theorem \ref{1.2}.
\end{proof}

We now estimate the nonlinear term after perturbation.

\begin{theorem}\label{3.1}
For any $\alpha\in(0,1)$ and $k\in\mathbb{N}$, define
\[
(\tilde{v}_n\cdot\nabla\tilde{v}_n)(x,t)=\sum_{Q\in\mathcal{Q}(2\cdot5^n)}\chi_Q(x)\,\frac{1}{2\cdot5^n}\,\tilde{V}_{i(Q)}\cdot\nabla\tilde{V}_{i(Q)}(2\cdot5^n(x-r_Q),t).
\]
Then there exists $C(\alpha,k)>0$ independent of $n,t,\epsilon$ such that
\begin{equation}
\|\partial_t^k(\tilde{v}_n\cdot\nabla\tilde{v}_n)\|_{C([0,1];C^\alpha(\mathbb{T}^2))}\le C(\alpha,k)5^{-(1-\alpha)n}+O(\epsilon)5^{-(1-\alpha)n}.
\label{3-3}
\end{equation}
\end{theorem}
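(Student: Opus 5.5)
The plan is to mirror Step 1 of the proof of Theorem \ref{1.2}: reduce to the unit-scale building blocks, split off the unperturbed term, and control the perturbation with Lemma \ref{2.2}.

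\textbf{Step 1: the unit-scale block.} On a square $Q\in\mathcal Q(2\cdot5^n)$ set $z=2\cdot5^n(x-r_Q)$. By definition,
\[
(\tilde v_n\cdot\nabla\tilde v_n)(x,t)=\frac{1}{2\cdot5^n}\,W_{i(Q)}(z,t),\qquad W_i:=\tilde V_i\cdot\nabla\tilde V_i .
\]
Thus the whole estimate comes down to bounding $\|\partial_t^k W_i\|_{C^\alpha([0,1]^2)}$ uniformly in $i$ and $t$. Only the $N$ blocks are involved, so no dependence on $n$ enters here.

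\textbf{Step 2: split off the unperturbed product.} Write
\[
W_i=V_i\cdot\nabla V_i+(\tilde V_i-V_i)\cdot\nabla\tilde V_i+V_i\cdot\nabla(\tilde V_i-V_i).
\]
The first term is the original Alberti--Crippa--Mazzucato block. It is smooth, so its $\partial_t^k$ derivative has $C^1$ (hence $C^\alpha$) norm at most $C(k)$.

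For the remaining two terms I use $C^\alpha\subset C^0$-Lipschitz. Since $C^1\hookrightarrow C^\alpha$ on the unit square, it suffices to bound their $C^1$ norms, and these involve up to second derivatives of $\tilde V_i-V_i$.

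\emph{This is the main obstacle.} Lemma \ref{2.2} gives only $\|\tilde V_i-V_i\|_{C^1}\le C\epsilon$, which is one derivative short. There are two ways around it.

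First, avoid the extra derivative by using the $C^\alpha$ product rule $\|fg\|_{C^\alpha}\le\|f\|_{C^\alpha}\|g\|_{C^\alpha}$. This needs $\|\nabla(\tilde V_i-V_i)\|_{C^\alpha}$, still slightly beyond $C^1$.

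Second, upgrade Lemma \ref{2.2} to a $C^2$ bound. The data allow this: $h_i\in C^\infty_tC^6_s$. Repeating Lemmas \ref{2.6}--\ref{2.9} with two more derivatives gives $\|\tilde\Phi_i-\Phi_i\|_{C^3}\le C\epsilon$ and $\|\partial_t\tilde\Phi_i-\partial_t\Phi_i\|_{C^2}\le C\epsilon$, since each derivative of $\tilde\kappa_i$ costs two derivatives of $h_i$ and we have six. Running the inverse-map argument of Lemma \ref{2.8} one order higher, via the formula for $\nabla^2(\Phi^{-1})$, then gives $\|\tilde V_i-V_i\|_{C^2}\le C\epsilon$. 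I will take this second route.

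Time derivatives are handled the same way. $\partial_t^k$ falls on $h_i$ and the original geometry, and all $t$-derivatives of $h_i$ are bounded by $\epsilon$ by hypothesis (i). This gives
\[
\|\partial_t^k(\tilde V_i-V_i)\|_{C^2}\le C(k)\epsilon,\qquad \|\partial_t^k\tilde V_i\|_{C^2}\le C(k).
\]
Leibniz in $t$ then yields
\[
\|\partial_t^k W_i\|_{C^\alpha}\le C(\alpha,k)+C(\alpha,k)\epsilon .
\]

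\textbf{Step 3: rescale and globalize.} On each $Q$, the $C^\alpha$ seminorm in $x$ picks up a factor $(2\cdot5^n)^\alpha$ relative to the seminorm in $z$. Combined with the prefactor $(2\cdot5^n)^{-1}$, this gives a factor $C5^{(\alpha-1)n}$ on each block.

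To pass to a global Hölder bound across squares, I use two facts. First, all blocks are compactly supported inside $K\subset(0,1)^2$ by Theorem \ref{1.2}(3), so the product vanishes near every $\partial Q$ and the patched field is smooth. Second, for $x,y$ in different squares, split $|f(x)-f(y)|\le|f(x)|+|f(y)|$. Each term is at most $C5^{-n}$, and $5^{-n}\le C5^{(\alpha-1)n}|x-y|^\alpha$ whenever $|x-y|\gtrsim5^{-n}$. If instead $|x-y|\lesssim5^{-n}$ with $x,y$ in different squares, insert the boundary point between them, where the field vanishes.

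Taking the supremum over $t\in[0,1]$ gives \eqref{3-3}.
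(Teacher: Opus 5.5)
Your skeleton matches the paper's proof. You reduce to the unit block, compare $\tilde V_i\cdot\nabla\tilde V_i$ with $V_i\cdot\nabla V_i$, and gain $(2\cdot5^n)^{\alpha-1}$ from scaling. On the spatial side you are more careful than the paper. Its proof asserts $\|\tilde V_i\cdot\nabla\tilde V_i-V_i\cdot\nabla V_i\|_{C^\alpha}\le C\epsilon$ directly from Lemma 2.2, and, as you observe, that does not follow from a $C^1$ bound on $\tilde V_i-V_i$. Your $C^2$ upgrade using $h_i\in C^6_s$ repairs this. In fact uniform-in-$\epsilon$ $C^{1,\alpha}$ bounds on $\partial_t^j\tilde V_i$ would already suffice, since the $\epsilon$-dependence is absorbed into $C(\alpha,k)$. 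Your Step 3 also supplies the cross-square H\"older argument that the paper omits.

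The genuine gap is the time derivatives. The claim that ``$\partial_t^k$ falls on $h_i$ and the original geometry'' holds only in the Lagrangian variables $(s,y)$. The field you must differentiate is Eulerian, $\tilde V_i(x,t)=\partial_t\tilde\Phi_i(t,\tilde\Psi_i(t,x))$, with $\partial_t\tilde\Psi_i=-[\nabla\tilde\Phi_i(\tilde\Psi_i)]^{-1}\partial_t\tilde\Phi_i(\tilde\Psi_i)$. So every $\partial_t$ also hits $\tilde\Psi_i$ and consumes one spatial derivative of $\tilde\Phi_i$.

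Since $\tilde\kappa_i$ contains $\partial_s^2h_i$, hypothesis (i) gives only $\tilde\Phi_i\in C^4$ in $(s,y)$, however smooth $h_i$ is in $t$. Hence $\partial_t^k\tilde V_i$ is in general only $C^{4-k}$ in space. The one-dimensional model shows this loss is real: take $\Phi=s+t\phi(s)$, so $V=\phi(\Psi)$. Lagrange inversion gives $\partial_t^kV(x,0)=(-1)^k\partial_x^{k-1}(\phi'\phi^k)$, which involves $\phi^{(k)}$.

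Consequently your bounds $\|\partial_t^k(\tilde V_i-V_i)\|_{C^2}\le C(k)\epsilon$ and $\|\partial_t^k\tilde V_i\|_{C^2}\le C(k)$ are available only for $k\le 2$, not for all $k\in\mathbb{N}$. Even a $C^{1,\alpha}$ bound on $\partial_t^k\tilde V_i$ fails beyond that range. The paper's proof is silent on this too: its $C\epsilon$ comes from the fixed-time Lemma 2.2, which contains no time derivatives. To cover every $k$ you would need more spatial regularity of $h_i$ (roughly $C^{k+4}$ in $s$, or $C^\infty$), or else you must restrict the statement to small $k$.
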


\begin{proof}
By \eqref{3-2}, for $Q\in\mathcal{Q}(2\cdot5^n)$ set $z=2\cdot5^n(x-r_Q)$. Then
\[
\tilde{v}_n\cdot\nabla\tilde{v}_n(x,t)=\frac{1}{2\cdot5^n}\,\tilde{V}_{i(Q)}\cdot\nabla\tilde{V}_{i(Q)}(z,t).
\]
Let $\tilde W_i=\tilde V_i\cdot\nabla\tilde V_i$ and $W_i=V_i\cdot\nabla V_i$. Lemma \ref{2.2} gives $\|\tilde W_i-W_i\|_{C^\alpha}\le C\epsilon$, and $\|W_i\|_{C^\alpha}=\|V_{i(Q)}\cdot\nabla_y V_{i(Q)}\|_{C^\alpha}\le C$. Therefore
\begin{align*}
\|\partial_t^k(\tilde{v}_n\cdot\nabla\tilde{v}_n)\|_{C^\alpha}
&\le \frac{1}{2\cdot5^n}(2\cdot5^n)^\alpha\bigl(\|\partial_t^k W_{i(Q)}\|_{C^\alpha}+C\epsilon\bigr)\\
&\le C(k,\alpha)5^{-(1-\alpha)n}+O(\epsilon)5^{-(1-\alpha)n},
\end{align*}
which proves the theorem.
\end{proof}

Based on the perturbed basic family $(\tilde V_i,\tilde\Theta_i)_{i=1}^N$ and the quasi-self-similar patching of \cite{alberti2019exponential}, we have constructed the perturbed quasi-self-similar family $\{\tilde v_n,\tilde\rho_n\}$ and proved Theorem~\ref{1.2} together with the nonlinear estimate. These results prepare the ground for the forcing construction and the anomalous dissipation lower bound in Section 4.

\section{Uniform boundedness of the forcing term}

In the next two sections, using the perturbed quasi-self-similar family from Section~3, we apply time smoothing to construct velocity fields $\tilde{v}^m$ and passive scalars $\tilde{\theta}^m$ satisfying the 2D forced Navier–Stokes equations. The corresponding forcing terms $\tilde{g}^m$ are shown to be uniformly bounded in $C^\alpha$ and converge to $\tilde{g}=\partial_t\tilde{v}+\tilde{v}\cdot\nabla\tilde{v}$. A frequency concentration lemma and an approximation argument then yield the stability of the anomalous dissipation lower bound (Theorem~\ref{1.3}). Embedding this 2D result into the $(2+\frac12)$-dimensional framework proves Theorem~\ref{1.4}, i.e., the existence and stability of solutions for the 3D forced Navier–Stokes equations under perturbation, extending the original construction of Bru\'e and De Lellis \cite{brue2023anomalous}.

We now prove the forcing term is uniformly bounded in $C^\alpha$ ($\alpha\in(0,1)$).

\subsection{Time reparametrisation}

Following \cite{brue2023anomalous}, we introduce a smooth time reparametrisation function $\eta(t)$ to remove the lack of smoothness of the quasi-self-similar family at the time junctions. Lemma~\ref{4.1} (from \cite{brue2023anomalous}) states its properties.

\begin{lemma}\label{4.1}
Define $t_n = 1-(n+1)^{-2}$ ($n\in\mathbb{N}$) and $\Delta t_n = t_{n+1}-t_n$. There exists a non‑decreasing function $\eta\in C^\infty([0,1];[0,1])$ such that
\begin{enumerate}
    \item $\eta(t_n)=t_n$ for all $n\in\mathbb{N}$;
    \item $\eta^{(k)}(t_n)=0$ for all $k\ge1$, $n\in\mathbb{N}$;
    \item for every $k\in\mathbb{N}$ there exists a constant $C(k)$ such that for $t\in[t_n,t_{n+1})$, $n\in\mathbb{N}$,
    \[
    |\eta^{(k)}(t)|\le C(k)\, n^{5k}.
    \]
\end{enumerate}
\end{lemma}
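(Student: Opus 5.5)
The plan is to build $\eta$ by gluing together rescaled copies of one fixed smooth step profile, one copy on each interval $[t_n,t_{n+1}]$. I would start by fixing $\psi\in C^\infty(\mathbb{R};[0,1])$, non-decreasing, with $\psi\equiv0$ on $(-\infty,0]$ and $\psi\equiv1$ on $[1,\infty)$. A concrete choice is
\[
\psi(\tau)=\frac{\phi(\tau)}{\phi(\tau)+\phi(1-\tau)},\qquad \phi(\tau)=e^{-1/\tau}\chi_{(0,\infty)}(\tau).
\]
All derivatives of $\psi$ of order $k\ge1$ vanish at $\tau=0$ and $\tau=1$, and we set $M_k:=\|\psi^{(k)}\|_{L^\infty}$.

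Next, for $t\in[t_n,t_{n+1})$ I would define
\[
\eta(t)=t_n+\Delta t_n\,\psi\!\Bigl(\frac{t-t_n}{\Delta t_n}\Bigr),
\]
and set $\eta(1)=1$. This gives $\eta(t_n)=t_n$ at every node. Continuity at the nodes follows from $\psi(1)=1$, since the value approaching $t_{n+1}$ from the left is $t_n+\Delta t_n=t_{n+1}$. The function is non-decreasing and takes values in $[0,1]$ because $\psi$ does and $t_n\uparrow1$. On each open interval $\eta$ is smooth. At each node $t_n$, both one-sided derivatives of every order $k\ge1$ vanish by the flatness of $\psi$ at $0$ and $1$, so $\eta$ is $C^\infty$ on $[0,1)$ with $\eta^{(k)}(t_n)=0$. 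This verifies items (1) and (2).

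For item (3), the chain rule gives, on $[t_n,t_{n+1})$,
\[
|\eta^{(k)}(t)|=\Delta t_n^{\,1-k}\,\Bigl|\psi^{(k)}\Bigl(\frac{t-t_n}{\Delta t_n}\Bigr)\Bigr|\le M_k\,\Delta t_n^{\,1-k}.
\]
Since
\[
\Delta t_n=(n+1)^{-2}-(n+2)^{-2}=\frac{2n+3}{(n+1)^2(n+2)^2},
\]
we have $\Delta t_n\ge c\,(n+1)^{-3}$. Hence $\Delta t_n^{1-k}\le C(n+1)^{3(k-1)}$, which is at most $C(k)\,n^{5k}$ for $n\ge1$. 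The case $n=0$ involves only a single interval and is absorbed into the constant $C(k)$ (or one simply reads the bound with $n+1$ in place of $n$).

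The only delicate step is regularity at the endpoint $t=1$, where the intervals accumulate. Item (3) only requires bounds on each $[t_n,t_{n+1})$, so one has to settle what $\eta\in C^\infty([0,1])$ means there. First, $|\eta(t)-t|\le\Delta t_n\to0$, so $\eta$ is continuous at $1$ with $\eta(1)=1$. The derivatives of $\eta$ blow up polynomially as $n\to\infty$, so $\eta$ is not smooth up to $t=1$ in the classical sense. I would therefore interpret the lemma, following \cite{brue2023anomalous}, as $\eta\in C^\infty([0,1))\cap C([0,1])$ satisfying (1)–(3). This is all that the later smoothing \eqref{1.4-1} uses, because the truncated field $\tilde v^m$ only involves $t<t_{m+1}$.
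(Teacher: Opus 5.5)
Your proof is correct. There is no argument in the paper to compare it with step by step: the paper does not prove this lemma and simply imports it from Bru\'e--De Lellis. Gluing rescaled copies $t_n+\Delta t_n\,\psi((t-t_n)/\Delta t_n)$ of a flat monotone step profile is the natural way to supply the missing argument. It also gives the sharper bound $|\eta^{(k)}|\le M_k\,2^{k-1}(n+1)^{3(k-1)}$ on $[t_n,t_{n+1})$, using $\Delta t_n\ge\tfrac12(n+1)^{-3}$.

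Two of your reinterpretations of the statement deserve a sharper justification than the one you give.

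First, reading the lemma as $\eta\in C^\infty([0,1))\cap C([0,1])$ is not just convenient; it is forced, and not only because higher derivatives blow up. Suppose some $\eta\in C^1([0,1])$ satisfied (1)--(2). Then $\eta'(1)=\lim_n\eta'(t_n)=0$. On the other hand, the mean value theorem gives $\xi_n\in(t_n,t_{n+1})$ with $\eta'(\xi_n)=(\eta(t_{n+1})-\eta(t_n))/\Delta t_n=1$ and $\xi_n\to1$, so $\eta'(1)=1$. Hence no function satisfying (1)--(2) is even $C^1$ up to $t=1$.

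Second, for $n=0$ (the paper's $\mathbb{N}$ contains $0$, since $t_0=0$) the literal bound $C(k)\,0^{5k}=0$ cannot be ``absorbed into the constant.'' It would force $\eta$ to be constant on $[0,\tfrac34)$, which contradicts $\eta(0)=0$, $\eta(\tfrac34)=\tfrac34$ and continuity. Item (3) must therefore be read with $(n+1)^{5k}$ or $\max\{n,1\}^{5k}$, which is the alternative you already mention. Drop the ``absorbed'' phrasing and keep that one.
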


Using $\eta$, we smooth the quasi-self-similar family from Section~3. For $m\in\mathbb{N}$, define
\begin{equation}
    \tilde{v}^m(x,t) = \sum_{n=0}^{m} \chi_{[t_n,t_{n+1})}(\eta(t))\; \eta'(t)\;\frac{1}{\Delta t_n}\;
\tilde{v}_n\!\left(x,\frac{\eta(t)-t_n}{\Delta t_n}\right),
 \label{4-2}
\end{equation}
\begin{equation}
    \tilde{\rho}^m(x,t) = \sum_{n=0}^{m} \chi_{[t_n,t_{n+1})}(\eta(t))\;
\tilde{\rho}_n\!\left(x,\frac{\eta(t)-t_n}{\Delta t_n}\right).
 \label{4-3}
\end{equation}
Since $\eta'(t_n)=0$, $\tilde{v}^m,\tilde{\rho}^m$ are smooth at the junctions $t_n$. Theorem~\ref{1.2} implies $\tilde{v}_n,\tilde{\rho}_n\in C^\infty([0,1];C^3(\mathbb{T}^2))$, so $\tilde{v}^m,\tilde{\rho}^m$ belong to the same space. As $m\to\infty$, the series converge in $C^\infty([0,1];C^3(\mathbb{T}^2))$ to the limits
\begin{equation}
    \tilde{\rho}(x,t) = \sum_{n=0}^{\infty} \chi_{[t_n,t_{n+1})}(\eta(t))\;
\tilde{\rho}_n\!\left(x,\frac{\eta(t)-t_n}{\Delta t_n}\right),
\label{4-4}
\end{equation}
\begin{equation}
    \tilde{v}(x,t) = \sum_{n=0}^{\infty} \chi_{[t_n,t_{n+1})}(\eta(t))\; \eta'(t)\;\frac{1}{\Delta t_n}\;
\tilde{v}_n\!\left(x,\frac{\eta(t)-t_n}{\Delta t_n}\right).
\label{4-5}
\end{equation}
\subsection{Proof of uniform boundedness of the forcing term}

Choose the viscosity coefficient sequence
\begin{equation}
    \mu_m = m^{10}\,5^{-2m}.
    \label{4-6}
\end{equation}
Define the perturbed forcing
\begin{equation}
    \tilde{g}^m := \partial_t\tilde{v}^m + \tilde{v}^m\cdot\nabla\tilde{v}^m - \mu_m\Delta\tilde{v}^m,
    \label{4-7}
\end{equation}
which belongs to $C^\infty([0,1];C^1(\mathbb{T}^2))$ since $\tilde{v}^m\in C^\infty([0,1];C^3(\mathbb{T}^2))$.

\begin{theorem}\label{4.2}
For any $\alpha\in(0,1)$ there exists a constant $\tilde C(\alpha)$ independent of $m$ and $\epsilon$ such that
\begin{equation}
    \|\tilde{g}^m\|_{C([0,1];C^\alpha(\mathbb{T}^2))}\le\tilde C(\alpha).
    \label{4-8}
\end{equation}
Moreover, as $m\to\infty$, $\tilde{g}^m$ converges in $C([0,1];C^\alpha(\mathbb{T}^2))$ to
\begin{equation}
    \tilde{g}:=\partial_t\tilde{v}+\tilde{v}\cdot\nabla\tilde{v}.
    \label{4-9}
\end{equation}
\end{theorem}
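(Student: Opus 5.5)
The plan is to estimate the three pieces of $\tilde g^m$ separately on each time interval $\eta(t)\in[t_n,t_{n+1})$, $n\le m$. On that interval only one summand of \eqref{4-2} is active. Write $s=(\eta(t)-t_n)/\Delta t_n\in[0,1)$ and $a_n(t)=\eta'(t)/\Delta t_n$, so that $\tilde v^m(x,t)=a_n(t)\tilde v_n(x,s)$.

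Since $\Delta t_n\sim n^{-3}$ and, by Lemma~\ref{4.1}, $|\eta^{(k)}|\le C(k)n^{5k}$, every time derivative of $a_n$ or of $s(t)$ costs at most a polynomial factor in $n$:
\[
|a_n|\le Cn^{8},\qquad |a_n'|\le Cn^{13},\qquad |\partial_t s|=|a_n|\le Cn^{8}.
\]
The key mechanism is that polynomial factors in $n$ are absorbed by the geometric decay $5^{(\alpha-1)n}$ coming from Theorem~\ref{1.2}(1) and Theorem~\ref{3.1}. The perturbation enters only through the factor $C(\alpha,k)+O(\epsilon)$, which is bounded by $2C(\alpha,k)$ once $\epsilon\le\epsilon_0$. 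This is what makes $\tilde C(\alpha)$ independent of $\epsilon$.

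\textbf{Time derivative.} Compute
\[
\partial_t\tilde v^m=a_n'\,\tilde v_n(\cdot,s)+a_n^2\,\partial_s\tilde v_n(\cdot,s).
\]
By \eqref{1.3-1} with $k=0,1$,
\[
\|\partial_t\tilde v^m\|_{C^\alpha}\le C(n^{13}+n^{16})\,5^{(\alpha-1)n}(1+O(\epsilon)),
\]
which is bounded uniformly in $n$.

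\textbf{Nonlinear term.} We have $\tilde v^m\cdot\nabla\tilde v^m=a_n^2\,(\tilde v_n\cdot\nabla\tilde v_n)(\cdot,s)$. Theorem~\ref{3.1} then gives the bound $Cn^{16}5^{-(1-\alpha)n}$, which is again uniformly bounded.

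\textbf{Viscous term.} This is the step where the choice \eqref{4-6} of $\mu_m$ matters, and I expect it to be the main obstacle. We have $\mu_m\Delta\tilde v^m=\mu_m a_n\Delta\tilde v_n$. By scaling in \eqref{3-2}, on each square $\Delta_x\tilde v_n=(2\cdot5^n)\,(\Delta_z\tilde V_{i(Q)})(z,t)$, so
\[
\|\Delta\tilde v_n\|_{C^\alpha}\le C\,5^{(1+\alpha)n}\,\|\tilde V_i\|_{C^{2,\alpha}}.
\]
This requires a $C^{2,\alpha}$ bound on $\tilde V_i$, which is stronger than the $C^1$ stability of Lemma~\ref{2.2}. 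I would obtain it from $\tilde V_i\in C^3$ together with the $C^6$ smallness of $h_i$. Concretely, I would rerun Lemmas~\ref{2.6}--\ref{2.8} one derivative higher, i.e. $C^3$ closeness of $\tilde\Phi_i$ using $|h_i|_{C^5}\le\epsilon$; at minimum one needs a uniform bound $\|\tilde V_i\|_{C^3}\le C$ for $\epsilon\le\epsilon_0$.

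With this bound,
\[
\|\mu_m\Delta\tilde v^m\|_{C^\alpha}\le C\,m^{10}\,5^{-2m}\,n^{8}\,5^{(1+\alpha)n},\qquad n\le m,
\]
and this is at most $Cm^{18}5^{(\alpha-1)m}$, which is bounded in $m$. To keep the weights at the junction times $t_n$ and $t_{n+1}$ clean, I would use that both pieces vanish smoothly there since $\eta'(t_n)=0$, so the estimates hold on all of $[0,1]$. Summing the three contributions gives \eqref{4-8}.

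\textbf{Convergence.} For $\eta(t)<t_{m+1}$, the difference is
\[
\tilde g^m-\tilde g=-\mu_m\Delta\tilde v^m,
\]
and the previous bound shows this is $O(m^{18}5^{(\alpha-1)m})\to0$. On the tail $\eta(t)\ge t_{m+1}$, $\tilde g^m$ vanishes except for the viscous remainder. Moreover $\tilde g$ there is controlled by
\[
\sup_{n>m}\,n^{16}\,5^{(\alpha-1)n}\to0
\]
by the same estimates. Hence $\tilde g^m\to\tilde g$ in $C([0,1];C^\alpha(\mathbb{T}^2))$.
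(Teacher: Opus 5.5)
Your proposal is correct. For the bound \eqref{4-8} it is the paper's argument: the same split into $\partial_t\tilde v^m$, $\tilde v^m\cdot\nabla\tilde v^m$ and $\mu_m\Delta\tilde v^m$, and the same weight bounds $|a_n|\lesssim n^{8}$, $|a_n'|\lesssim n^{13}$. As in the paper, polynomial factors are absorbed by $5^{(\alpha-1)n}$, and $\mu_m=m^{10}5^{-2m}$ beats $5^{(1+\alpha)n}$ for $n\le m$. The paper sums over $n\le m$ where you take the supremum over the single active index, which changes nothing.

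Where you depart from the paper, your version is the sounder one.

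\emph{Viscous term.} The paper gets $\|\Delta\tilde v_n\|_{C^\alpha}\lesssim(1+O(\epsilon))5^{(1+\alpha)n}$ simply ``by scaling''. This tacitly needs a uniform $C^{2,\alpha}$ bound on $\tilde V_i$, which the $C^1$ stability of Lemma~\ref{2.2} does not supply. Your plan of running Lemmas~\ref{2.6}--\ref{2.8} one derivative higher fills exactly this gap: it uses $h_i\in C^5$ in $s$ through $\tilde\kappa_i$, which the $C^6$ hypothesis gives.

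\emph{Convergence.} The paper asserts $\tilde v^m\to\tilde v$ in $C([0,1];C^1)$ and deduces convergence of $\partial_t\tilde v^m$ and of the nonlinear term. That cannot be right. With $s=(\eta(t)-t_n)/\Delta t_n$ one has $\int_{t_n}^{t_{n+1}}\|\nabla\tilde v(\cdot,t)\|_{L^\infty}\,dt=\int_0^1\|\nabla\tilde v_n(\cdot,s)\|_{L^\infty}\,ds$. Since $\nabla_x\tilde v_n=\nabla_z\tilde V_{i(Q)}$ does not decay in $n$, it follows that $\tilde v\notin L^1([0,1];W^{1,\infty})$, as must be the case for anomalous dissipation. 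Moreover, $C_tC^1_x$ convergence would not control $\partial_t\tilde v^m$ anyway.

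Your direct comparison is the correct proof of convergence in $C([0,1];C^\alpha)$: $\tilde g^m-\tilde g=-\mu_m\Delta\tilde v^m=O(m^{18}5^{(\alpha-1)m})$ on $\{\eta<t_{m+1}\}$, and $\tilde g^m-\tilde g=-\tilde g=O(\sup_{n>m}n^{16}5^{(\alpha-1)n})$ on the tail.

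One small correction: on the tail $\tilde v^m\equiv0$, so $\tilde g^m$ vanishes identically there and there is no viscous remainder.
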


\begin{proof}
We decompose $\tilde{g}^m = \partial_t\tilde{v}^m + \tilde{v}^m\cdot\nabla\tilde{v}^m - \mu_m\Delta\tilde{v}^m$ and estimate each term separately.
Set a time‑dependent mollified characteristic function:
\[
A_n(t) = \chi_{[t_n,t_{n+1})}(\eta(t))\; \eta'(t)\;\frac{1}{\Delta t_n},
\]
then $\operatorname{supp}A_n(t)\subset\eta^{-1}([t_n,t_{n+1}))$. Since $\Delta t_n = t_{n+1}-t_n\sim \frac{1}{n^3}$ and by Lemma \ref{4.1}, the chain rule gives $\|A_n\|_{L^\infty}\le C n^{8}$ and$\|A_n'\|_{L^\infty}\le C n^{13}$.

\emph{Time derivative.} 
\[
\partial_t\tilde{v}^m = \sum_{n=0}^m\Bigl(A_n'(t)\tilde{v}_n(x,\tau_n)+A_n(t)\partial_t\tilde{v}_n(x,\tau_n)\frac{\eta'(t)}{\Delta t_n}\Bigr),\quad \tau_n=\frac{\eta(t)-t_n}{\Delta t_n}.
\]
Applying Theorem~\ref{1.2}, $\|\tilde{v}_n\|_{C^\alpha}\le C5^{(\alpha-1)n}$ and $\|\partial_t\tilde{v}_n\|_{C^\alpha}\le C5^{(\alpha-1)n}$. The series converges, so there exists $C_1$ independent of $m$ such that
\[
\|\partial_t\tilde{v}^m\|_{C^\alpha}\le\sum_{n=0}^m Cn^{16}\bigl(5^{(\alpha-1)n}+O(\epsilon)5^{(\alpha-1)n}\bigr)\le C_1+O(\epsilon)C_1.
\]

\emph{Convective term.} Since $\eta(t)$ lies in a single interval $[t_n,t_{n+1})$, only $n=l$ contributes:
\[
\tilde{v}^m\cdot\nabla\tilde{v}^m = \sum_{n=0}^m A_n(t)^2(\tilde{v}_n\cdot\nabla\tilde{v}_n)(x,\tau_n).
\]
Theorem~\ref{3.1} gives $\|\tilde{v}_n\cdot\nabla\tilde{v}_n\|_{C^\alpha}\le C5^{-(1-\alpha)n}$. Thus
\[
\|\tilde{v}^m\cdot\nabla\tilde{v}^m\|_{C^\alpha}\le\sum_{n=0}^m Cn^{16}\bigl(5^{-(1-\alpha)n}+O(\epsilon)5^{-(1-\alpha)n}\bigr)\le C_2+O(\epsilon)C_2.
\]

\emph{Viscous term.} By scaling, $\|\Delta\tilde{v}_n\|_{C^\alpha}\le C5^{(1+\alpha)n}+O(\epsilon)5^{(1+\alpha)n}$. Using $\|A_n\|_{L^\infty}\le Cn^8$ and $\mu_m=m^{10}5^{-2m}$,
\[
\|\mu_m\Delta\tilde{v}^m\|_{C^\alpha}
\le C m^{10}5^{-2m}\sum_{n=0}^{m} n^{8}\bigl(5^{(1+\alpha)n}+O(\epsilon)5^{(1+\alpha)n}\bigr).
\]
Split the sum into $n=m$ and $n\le m-1$:
\[
m^{10}5^{-2m}\sum_{n=0}^{m} n^{8}5^{(1+\alpha)n}
= m^{18}5^{(\alpha-1)m} + m^{10}5^{-2m}\sum_{n=0}^{m-1} n^{8}5^{(1+\alpha)n}.
\]
Since $\alpha\in(0,1)$, $5^{(\alpha-1)m}$ decays exponentially, so $m^{18}5^{(\alpha-1)m}\le C_3$. For the second term, using $5^{(1+\alpha)n}\le5^{(1+\alpha)(m-1)}$,
\[
m^{10}5^{-2m}\sum_{n=0}^{m-1}n^{8}5^{(1+\alpha)n}
\le C m^{19}5^{(\alpha-1)m-(1+\alpha)}\le C_3.
\]
Therefore,
\[
\|\mu_m\Delta\tilde{v}^m\|_{C^\alpha}\le C_3+O(\epsilon)C_3.
\]

Collecting the three estimates, $\|\tilde{g}^m\|_{C^\alpha}\le C+O(\epsilon)$ with $C=C_1+C_2+C_3$, independent of $m,\epsilon$. For $\epsilon$ small, $O(\epsilon)$ is absorbed into the constant, giving uniform boundedness in $C^\alpha$ after pertubation.

\emph{Convergence.} As $m\to\infty$, $\tilde{v}^m\to\tilde{v}$ in $C([0,1];C^1)$, so $\partial_t\tilde{v}^m\to\partial_t\tilde{v}$ and $\tilde{v}^m\cdot\nabla\tilde{v}^m\to\tilde{v}\cdot\nabla\tilde{v}$ in $C([0,1];C^\alpha)$. Since $\mu_m\to0$, $\mu_m\Delta\tilde{v}^m\to0$. Hence $\tilde{g}^m\to\partial_t\tilde{v}+\tilde{v}\cdot\nabla\tilde{v}= \tilde{g}$ in $C([0,1];C^\alpha)$, completing the proof.
\end{proof}

\section{Stability of the anomalous dissipation}

In this section we prove Theorem \ref{1.3}. First we establish a frequency concentration estimate for $\tilde{\rho}$.

\begin{lemma}\label{4.3}
For $\tilde{\rho}$ defined in \eqref{4-4}, there exist $\Lambda\in(0,1)$ and $\tilde C>0$ depending only on the original geometry such that for all $n\in\mathbb{N}$, $t\in[t_n,t_{n+1})$,
\begin{equation}
    \|\nabla\tilde{\rho}(\cdot,t)\|_{L^2}\le \Lambda^{-1}5^n,
    \label{4-13}
\end{equation}
\begin{equation}
    \|\tilde{\rho}(\cdot,t)\|_{\dot H^{-1}}\le \tilde C\,5^{-n},
    \label{4-14}
\end{equation}
\begin{equation}
    \|P_{\le\Lambda5^n}\tilde{\rho}(\cdot,t)\|_{L^2}\le 10^{-9},
    \label{4-15}
\end{equation}
where $P_{\le\Lambda5^n}$ is the Fourier projection onto frequencies below $\Lambda5^n$.
\end{lemma}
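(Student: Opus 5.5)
For $t\in[t_n,t_{n+1})$ we have $\tilde\rho(\cdot,t)=\tilde\rho_n(\cdot,\tau_n)$ with $\tau_n=(\eta(t)-t_n)/\Delta t_n\in[0,1)$, so all three estimates are statements about the single field $\tilde\rho_n(\cdot,\tau)$ uniformly in $\tau\in[0,1]$. The plan is to prove them first for the unperturbed family $\rho_n$, using the mixing structure of \cite{alberti2019exponential}, and then transfer them to $\tilde\rho_n$ through the stability $\|\tilde\Theta_i-\Theta_i\|_{C^1}\le C\epsilon$ of Lemma~\ref{2.2}, choosing $\epsilon_0$ small enough that the $O(\epsilon)$ losses are absorbed.

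\textbf{Gradient bound \eqref{4-13}.} By Theorem~\ref{1.2}, $\|\nabla\tilde\rho_n\|_{L^\infty}\le (C+O(\epsilon))5^n$. Since $|\mathbb{T}^2|=1$, this gives $\|\nabla\tilde\rho_n\|_{L^2}\le 2C\,5^n$ for $\epsilon\le\epsilon_0$, and it suffices to take $\Lambda\le (2C)^{-1}$.

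\textbf{Negative norm bound \eqref{4-14}.} On each square $Q\in\mathcal Q(2\cdot5^n)$, $\tilde\rho_n$ is a rescaled copy of $\tilde\Theta_{i(Q)}$, which has zero mean (Lemma~\ref{1.1}) and is supported in a compact subset of $Q$ (Step 4 of Theorem~\ref{1.2}). On $[0,1]^2$ we therefore solve $\Delta\psi_i=\tilde\Theta_i$ with Neumann data; this is solvable because the mean vanishes, and $\|\nabla\psi_i\|_{L^2}\le C$ uniformly in $\epsilon$ because $\|\tilde\Theta_i\|_{L^2}=1$. Gluing the rescaled fields $\nabla\psi_{i(Q)}(2\cdot5^n(x-r_Q))/(2\cdot5^n)$ gives a global vector field $F$ with $\operatorname{div}F=\tilde\rho_n$; its normal components match across square boundaries (they vanish there), so no singular divergence appears. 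Hence $\|\tilde\rho_n\|_{\dot H^{-1}}\le\|F\|_{L^2}\le C5^{-n}$.

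\textbf{Low-frequency bound \eqref{4-15}, the main obstacle.} Interpolation alone fails here. From $\|\tilde\rho\|_{L^2}=1$ and \eqref{4-14} one gets only $\|P_{\le\Lambda5^n}\tilde\rho\|_{L^2}\le\Lambda5^n\|\tilde\rho\|_{\dot H^{-1}}\le\tilde C\Lambda$, which is small only once $\Lambda\le10^{-9}/\tilde C$. That is compatible with \eqref{4-13}, since shrinking $\Lambda$ only weakens \eqref{4-13}. So I would fix $\Lambda=\min\{(2C)^{-1},10^{-9}\tilde C^{-1}\}$, where $\tilde C$ is the constant from \eqref{4-14} computed with the $\epsilon$-uniform bound. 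The delicate point is to make sure that $\tilde C$ really is independent of $\epsilon$ and $n$. This requires the Neumann solves to be uniform over the compact family $\{\tilde\Theta_i\}$, which follows from the $C^1$ closeness together with the uniform $L^2$ normalization. If a sharper, $\Lambda$-independent smallness were ever needed, I would instead compare $P_{\le\Lambda5^n}\tilde\rho_n$ with $P_{\le\Lambda5^n}\rho_n$. Their difference is controlled by $\|\tilde\rho_n-\rho_n\|_{L^2}\le C\epsilon$, which follows from Lemma~\ref{2.2} after rescaling, and the unperturbed estimate is the one in \cite{brue2023anomalous}.
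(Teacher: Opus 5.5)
Your proposal is correct and is essentially the paper's proof. For \eqref{4-13} you use the $L^\infty$ gradient bound of Theorem~\ref{1.2} on the unit-measure torus, and for \eqref{4-15} you use $\|P_{\le\Lambda5^n}\tilde\rho\|_{L^2}\le\Lambda5^n\|\tilde\rho\|_{\dot H^{-1}}$ with $\Lambda$ small, both as in the paper. For \eqref{4-14} you write $\tilde\rho_n=\operatorname{div}F$ via Neumann solves on each square; this is the primal form of the paper's duality argument, which subtracts the square averages $\bar g_Q$ of the test function and applies Poincar\'e on each $Q$. In both versions the $\epsilon$-independence of the constant comes only from the zero mean, $\|\tilde\Theta_i\|_{L^2}=1$ and the Poincar\'e constant of the unit square, so the $C^1$ closeness you invoke there is not needed.

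Your explicit choice $\Lambda=\min\{(2C)^{-1},10^{-9}\tilde C^{-1}\}$, with the remark that shrinking $\Lambda$ only weakens \eqref{4-13}, also neatly reconciles the two different choices of $\Lambda$ that the paper makes for \eqref{4-13} and \eqref{4-15}.
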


\begin{proof}
For $t\in[t_n,t_{n+1})$, using Theorem~\ref{1.2} we obtain
\[
\|\nabla\tilde{\rho}(\cdot,t)\|_{L^2}
\le \|\nabla\tilde{\rho}_n(\cdot,\tfrac{\eta(t)-t_n}{\Delta t_n})\|_{L^\infty}
\le C5^n+O(\epsilon)5^n\le \tilde C5^n.
\]
Choosing $\Lambda=\tilde C^{-1}$ yields \eqref{4-13}.

We now estimate $\|\tilde{\rho}_n\|_{\dot H^{-1}}$. By the dual norm definition,
\[
\|\tilde{\rho}_n\|_{\dot H^{-1}} = \sup\left\{ \int_{\mathbb{T}^2} \tilde{\rho}_n g \,dx : \|g\|_{H^1}=1,\ \int g=0 \right\},
\]
with $\int g=0$ and $\|g\|_{H^1}=1$. Decompose the integral over all squares $Q\in\mathcal{Q}(2\cdot5^n)$:
\[
\int \tilde{\rho}_n g = \sum_{Q} \int_Q \tilde{\Theta}_{i(Q)}(t,y)\bigl(2\cdot5^n(x-r_Q)\bigr)\,g(x)\,dx,
\]
where $\tilde{\Theta}_{i(Q)}$ is supported in $[0,1]^2$ with $\int\tilde{\Theta}_{i(Q)}=0$ and $\|\tilde{\Theta}_{i(Q)}\|_{L^2}=1$. Set $z=2\cdot5^n(x-r_Q)$ and $\bar g_Q = \frac1{|Q|}\int_Q g$. By the zero-mean condition $\int\tilde{\Theta}_{i(Q)}=0$,
\[
\int_Q = (2\cdot5^n)^{-2}\int_{[0,1]^2} \tilde{\Theta}_{i(Q)}(z)\,g\!\left(r_Q+\frac{z}{2\cdot5^n}\right)dz= (2\cdot5^n)^{-2}\int_{[0,1]^2} \tilde{\Theta}_{i(Q)}(z)\Bigl(g\!\bigl(r_Q+\tfrac{z}{2\cdot5^n}\bigr)-\bar g_Q\Bigr)dz.
\]
The Cauchy–Schwarz inequality and the $L^2$ normalisation $\|\tilde{\Theta}_{i(Q)}\|_{L^2}=1$ give
\[
\Bigl|\int_Q\Bigr| \le (2\cdot5^n)^{-2}\Bigl\|g(r_Q+\tfrac{\cdot}{2\cdot5^n})-\bar g_Q\Bigr\|_{L^2([0,1]^2)}.
\]
Moreover,
\[
\Bigl\|g\!\bigl(r_Q+\tfrac{\cdot}{2\cdot5^n}\bigr)-\bar g_Q\Bigr\|_{L^2}^2 = (2\cdot5^n)^2\int_Q |g(x)-\bar g_Q|^2 dx.
\]
Applying the Poincaré inequality (Lemma A.3) on the square $Q$ yields
\[
\Bigl(\int_Q |g-\bar g_Q|^2 dx\Bigr)^{1/2} \le C_P\cdot\operatorname{diam}(Q) \Bigl(\int_Q |\nabla g|^2 dx\Bigr)^{1/2}
\le C_P\,(2\cdot5^n)^{-1}\Bigl(\int_Q |\nabla g|^2 dx\Bigr)^{1/2}.
\]
Hence
\[
\Bigl|\int_Q\Bigr| \le (2\cdot5^n)^{-2}\,C_P\,\|\nabla g\|_{L^2(Q)}.
\]
Summing over all squares and using Cauchy–Schwarz inequality,
\[
\Bigl|\int \tilde{\rho}_n g\Bigr| \le C_P(2\cdot5^n)^{-2}\sum_{Q}\|\nabla g\|_{L^2(Q)}
\le C_P(2\cdot5^n)^{-1}\|\nabla g\|_{L^2(\mathbb{T}^2)}.
\]
Since $\|g\|_{H^1}=1$, $\|\nabla g\|_{L^2}\le1$, then $\bigl|\int \tilde{\rho}_n g\bigr| \le C_P(2\cdot5^n)^{-1}$. Taking the supremum yields
\[
\|\tilde{\rho}_n\|_{\dot H^{-1}} \le \frac{C_P}{2}\,5^{-n}.
\]
Set $\tilde C = C_P/2$, which is independent of $n,\epsilon,t$, establishing \eqref{4-14}.

For the low frequencies, using $\|\tilde{\rho}\|_{\dot H^{-1}}^2=\sum_{k\neq0}|k|^{-2}|\hat{\tilde{\rho}}(k)|^2$,
\[
\|P_{\le\Lambda5^n}\tilde{\rho}\|_{L^2}^2
\le (\Lambda5^n)^2\sum_{|k|\le\Lambda5^n}|k|^{-2}|\hat{\tilde{\rho}}(k)|^2
\le (\Lambda5^n)^2\|\tilde{\rho}\|_{\dot H^{-1}}^2.
\]
Substituting \eqref{4-14} yields
\[
\|P_{\le\Lambda5^n}\tilde{\rho}_n(\cdot,t)\|_{L^2} \le \Lambda5^n\|\tilde{\rho}_n(\cdot,t)\|_{\dot H^{-1}} \le  \Lambda\tilde C.
\] 
Choosing $\Lambda=\min(1/2,10^{-9}/\tilde C)$ ensures $\le10^{-9}$, proving \eqref{4-15}.
\end{proof}

For the perturbed velocity field $\tilde{v}^m$, consider
\begin{equation}
\begin{cases}
\partial_t\tilde{\theta}^m+\tilde{v}^m\cdot\nabla\tilde{\theta}^m=\mu_m\Delta\tilde{\theta}^m,\\
\tilde{\theta}^m(x,0)=\tilde\rho(x,0),
\end{cases}
\label{4-17}
\end{equation}
and the corresponding inviscid transport equation
\begin{equation}
\begin{cases}
\partial_t\tilde{\rho}^m+\tilde{v}^m\cdot\nabla\tilde{\rho}^m=0,\\
\tilde{\rho}^m(x,0)=\tilde\rho(x,0),
\end{cases}
\label{4-18}
\end{equation}
where $\tilde{\rho}^m$ is defined by
\[
\tilde{\rho}^m=
\begin{cases}
\tilde{\rho}(x,t), & t\le t_{m+1},\\
\tilde{\rho}(x,t_{m+1}), & t\in (t_{m+1},1].
\end{cases}
\]

For $\tilde{v}^m$ and the solution $\tilde{\theta}^m$ of \eqref{4-17}, we need to prove
\begin{equation}
\liminf_{m\to\infty}\mu_m\int_0^1\|\nabla\tilde{\theta}^m(\cdot,t)\|_{L^2(\mathbb{T}^2)}^2\,dt \ge C_*>0,
\label{4-19}
\end{equation}
with $C_*$ independent of $m,\epsilon$. For sufficiently small $\epsilon$, the lower bound remains positive. This estimate will be used in the $(2+\frac12)$-dimensional Navier–Stokes equations.

\begin{lemma}\label{4.4}
Let $\tilde{\theta}^m$ be the solution of \eqref{4-17} and $\tilde{\rho}^m$ the solution of \eqref{4-18}. Then for every $t\in[0,t_{m+1})$,
\begin{equation}
\sup_{0\le s\le t}\|\tilde{\rho}^m(\cdot,s)-\tilde{\theta}^m(\cdot,s)\|_{L^2}^2
\le \left(2\mu_m\int_0^t\|\nabla\tilde{\theta}^m\|_{L^2}^2\,ds\right)^{1/2}
\left(2\mu_m\int_0^t\|\nabla\tilde{\rho}^m\|_{L^2}^2\,ds\right)^{1/2}.
\label{4-20}
\end{equation}
\end{lemma}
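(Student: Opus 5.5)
The plan is the standard energy estimate for the difference $w:=\tilde\rho^m-\tilde\theta^m$, following \cite{brue2023anomalous}. On $[0,t_{m+1})$, $\tilde\rho^m=\tilde\rho$ solves the transport equation \eqref{4-18} with the same divergence-free field $\tilde v^m$. Subtracting \eqref{4-17} from \eqref{4-18} gives
\[
\partial_t w+\tilde v^m\cdot\nabla w=-\mu_m\Delta\tilde\theta^m,\qquad w(\cdot,0)=0 .
\]
Both fields are smooth in $C^\infty([0,1];C^3(\mathbb{T}^2))$ on this interval by Theorem~\ref{1.2} and Section~4. I will therefore multiply by $w$ and integrate over $\mathbb{T}^2$. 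The transport term vanishes since $\operatorname{div}\tilde v^m=0$. Integrating the viscous term by parts gives
\[
\frac12\frac{d}{ds}\|w\|_{L^2}^2=\mu_m\int\nabla\tilde\theta^m\cdot\nabla w\,dx
=\mu_m\int\nabla\tilde\theta^m\cdot\nabla\tilde\rho^m\,dx-\mu_m\|\nabla\tilde\theta^m\|_{L^2}^2 .
\]

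Next I discard the nonpositive term $-\mu_m\|\nabla\tilde\theta^m\|_{L^2}^2$. Integrating from $0$ to $s\le t$ and using $w(0)=0$ gives
\[
\|w(s)\|_{L^2}^2\le 2\mu_m\int_0^s\!\!\int\nabla\tilde\theta^m\cdot\nabla\tilde\rho^m\,dx\,d\sigma .
\]
Cauchy--Schwarz in space and time bounds the right-hand side by
\[
\Bigl(2\mu_m\int_0^t\|\nabla\tilde\theta^m\|_{L^2}^2\Bigr)^{1/2}\Bigl(2\mu_m\int_0^t\|\nabla\tilde\rho^m\|_{L^2}^2\Bigr)^{1/2}.
\]
This bound is monotone in $s$, so taking $\sup_{0\le s\le t}$ yields \eqref{4-20}.

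The only delicate point is justifying the computation, namely that $\tilde\rho^m$ really solves \eqref{4-18} with velocity $\tilde v^m$ up to $t_{m+1}$. On each $\eta^{-1}([t_n,t_{n+1}))$ with $n\le m$, the chain rule with the reparametrisation $\tau_n=(\eta(t)-t_n)/\Delta t_n$ gives
\[
\partial_t\tilde\rho=\frac{\eta'}{\Delta t_n}\,\partial_\tau\tilde\rho_n
=-\frac{\eta'}{\Delta t_n}\,\tilde v_n\cdot\nabla\tilde\rho_n=-\tilde v^m\cdot\nabla\tilde\rho .
\]
Here I use that $\tilde\rho_n$ is transported by $\tilde v_n$, which holds blockwise by \eqref{1.2-1} and the scaling in \eqref{3-1}--\eqref{3-2}. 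Continuity across the junctions $t_n$ follows from the recurrence \eqref{1.3-5} together with $\eta'(t_n)=0$. The smoothness of $\tilde\rho^m$ and $\tilde\theta^m$ makes the integrations by parts on $\mathbb{T}^2$ legitimate, and the time integrals are finite because $t<t_{m+1}$ involves only finitely many blocks.
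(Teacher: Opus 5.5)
Your proposal is correct and follows essentially the same argument as the paper: an energy estimate for $\tilde\rho^m-\tilde\theta^m$, in which incompressibility kills the transport term, you drop $-\mu_m\|\nabla\tilde\theta^m\|_{L^2}^2$, and you apply Cauchy--Schwarz in space-time. Your sign $-\mu_m\Delta\tilde\theta^m$ in the difference equation is the correct one (the paper's intermediate display has a sign slip but reaches the same identity), and your chain-rule check that $\tilde\rho$ is transported by $\tilde v^m$ on $[0,t_{m+1})$ supplies a justification the paper leaves implicit.
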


\begin{proof}
Define $\omega:=\tilde{\rho}^m-\tilde{\theta}^m$. Subtracting the two equations yields
\[
\partial_t\omega+\tilde{v}^m\cdot\nabla\omega=\mu_m\Delta\tilde{\theta}^m,\qquad \omega(\cdot,0)=0.
\]
Forming the $L^2(\mathbb{T}^2)$ inner product with $\omega$ gives
\[
\frac12\frac{d}{dt}\|\omega\|_{L^2}^2+\int(\tilde{v}^m\cdot\nabla\omega)\omega\,dx
= \mu_m\int(\Delta\tilde{\theta}^m)\omega\,dx.
\]
Since $\operatorname{div}\tilde{v}^m=0$ and by the divergence theorem, the convective term vanishes. Hence
\[
\frac12\frac{d}{dt}\|\omega\|_{L^2}^2 = -\mu_m\|\nabla\tilde{\theta}^m\|_{L^2}^2+\mu_m\int\nabla\tilde{\theta}^m\cdot\nabla\tilde{\rho}^m\,dx.
\]
Integrating in time and using $\omega(\cdot,0)=0$ together with the Cauchy–Schwarz inequality,
\[
\|\omega\|_{L^2}^2 \le \Bigl(2\mu_m\int_0^t\|\nabla\tilde{\theta}^m\|_{L^2}^2\,ds\Bigr)^{\!1/2}
\Bigl(2\mu_m\int_0^t\|\nabla\tilde{\rho}^m\|_{L^2}^2\,ds\Bigr)^{\!1/2}.
\]
Taking the supremum over $s\in[0,t]$ completes the proof.
\end{proof}

We now prove the stability of the anomalous dissipation under perturbation.

\begin{proof}[Proof of Theorem \ref{1.3}]
The proof proceeds by contradiction. Assume \eqref{4-19} fails. Then there exists a subsequence $\tilde{\theta}^m$ such that
\begin{equation}
2\mu_m\int_0^1\|\nabla\tilde{\theta}^m\|_{L^2}^2\,dt \to 0\quad (m\to\infty),
\label{4-21}
\end{equation}
i.e., for any $\delta>0$ there are infinitely many $m$ satisfying
\begin{equation}
2\mu_m\int_0^1\|\nabla\tilde{\theta}^m\|_{L^2}^2\,dt \le \delta^2.
\label{4-22}
\end{equation}
Using the gradient estimates for $\rho_n$,
\[
2\mu_m\int_0^{t_{m+1}}\int_{\mathbb{T}^2}|\nabla\tilde{\rho}|^2 \ge 2\mu_m\int_{t_m}^{t_{m+1}}|\nabla\tilde{\rho}_n|^2 \ge \tilde C\mu_m5^{2m} = \tilde C m^{10}.
\]
For $m$ large enough, this exceeds $1$. Hence there exists $t^*\in(0,t_{m+1})$ such that
\begin{equation}
2\mu_m\int_0^{t^*}\int_{\mathbb{T}^2}|\nabla\tilde{\rho}(x,s)|^2dxds = 1.
\label{4-23}
\end{equation}
Applying Lemma \ref{4.4} together with \eqref{4-22} and \eqref{4-23} yields
\begin{equation}
\sup_{0\le s\le t^*}\|\tilde{\rho}^m(\cdot,s)-\tilde{\theta}^m(\cdot,s)\|_{L^2} \le \delta.
\label{4-24}
\end{equation}

For any $k\le m$ with $t_k<t^*$ and $s\in[t_k,t_{k+1}]$, the triangle inequality gives
\[
\|P_{>\Lambda5^k}\tilde{\theta}^m\|_{L^2}
\ge \|P_{>\Lambda5^k}\tilde{\rho}^m\|_{L^2} - \|\tilde{\rho}^m-\tilde{\theta}^m\|_{L^2}.
\]
By Lemma \ref{4.3}, $\|P_{>\Lambda5^k}\tilde{\rho}^m\|_{L^2}^2 \ge 1-10^{-18}\ge 1/2$, and by \eqref{4-24}, $\|\tilde{\rho}^m-\tilde{\theta}^m\|_{L^2}\le\delta$. Choosing $\delta$ small yields
\[
\|P_{>\Lambda5^k}\tilde{\theta}^m\|_{L^2}\ge \frac12.
\]
Since $\|\nabla P_{>\Lambda5^k}\tilde{\theta}^m\|_{L^2} \ge \Lambda5^k\|P_{>\Lambda5^k}\tilde{\theta}^m\|_{L^2}$, we obtain
\[
\|\nabla\tilde{\theta}^m(\cdot,s)\|_{L^2} \ge \frac{\Lambda}{2}\,5^k,\qquad \forall s\in[t_k,t_{k+1}],\ k\le m-1.
\]
Integrating in time,
\begin{equation}
2\mu_m\int_0^{t^*}\|\nabla\tilde{\theta}^m\|_{L^2}^2\,ds
\ge 2\mu_m\sum_{k=0}^{m-1}\int_{t_k}^{t_{k+1}}\Bigl(\frac{\Lambda}{2}5^k\Bigr)^2 ds
= \frac{\Lambda^2}{2}\,\mu_m\sum_{k=0}^{m-1}5^{2k}\Delta t_k.
\label{4-25}
\end{equation}

From $2\mu_m\int_0^{t^*}|\nabla\tilde{\rho}|^2=1$ and the gradient estimate for $\tilde{\rho}$, we obtain
\[
\mu_m\sum_{k=0}^{m-1}5^{2k}\Delta t_k \ge \frac{1}{2\tilde C^2}.
\]
Substituting into \eqref{4-25} gives
\begin{equation}
2\mu_m\int_0^{t^*}\|\nabla\tilde{\theta}^m\|_{L^2}^2\,ds \ge \frac{\Lambda^2}{2}\cdot\frac{1}{2\tilde C^2}
= \frac{\Lambda^2}{4\tilde C^2}.
\label{4-26}
\end{equation}

For sufficiently small $\delta$, \eqref{4-22} contradicts \eqref{4-26}. Hence there exists a constant $C_* = \frac{\Lambda^2}{4\tilde C^2} > 0$ such that
\begin{equation}
\liminf_{m\to\infty}\mu_m\int_0^1\|\nabla\tilde{\theta}^m\|_{L^2}^2\,dt \ge C_*>0,
\end{equation}
which completes the proof of Theorem \ref{1.3}.
\end{proof}

\section{Stability of the anomalous dissipation for the forced 3D Navier–Stokes equations}

We now apply Theorem \ref{1.3} to the $(2+\frac12)$-dimensional framework to complete the proof of Theorem \ref{1.4}.

\begin{proof}[Proof of Theorem \ref{1.4}]
On $\mathbb{T}^3$ with coordinates $x=(x_1,x_2,x_3)$, define
\begin{align*}
\tilde{u}^{\mu_m}(x,t) &= \bigl(\tilde{v}^m(x_1,x_2,t),\; \tilde{\theta}^m(x_1,x_2,t)\bigr),\\
\tilde{f}^{\mu_m}(x,t) &= \bigl(\tilde{g}^m(x_1,x_2,t),\; 0\bigr),\\
\tilde{p}^{\mu_m}(x,t) &= \tilde{q}^m(x_1,x_2,t),
\end{align*}
where $\tilde{v}^m,\tilde{g}^m,\tilde{\theta}^m$ are the 2D fields and $\tilde{q}^m$ is the 2D pressure. All quantities are independent of $x_3$. The system
\begin{equation}
\begin{cases}
\partial_t \tilde v^{m} + \tilde v^{m} \cdot \nabla \tilde v^{m} + \nabla \tilde q^{m} = \mu_m \Delta \tilde {v}^m + \tilde g^{m}, \\
\operatorname{div} \tilde v^{m} = 0,\\
\partial_t \tilde \theta^{m} + \tilde v^{m} \cdot \nabla \tilde \theta^{m} = \mu_m \Delta \tilde \theta^m,
\end{cases}
\end{equation}
reduces the 3D Navier–Stokes equations to a coupled 2D system. Since the perturbation functions $h_i$ have zero initial values, the 3D initial datum $\tilde{u}^{\mu_m}(x,0)=(v_0,\theta_0)\in C^{\infty}(\mathbb{T}^3)$ is independent of $m$.

\noindent\textbf{Regularity.} By Theorem \ref{4.2}, $\tilde{g}^m$ is uniformly bounded in $C([0,1];C^\alpha(\mathbb{T}^2))$ and converges to $\tilde{g}^m$. Since $\tilde{f}^{\mu_m}=(\tilde{g}^m,0)$ has no $x_3$ component, then we obtain
\begin{equation}
\|\tilde{f}^{\mu_m}\|_{C([0,1];C^\alpha(\mathbb{T}^3))}\le \|\tilde{g}^m\|_{C([0,1];C^\alpha(\mathbb{T}^2))}\le\tilde C(\alpha),
\end{equation}
with $\tilde C(\alpha)$ independent of $m,\epsilon$.

\noindent\textbf{Stability.} Define $\varepsilon_m:=\mu_m\int_0^1\int_{\mathbb{T}^3}|\nabla\tilde{u}^{\mu_m}|^2$. Independence of $x_3$ gives $|\nabla\tilde{u}^{\mu_m}|^2=|\nabla_{x_1,x_2}\tilde{v}^m|^2+|\nabla_{x_1,x_2}\tilde{\theta}^m|^2$, hence
\[
\varepsilon_m\ge\mu_m\int_0^1\int_{\mathbb{T}^2}|\nabla\tilde{\theta}^m|^2.
\]
Theorem \ref{1.3} provides $C_*>0$ with $\liminf_{m\to\infty}\mu_m\int_0^1\int_{\mathbb{T}^2}|\nabla\tilde{\theta}^m|^2\ge C_*$. Therefore
\[
\liminf_{m\to\infty}\mu_m\int_0^1\|\nabla\tilde{u}^{\mu_m}\|_{L^2(\mathbb{T}^3)}^2\ge C_*,
\]
completing the proof of Theorem \ref{1.4}.
\end{proof}

Thus, under a sufficiently small $C^6$ pure normal perturbation ($\epsilon\le\epsilon_0$) applied to the original central curves, the perturbed 3D Navier–Stokes solutions retain anomalous dissipation with a lower bound independent of $\epsilon$. This establishes the structural stability of this class of 3D Navier–Stokes solutions .

\appendix
\section{Technical estimates}

\begin{lemma}[Lipschitz estimate for matrix inversion]\label{A.1}
Let $A,B\in\mathbb{R}^{2\times2}$ be invertible with a constant $K>0$ such that $\|A^{-1}\|\le K$ and $\|B^{-1}\|\le K$. Then
\begin{equation}
    \|A^{-1} - B^{-1}\| \le K^2 \|A - B\|.
\end{equation}
\end{lemma}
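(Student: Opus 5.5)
The natural route is the resolvent identity. Since $A$ and $B$ are both invertible, I will first write
\[
A^{-1}-B^{-1} = A^{-1}(B-A)B^{-1},
\]
which holds because $A^{-1}(B-A)B^{-1} = A^{-1}BB^{-1} - A^{-1}AB^{-1} = A^{-1}-B^{-1}$. I will then take the operator norm and use its submultiplicativity to get
\[
\|A^{-1}-B^{-1}\| \le \|A^{-1}\|\,\|B-A\|\,\|B^{-1}\|.
\]
The hypotheses $\|A^{-1}\|\le K$ and $\|B^{-1}\|\le K$ then give the bound $K^2\|A-B\|$ directly.

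The one point to check is which matrix norm $\|\cdot\|$ denotes, since submultiplicativity is the only property used. I will take it to be the operator norm induced by the Euclidean norm on $\mathbb{R}^2$; the Frobenius norm would also work, as it is submultiplicative too. Any submultiplicative norm suffices. The dimension $2$ plays no role, so the same argument holds verbatim for $n\times n$ matrices.

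In the application in Lemma \ref{2.8}, the needed uniform bound $K$ on the inverse Jacobians comes from $\det\nabla\Phi_i=\det\nabla\tilde\Phi_i=1$ combined with uniform bounds on $\|\nabla\Phi_i\|$ and $\|\nabla\tilde\Phi_i\|$. For a $2\times2$ matrix with unit determinant, the inverse is the adjugate, so its norm equals the norm of the matrix itself. That verification belongs to the caller, not to this lemma. There is no real obstacle here: the proof is a one-line identity followed by a norm estimate.
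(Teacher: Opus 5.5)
Your proof is correct and is the same as the paper's: the identity $A^{-1}-B^{-1}=A^{-1}(B-A)B^{-1}$, then submultiplicativity of the norm together with $\|A^{-1}\|,\|B^{-1}\|\le K$. Your side remarks are also accurate. Any submultiplicative norm works, and for unit-determinant $2\times 2$ matrices the inverse has the same Euclidean operator norm. These go beyond what the paper's proof states but are not needed for the lemma itself.
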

\begin{proof}
$A^{-1}-B^{-1}=A^{-1}(B-A)B^{-1}$, so taking norms gives
\[
\|A^{-1} - B^{-1}\| \le \|A^{-1}\|\cdot\|B - A\|\cdot\|B^{-1}\| \le K \cdot \|A - B\| \cdot K = K^2 \|A - B\|.
\]
\end{proof}

\begin{lemma}[Stability estimate for the gradient of the inverse map]\label{A.2}
Let $\Phi, \tilde{\Phi}: \Omega \to \mathbb{R}^2$ be $C^2$ diffeomorphisms defined on an open set $\Omega\subset\mathbb{R}^2$ with $\det\nabla\Phi = \det\nabla\tilde{\Phi} = 1$, and assume there exists a constant $M>0$ such that
\[
\|\nabla\Phi\|_{C^0(\Omega)} \le M,\qquad \|(\nabla\Phi)^{-1}\|_{C^0(\Omega)} \le M.
\]
If $\|\nabla\tilde{\Phi} - \nabla\Phi\|_{C^0(\Omega)} \le \epsilon$ with $\epsilon$ sufficiently small, then there exists $C=C(M)$ such that
\begin{equation}
    \|(\nabla\tilde{\Phi})^{-1} - (\nabla\Phi)^{-1}\|_{C^0(\Omega)} \le C\epsilon.
\end{equation}
\end{lemma}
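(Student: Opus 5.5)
The natural route is to reduce the statement to Lemma~\ref{A.1}. First, show that $(\nabla\tilde\Phi)^{-1}$ also has a uniform bound. Then, at each point of $\Omega$, apply Lemma~\ref{A.1} with $A=\nabla\tilde\Phi(x)$ and $B=\nabla\Phi(x)$, and take the supremum over $x\in\Omega$.

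\textbf{Step 1 (uniform bound for $(\nabla\tilde\Phi)^{-1}$).} I would use the $2\times 2$ structure together with $\det\nabla\tilde\Phi=1$. For a $2\times 2$ matrix $A$ with $\det A=1$, the inverse is the cofactor matrix,
\[
A^{-1}=\begin{pmatrix} a_{22} & -a_{12}\\ -a_{21} & a_{11}\end{pmatrix},
\]
so $\|A^{-1}\|=\|A\|$ in the Frobenius norm, and in the operator norm up to a universal constant. Since $\|\nabla\tilde\Phi\|_{C^0}\le M+\epsilon\le M+1$ for $\epsilon\le 1$, we get $\|(\nabla\tilde\Phi)^{-1}\|_{C^0}\le c(M+1)$.

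As a backup that avoids using the determinant condition, I would write $\nabla\tilde\Phi=\nabla\Phi\bigl(I+(\nabla\Phi)^{-1}(\nabla\tilde\Phi-\nabla\Phi)\bigr)$. The perturbation term has norm at most $M\epsilon$, so for $M\epsilon\le\frac12$ a Neumann series gives $\|(\nabla\tilde\Phi)^{-1}\|\le 2M$. This is where the hypothesis that $\epsilon$ is sufficiently small, depending on $M$, enters.

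\textbf{Step 2 (pointwise estimate).} Set $K=\max\{M,2M\}=2M$, or $K=c(M+1)$ if we use the first argument. Lemma~\ref{A.1} then gives
\[
\|(\nabla\tilde\Phi)^{-1}(x)-(\nabla\Phi)^{-1}(x)\|\le K^2\|\nabla\tilde\Phi(x)-\nabla\Phi(x)\|\le K^2\epsilon
\]
for every $x\in\Omega$. Taking the supremum over $\Omega$ yields the claim with $C=K^2=4M^2$.

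I expect no real obstacle here. The only point needing care is Step~1: the uniform invertibility of $\nabla\tilde\Phi$ must be derived rather than assumed, which the Neumann-series argument handles under the smallness condition on $\epsilon$.
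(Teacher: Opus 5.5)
Your proof is correct, but it is organized differently from the paper's.

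The paper never bounds $(\nabla\tilde\Phi)^{-1}$ on its own. It writes $(\nabla\tilde\Phi)^{-1}=(\nabla\Phi)^{-1}\sum_{k\ge0}E^k$ with $E=(\nabla\Phi-\nabla\tilde\Phi)(\nabla\Phi)^{-1}$, uses $\|E\|\le M\epsilon\le\frac12$, and sums the tail $k\ge1$ directly to get $C=2M^2$.

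You use the Neumann series (or the determinant) only for the a priori bound $\|(\nabla\tilde\Phi)^{-1}\|\le 2M$. The resolvent identity of Lemma~\ref{A.1} then does the comparison. This is more modular and reuses the appendix, at the price of $C=4M^2$. Using the separate bounds $2M$ and $M$ in $A^{-1}(B-A)B^{-1}$, instead of a common $K$, recovers $2M^2$.

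Your cofactor variant is the more interesting one. It is the only argument that uses the hypothesis $\det\nabla\Phi=\det\nabla\tilde\Phi=1$; the paper's proof ignores it, as it also ignores the bound on $\|\nabla\Phi\|$. Pushed one step further, it makes the lemma trivial. The adjugate is linear on $2\times2$ matrices, and $\operatorname{adj}A=JA^{T}J^{-1}$ with $J$ the rotation by $\pi/2$. Hence
\[
(\nabla\tilde\Phi)^{-1}-(\nabla\Phi)^{-1}=\operatorname{adj}(\nabla\tilde\Phi-\nabla\Phi),\qquad \bigl\|\operatorname{adj}(\nabla\tilde\Phi-\nabla\Phi)\bigr\|=\|\nabla\tilde\Phi-\nabla\Phi\|,
\]
which gives the estimate with $C=1$ for every $\epsilon$. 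No smallness assumption is needed, and there is no dependence on $M$. In particular $\|A^{-1}\|=\|A\|$ holds exactly in the operator norm when $\det A=1$, so the ``universal constant'' in your Step 1 is just $1$.
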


\begin{proof}
The Neumann series yields
\[
(\nabla\tilde{\Phi})^{-1}=(\nabla\Phi)^{-1}\sum_{k=0}^\infty\bigl[(\nabla\Phi-\nabla\tilde{\Phi})(\nabla\Phi)^{-1}\bigr]^k,
\]
hence
\[
(\nabla\tilde{\Phi})^{-1}-(\nabla\Phi)^{-1}=(\nabla\Phi)^{-1}\sum_{k=1}^\infty\bigl[(\nabla\Phi-\nabla\tilde{\Phi})(\nabla\Phi)^{-1}\bigr]^k.
\]
Taking norms and using $\|(\nabla\Phi)^{-1}\|\le M$,
\[
\|(\nabla\tilde{\Phi})^{-1}-(\nabla\Phi)^{-1}\|
\le M\cdot\frac{\|(\nabla\Phi-\nabla\tilde{\Phi})(\nabla\Phi)^{-1}\|}{1-\|(\nabla\Phi-\nabla\tilde{\Phi})(\nabla\Phi)^{-1}\|}.
\]
Since $\|(\nabla\Phi-\nabla\tilde{\Phi})(\nabla\Phi)^{-1}\|\le\|\nabla\Phi-\nabla\tilde{\Phi}\|\cdot\|(\nabla\Phi)^{-1}\|\le\epsilon M\le1/2$ ($\epsilon\le1/(2M)$),
\[
\|(\nabla\tilde{\Phi})^{-1}-(\nabla\Phi)^{-1}\|\le M\cdot\frac{\epsilon M}{1-1/2}=2M^2\epsilon.
\]
Setting $C = 2M^2$ completes the proof.
\end{proof}

\begin{lemma}[Poincaré inequality]\label{A.3}
Let $Q\subset\mathbb{R}^2$ be an open square of side length $L$. For $g\in H^1(Q)$ define its average by $\bar{g}_Q = \frac{1}{|Q|} \int_Q g(x)\,dx$, then there exists a constant $C_P>0$ independent of $L$ such that
\begin{equation}
    \|g - \bar{g}_Q\|_{L^2(Q)} \le C_P \cdot \operatorname{diam}(Q) \,\|\nabla g\|_{L^2(Q)},
\end{equation}
where $\operatorname{diam}(Q)=\sqrt{2}L$.
\end{lemma}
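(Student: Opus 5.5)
We treat this as the standard Poincaré–Wirtinger inequality on a square combined with a scaling argument. The content is that the constant $C_P$ is independent of the side length $L$. We first prove the inequality on the unit square $Q_0=(0,1)^2$ with some constant $C_0$, and then transfer it to an arbitrary square by translation and dilation.

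\textbf{Step 1 (unit square).} On $Q_0$ we argue by compactness. Suppose no constant $C_0$ works. Then there are $g_k\in H^1(Q_0)$ with $\bar g_k=0$, $\|g_k\|_{L^2}=1$ and $\|\nabla g_k\|_{L^2}\to0$. The sequence is bounded in $H^1(Q_0)$. Since $Q_0$ is a bounded Lipschitz domain, Rellich--Kondrachov gives a subsequence with $g_k\to g$ in $L^2$ and $g_k\rightharpoonup g$ weakly in $H^1$. By weak lower semicontinuity $\nabla g=0$, so $g$ is constant because $Q_0$ is connected. Since $\bar g=0$, the constant is zero. This contradicts $\|g\|_{L^2}=1$.

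Alternatively, one can avoid compactness and get an explicit constant. For $g\in C^1(\overline{Q_0})$, write
\[
g(x)-g(y)=\int_0^1 \partial_1 g\bigl(x_1+\tau(y_1-x_1),x_2\bigr)(x_1-y_1)\,d\tau+\dots
\]
along an axis-parallel path. Then average over $y$ and apply Cauchy--Schwarz, and extend to $H^1$ by density. Either route is routine. The compactness one is the cleaner to write. In both cases, $C_P := C_0/\sqrt2$ then absorbs the factor $\operatorname{diam}(Q_0)=\sqrt2$.

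\textbf{Step 2 (scaling).} For a square $Q=r+LQ_0$, set $\tilde g(z)=g(r+Lz)$ for $z\in Q_0$. The change of variables $x=r+Lz$ gives $dx=L^2dz$, $\bar{\tilde g}_{Q_0}=\bar g_Q$ and $\nabla_z\tilde g=L\nabla_x g$. Hence
\[
\|g-\bar g_Q\|_{L^2(Q)}=L\|\tilde g-\bar{\tilde g}\|_{L^2(Q_0)},\qquad
\|\nabla_z\tilde g\|_{L^2(Q_0)}=L\cdot L^{-1}\|\nabla g\|_{L^2(Q)}\cdot L=L\|\nabla g\|_{L^2(Q)}.
\]
Applying Step 1 to $\tilde g$ yields
\[
\|g-\bar g_Q\|_{L^2(Q)}\le C_0L\|\nabla g\|_{L^2(Q)}=C_P\operatorname{diam}(Q)\|\nabla g\|_{L^2(Q)},
\]
with $C_P$ independent of $L$. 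Rotated squares are handled identically, since the norms are rotation invariant.

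\textbf{Main obstacle.} The only nontrivial ingredient is the compact embedding, or the explicit path-integral estimate, in Step 1. The scaling bookkeeping in Step 2 is where care is needed, namely tracking the powers of $L$ from $dx=L^2dz$ in both norms so that exactly one factor of $L$ survives.
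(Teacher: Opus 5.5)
Your strategy is the paper's: dilate to the unit square and apply Poincaré--Wirtinger there. The paper simply calls that inequality classical, while your Rellich--Kondrachov argument (or the path-integral alternative) proves it, which is fine. The problem is the gradient scaling identity in Step 2, and that bookkeeping is exactly where the content of the lemma lies. Since $\nabla_z\tilde g(z)=L(\nabla g)(r+Lz)$ and $dz=L^{-2}dx$,
\[
\|\nabla_z\tilde g\|_{L^2(Q_0)}^2=L^2\int_{Q_0}|\nabla g(r+Lz)|^2\,dz=L^2\cdot L^{-2}\int_Q|\nabla g|^2\,dx=\|\nabla g\|_{L^2(Q)}^2 .
\]
In two dimensions the Dirichlet integral is dilation invariant, so $\|\nabla_z\tilde g\|_{L^2(Q_0)}=\|\nabla g\|_{L^2(Q)}$, not $L\|\nabla g\|_{L^2(Q)}$. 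Your product $L\cdot L^{-1}\cdot L$ contains a spurious extra factor of $L$.

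As written, your two displayed identities combined with Step 1 give
\[
\|g-\bar g_Q\|_{L^2(Q)}=L\|\tilde g-\bar{\tilde g}_{Q_0}\|_{L^2(Q_0)}\le C_0L\|\nabla_z\tilde g\|_{L^2(Q_0)}=C_0L^2\|\nabla g\|_{L^2(Q)} .
\]
This is not the claimed bound, and it is dimensionally wrong: in the plane $\|g-\bar g_Q\|_{L^2(Q)}$ scales like one power of length times $\|\nabla g\|_{L^2(Q)}$. So your final line $C_0L\|\nabla g\|_{L^2(Q)}$ does not follow from what precedes it. With the corrected identity, the single factor of $L$ comes entirely from the $L^2$ norm of $g-\bar g_Q$, matching the paper's two scaling relations. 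The proof then closes with $C_P=C_0/\sqrt2$.
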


\begin{proof}
Scale $Q$ to the unit square $Q_0=(0,1)^2$ by $x=Lz$ and define $\tilde{g}(z)=g(Lz)$. Then $\bar{g}_Q=\bar{\tilde{g}}_{Q_0}$ and
\[
\|g-\bar{g}_Q\|_{L^2(Q)}^2=L^2\|\tilde{g}-\bar{\tilde{g}}_{Q_0}\|_{L^2(Q_0)}^2,\quad
\|\nabla g\|_{L^2(Q)}^2=\|\nabla_z\tilde{g}\|_{L^2(Q_0)}^2.
\]
The classical Poincaré inequality on $Q_0$ gives
\[
\|\tilde{g} - \bar{\tilde{g}}_{Q_0}\|_{L^2(Q_0)} \le C \|\nabla_z \tilde{g}\|_{L^2(Q_0)},
\]
with $C$ independent of $L$. Consequently,
\[
\|g - \bar{g}_Q\|_{L^2(Q)} \le L C \|\nabla g\|_{L^2(Q)}.
\]
Substituting $L = \operatorname{diam}(Q)/\sqrt{2}$ yields the desired inequality by applying $C_P = C/\sqrt{2}$.
\end{proof}

\bibliographystyle{abbrv}
\bibliography{anomalous}

@article{brue2023anomalous,
  title={Anomalous dissipation for the forced 3D Navier--Stokes equations},
  author={Bru{\`e}, Elia and De Lellis, Camillo},
  journal={Communications in Mathematical Physics},
  volume={400},
  number={3},
  pages={1507--1533},
  year={2023},
  publisher={Springer}
}

@article{alberti2019exponential,
  title={Exponential self-similar mixing by incompressible flows},
  author={Alberti, Giovanni and Crippa, Gianluca and Mazzucato, Anna},
  journal={Journal of the American Mathematical Society},
  volume={32},
  number={2},
  pages={445--490},
  year={2019}
}

@article{drivas2022anomalous,
  title={Anomalous dissipation in passive scalar transport},
  author={Drivas, Theodore D and Elgindi, Tarek M and Iyer, Gautam and Jeong, In-Jee},
  journal={Archive for Rational Mechanics and Analysis},
  volume={243},
  number={3},
  pages={1151--1180},
  year={2022},
  publisher={Springer}
}

@article{jeong2022quasi,
  title={Quasi-streamwise vortices and enhanced dissipation for incompressible 3D Navier--Stokes equations},
  author={Jeong, In-Jee and Yoneda, Tsuyoshi},
  journal={Proceedings of the American Mathematical Society},
  volume={150},
  number={3},
  pages={1279--1286},
  year={2022}
}

@article{jeong2021vortex,
  title={Vortex stretching and enhanced dissipation for the incompressible 3D Navier--Stokes equations},
  author={Jeong, In-Jee and Yoneda, Tsuyoshi},
  journal={Mathematische Annalen},
  volume={380},
  number={3},
  pages={2041--2072},
  year={2021},
  publisher={Springer}
}

@inproceedings{kolmogorov1941dissipation,
  title={Dissipation of energy in the locally isotropic turbulence},
  author={Kolmogorov, Andrey Nikolaevich},
  booktitle={Dokl. Akad. Nauk. SSSR},
  volume={32},
  pages={19--21},
  year={1941}
}

@article{kolmogorov1941local,
  title={The local structure of turbulence in incompressible viscous fluid for very large Reynolds number},
  author={Kolmogorov, Andrey Nikolaevich},
  journal={Cr Acad. Sci. USSR},
  volume={30},
  pages={301},
  year={1941}
}

@inproceedings{kolmogorov1941degeneration,
  title={On degeneration (decay) of isotropic turbulence in an incompressible viscous liquid},
  author={Kolmogorov, Andrej Nikolaevich},
  booktitle={Dokl. Akad. Nauk SSSR},
  volume={31},
  pages={538--540},
  year={1941}
}

@article{kaneda2003energy,
  title={Energy dissipation rate and energy spectrum in high resolution direct numerical simulations of turbulence in a periodic box},
  author={Kaneda, Yukio and Ishihara, Takashi and Yokokawa, Mitsuo and Itakura, Ken’ichi and Uno, Atsuya},
  journal={Physics of Fluids},
  volume={15},
  number={2},
  pages={L21--L24},
  year={2003},
  publisher={American Institute of Physics}
}

@article{sreenivasan1998update,
  title={An update on the energy dissipation rate in isotropic turbulence},
  author={Sreenivasan, Katepalli R},
  journal={Physics of Fluids},
  volume={10},
  number={2},
  pages={528--529},
  year={1998},
  publisher={American Institute of Physics}
}

@article{onsager1949statistical,
  title={Statistical hydrodynamics},
  author={Onsager, Lars},
  journal={Il Nuovo Cimento (1943-1954)},
  volume={6},
  number={Suppl 2},
  pages={279--287},
  year={1949},
  publisher={Societ{\`a} Italiana di Fisica Bologna}
}

@book{giga2006surface,
  title={Surface evolution equations: A level set approach},
  author={Giga, Yoshikazu},
  year={2006},
  publisher={Springer}
}

@article{deckelnick2005computation,
  title={Computation of geometric partial differential equations and mean curvature flow},
  author={Deckelnick, Klaus and Dziuk, Gerhard and Elliott, Charles M},
  journal={Acta numerica},
  volume={14},
  pages={139--232},
  year={2005},
  publisher={Cambridge University Press}
}

@book{zeidler1993nonlinear,
  title={Nonlinear functional analysis and its applications: Fixed-point theorems/transl. by Peter R. Wadsack},
  author={Zeidler, Eberhard and Wadsack, Peter R},
  year={1993},
  publisher={Springer-Verlag}
}

@book{richardson1922weather,
  title={Weather prediction by numerical process},
  author={Richardson, Lewis F},
  year={1922},
  publisher={Franklin Classics}
}

@article{diperna1987oscillations,
  title={Oscillations and concentrations in weak solutions of the incompressible fluid equations},
  author={DiPerna, Ronald J and Majda, Andrew J},
  journal={Communications in mathematical physics},
  volume={108},
  number={4},
  pages={667--689},
  year={1987},
  publisher={Springer}
}

@article{alberti2014exponential,
  title={Exponential self-similar mixing and loss of regularity for continuity equations},
  author={Alberti, Giovanni and Crippa, Gianluca and Mazzucato, Anna L},
  journal={Comptes Rendus. Math{\'e}matique},
  volume={352},
  number={11},
  pages={901--906},
  year={2014}
}

@article{alberti2018loss,
  title={Loss of regularity for the continuity equation with non-Lipschitz velocity field},
  author={Alberti, Giovanni and Crippa, Gianluca and Mazzucato, Anna L},
  journal={arXiv preprint arXiv:1802.02081},
  year={2018}
}
\end{document}